\documentclass[11pt,a4paper]{article}

\usepackage{amsmath}
\usepackage{amsthm}
\usepackage{amsfonts}
\usepackage{amssymb}
\usepackage{mathtools}
\usepackage{dsfont}

\usepackage[
    hmargin=2.5cm,
    vmargin=3cm
]{geometry}

\usepackage{hyperref}

\theoremstyle{plain}
\newtheorem{theorem}{Theorem}[section]
\newtheorem{lemma}[theorem]{Lemma}

\theoremstyle{definition}
\newtheorem{definition}[theorem]{Definition}
\newtheorem{example}[theorem]{Example}

\theoremstyle{remark}
\newtheorem{remark}[theorem]{Remark}

\title{Functional limit theorems for perturbed random walks}

\author{
Andrey Pilipenko\thanks{
Section de Mathématiques,
Université  de Genève,
CP 64,
1211 Genève 4
Switzerland;
and
Institute of Mathematics, National Academy of Sciences of Ukraine,
Tereshchenkivska Str.\ 3, 01601 Kyiv, Ukraine.
}
}

\date{}

\begin{document}

\maketitle

\begin{abstract}
We consider scaling limits of integer-valued random walks whose transition
probabilities are translation-invariant everywhere except on a set that we
will call a membrane or an interface. If the jumps outside the membrane are
i.i.d.\ mean-zero random variables with finite variance, then possible
examples of the corresponding limit processes include reflected Brownian
motion, skew Brownian motion, and Brownian motion with a jump-type exit from
zero. We also discuss the question of what the possible limits are if the
jumps outside the membrane belong to the domain of attraction of a stable
law.
\end{abstract}

\section{Introduction}

Consider a random walk $S_\xi(n):=\xi_1+\dots+\xi_n$, where $(\xi_n)$ are independent identically distributed random variables. If  $(\xi_n)$ have zero mean and   finite variance  $\sigma^2>0$, then certain scaling limits  of the random walk converge to Brownian motion.  In this article, we consider a Markov chain on ${\mathbb Z}$ that acts as a perturbation of an underlying random walk $S_\xi$ on some set $A\subset {\mathbb Z}$, which we refer to as a \textit{membrane} or an $\textit{interface}$. Specifically, we assume that the transition probabilities of the perturbed walk and the original walk may differ only within $A$. For instance, inside the membrane, the walk might be more biased toward moving to the right than to the left. This model serves as a macroscopic representation of a {``breathing material,''} which allows diffusion to pass more easily in one direction than the other. The central problem addressed in this review is the identification of the limits for such perturbed random walks using the same normalization as   for standard random walks.

The stated problem can be generalized in several ways. For example, if the jumps to the left and right of the membrane follow different distributions, we arrive at the problem of investigating  {perturbed oscillating random walks}. Another generalization involves  {random walks on star graphs}, where the oscillating random walk can be viewed as a special case.

It is worth noting that if the membrane is bounded, its scalings contract to the origin. Consequently, it is natural to expect that the limiting process (if it exists) will be a Markov process that behaves like Brownian motion away from zero.  For a comprehensive analytical and stochastic description of all such processes, see \cite{Bobrowski-Pilipenko_2025_Walsh}. In particular, if the jumps within the membrane have finite expectation, ensuring the limiting process is continuous, one naturally expects it to be a  {skew Brownian motion} (on the real line) or a  {Walsh process} (on a graph). The case of heavy tails outside and/or inside the membrane is significantly more challenging.

The purpose of this article is to discuss methods for proving the existence of the corresponding limits, as well as ways to describe these limits. We will mainly focus on discussing ideas, intuition, the main steps of proofs, difficulties that may be encountered, and methods for overcoming them. For detailed proofs, see the recent monograph \cite{IPMS_book}. %While preparing this article, we identified several alternative investigative approaches, critical nuances, and potential generalizations to which we wish to draw the reader's attention.

%\subsection*{Organization of the Article}

The article is organized as follows. In Section \ref{sec:2formulation}, we provide a mathematical formulation of the problem and several illustrative examples leading to reflected Brownian motion and skew Brownian motion. We also provide various descriptions of these processes, as they shed light on potential research methodologies.

Section \ref{sec:firststeps} is devoted to the first steps of analyzing the limit behavior of perturbed random walks. We prove several general theorems under minimal assumptions on the transition probabilities of the perturbed walk. For example, we prove that the number of visits to the membrane during $n$ steps is $o(n)$. This allows us to make an informal guess for the stochastic representation of the limit process, expressed as a stochastic differential equation with local time, see \eqref{eq:guessLimit}.
  This provides further support for our earlier conjecture that the limit of a perturbed random walk on the line is a skew Brownian motion, provided the jumps outside and inside the membrane have finite expectation. Although this answer was easy to guess from the outset (and was originally suggested as a ``simple problem'' to my PhD student, Yuri Prykhodko), obtaining a formula for the parameters of skew Brownian motion in a general setting took nearly a decade.

If the jumps outside and/or inside the membrane belong to the domain of attraction of a stable law, then the processes $U$ and/or $V$ in equation \eqref{eq:guessLimit} involving local time are stable processes, which is a highly non-standard situation.
We leave it to the reader until \S \ref{sec:skew Levy}  to think about how such processes can be described, not to mention the difficulties associated with proving convergence.

In Section \ref{sec:reflection}, we study random walks with a reflective membrane and prove the invariance principle. In some cases, the result leads to Skorokhod's reflection problem and is a simple consequence of the continuous mapping theorem. However, in cases where jumps out of the membrane are heavy-tailed, the result of the scaled limit will differ, leading to a Brownian motion on a half-line with jump-type reflection at $0$, which corresponds to non-local boundary conditions for the corresponding semigroup. The constructions of Section  \ref{sec:reflection} are used later in  Section \ref{sec:spider}, where we consider perturbed random walks on a line or on a star-graph, whose jumps have light tails. The main result is that their scaling limit is a skew Brownian motion or a Walsh process, respectively. The proof of the corresponding result provided here is new; it is based on a recent description of the Walsh process \cite{Bayraktar} and potentially seems to allow for many generalizations.

Section \ref{sec:skew Levy} is devoted to the case where both jumps outside the membrane and jumps from the membrane are heavy-tailed. In this case, the limiting process may be a  {skew stable L\'evy process}, introduced   recently in \cite{IksanovPilipenko2021skewLevy}.

At the end of  sections, we discuss possible generalizations and other approaches to investigation.

\section{Formulation of the problem, warm up examples, and informal reflections}\label{sec:2formulation}

  %  For $t\in(n,n+1)$ we will  extend $S_\xi$ by linearity  $S_\xi(t):=S_\xi(n)+(t-n)(S_\xi(n+1)-S_\xi(n)) $   or we will set   $S_\xi(t):=S_\xi(n).$ In  cases there will be no difference in results

  We will investigate weak convergence of random processes  in the Skorokhod space of 
  %continuous functions $C([0,\infty))$ equipped with the  topology of  uniform convergence on compact sets, or in the space of 
  c\`adl\`ag functions ${\mathcal D}\coloneqq D([0,\infty))$ equipped with  $J_1$-topology\footnote{Actually, A.Skorokhod introduced four topologies, $J_1$ is a `usual' one. }, see for example \cite{Billingsley, Ethier+Kurtz:1986, Whitt} for the definition. Convergence in distribution will be denoted by $\Rightarrow.$ All   sequences $(X(n))_{n\geq 0}$ we will be extended to non-negative real numbers by $X(t)\coloneqq X([t]), t\geq 0,$ or by linearity.

  Consider a random walk (RW) $S_\xi(n)\coloneqq S_\xi(0)+\sum_{k=1}^n\xi_k, n\geq 1,$ where  $(\xi_n)\overset{d}{=} \xi$ are independent identically distributed random variables that are independent of $S_\xi(0)$. 
  Since we expect   existence of a nontrivial scaling limit  for perturbation of   $S_\xi$, it is natural to expect that scaling limits of $S_\xi$ exists itself, i.e.,   that there is a sequence $(a_n)$ such that we have convergence to some process $U,$
  \begin{equation}
      \label{eq:conv_Skor1}
      \Big(\frac{S_\xi(nt)}{a_n}\Big)_{t\geq 0}\Rightarrow \Big(U(t)\Big)_{t\geq 0}, \quad n\to\infty.
    \end{equation}
    For example, if ${\mathop{\rm E}} \xi=0$ and ${\mathop{\rm Var}} \xi=\sigma^2\in(0,\infty),$ then by Donsker's theorem we have convergence of processes
  \[
  \Big(\frac{S_\xi(nt)}{\sigma \sqrt{n}}\Big)_{t\geq 0}\Rightarrow \Big(B(t)\Big)_{t\geq 0}, \quad n\to\infty,
  \]
where $B$ is a standard Brownian motion.
  
Let $A\subset \mathbb{Z}$ be a subset of integers, $\xi$ and $\eta_x, x\in A,$ be integer-valued random variables. 
A Markov chain $(X(n))$ with transition probabilities
\begin{equation}\label{eq:transitions_PRW}
    p_{x,y}=\begin{cases}
    {\mathop{\rm P}}(\xi=y-x), & x\notin A, y\in{\mathbb Z};\\
    {\mathop{\rm P}}(\eta_x=y-x), & x\in A, y\in{\mathbb Z},
\end{cases}
\end{equation}
is called a perturbation of a random walk $S_\xi $ at $A$ or a RW with a membrane $A.$ Formula \eqref{eq:transitions_PRW} means that jumps outside of the set $A$ have distribution $\xi$ and jumps from a point $  x\in A,$ follow the distribution $\eta_x.$  To avoid certain trivial cases we will always assume that   

\textit{all states in ${\mathbb Z}\setminus A$   communicate and the random walk cannot stay in the membrane forever  with probability 1.}

Mostly, we will consider the case when

\textit{  $A$ is a finite set, say $A=\{-m,\dots,m\}$, or the case $A={\mathbb Z}\setminus {\mathbb N}.$ In the first case we  say that $A$ is a semipermeable membrane. In the second case we will assume that $x+\eta_x>0$ and call $A$ a reflective membrane.}

A stochastic representation of $X$ can be constructed as follows. Let
$(\xi_n), (\eta_{n,x})_{n\geq 1}, x\in A,$ be independent sequences of i.i.d.  integer-valued random  variables that are independent of $X(0)$ and  such that $\xi_n\overset{d}{=}\xi$ and $\eta_{n,x}\overset{d}{=}\eta_x,x\in A.$
  Define a sequence of random variables recurrently
  \[
  X(n+1)=X(n)+ \xi_{n+1}{\mathds{1}}_{X(n)\notin A} + \sum_{x\in A} \eta_{n+1,x}{\mathds{1}}_{ X(n) =x}, \ n\geq 0,
  \]
  or
\begin{equation}
    \label{eq:X_repr}
    X(n)\coloneqq X(0)+\sum_{k=0}^{n-1}\left(\xi_{k+1}{\mathds{1}}_{X(n)\notin A} + \sum_{x\in A} \eta_{k+1,x}{\mathds{1}}_{ X(k)=x}\right), \quad n\geq 1.
\end{equation}

 The main  question of the paper is the following.
 
 {\it 
 Does a scaling limit exist for $\Big(\frac{X(nt)}{a_n}\Big)_{t\geq 0}$ under the same normalization as in \eqref{eq:conv_Skor1}, and if so, what is its form?}

Consider two warm up examples.
\begin{example}\label{ex:1_1}
    Assume that $m=0, \quad A=\{0\}, \quad {\mathop{\rm P}}(\xi=\pm1)=1/2, \quad {\mathop{\rm P}}(\eta_0=1)=1,$ and $X(0)\geq 0.$ Then $X$ is a simple random walk with reflection at 0, that is, its transition probabilities are $p_{i,i\pm1}=1/2, i\neq 0$ and $p_{0,1}=1.$

    It is easy to see that if $X(0)\overset{d}{=}|S_\xi(0)|,$ then
    \[
    (X(n))\overset{d}{=}(|S_\xi(n)|).
    \]
    Hence by continuous mapping theorem the sequence   $\Big(\frac{X(nt)}{\sqrt{n}}\Big)_{t\geq 0}$ converges in distribution to reflected Brownian motion $(|B(t)|)_{t\geq 0} $ started at $0.$ 
\end{example}
\begin{example}\label{ex:1_2}
    Assume that similarly to the previous example $ m=0, {\mathop{\rm P}}(\xi=\pm1)=1/2,$ but  ${\mathop{\rm P}}(\eta_0=1)=p $ and ${\mathop{\rm P}}(\eta_0=-1)=q=(1-p), $ where $p\in[0,1].$

Using André's reflection principle it can be seen that $n$-step transition probabilities  for $X$ are
\begin{equation}
    \label{eq:trans_skewRW}
    {\mathop{\rm P}}(X(n)=y | X(0)=x)=\varphi^{(n)}(y-x)+ \gamma \mathop{\rm sgn}(y-x) \varphi^{(n)}(|y|+|x|), \quad x,y\in{\mathbb Z},
\end{equation}
where $\gamma\coloneqq p-q\in[-1,1]$ and  $\varphi^{(n)}(y-x)\coloneqq  {\mathop{\rm P}}(S_\xi(n)=y | S_\xi(0)=x)$ is the transition probabilities for unperturbed random walk.

The modulus of continuity of $X$ is dominated by the modulus of continuity of $S_\xi$ in the following sense:
\[
\forall k,l\geq 1\quad \max_{k\leq i, j\leq l} |X(j)-X(i)| \leq 1+ 2 \max_{k\leq i, j\leq l} |S_\xi(j)-S_\xi(i)|.
\]
Hence, the sequence $\{\Big(\frac{X(nt)}{\sqrt{n}}\Big)_{t\geq 0}, n\in{\mathbb N}\}$ is weakly relatively compact in ${\mathcal D}$ and any of its limit point is a continuous stochastic process. 

As is well known from the de  Moivre-Laplace   local limit theorem, the scaling limit of transition probabilities of $S_\xi$ is  a Gaussian density 
\begin{equation}
    \label{eq:transitionSkewBM}
\lim_{n\to\infty}\sqrt{n}\varphi^{([nt])}([\sqrt{n}z])=\varphi_t(z)=:\frac{1}{\sqrt{2\pi t}}{\rm e}^{-z^2/2t}, \quad t>0, z\in{\mathbb R}.
\end{equation}

Hence, after some (maybe not so trivial) calculations, we see that f.d.d. of  $\Big(\frac{X(nt)}{\sqrt{n}}\Big)_{t\geq 0}$ converge in distribution to f.d.d. of a skew Brownian motion with a parameter $\gamma$, which is a Markov process with transition probability density function being equal to 
\begin{equation}\label{eq:transitionsof skew BM}
     p_t(x,y)=\varphi_t(y-x)+ \gamma \mathop{\rm sgn}(y-x) \varphi_t(|y|+|x|), \quad x,y\in{\mathbb R}.
\end{equation}

Weak relative compactness and convergence of f.d.d. gives us convergence in distribution of $\Big(\frac{X(nt)}{\sqrt{n}}\Big)_{t\geq 0}$  in ${\mathcal D}$ to the skew Brownian motion that starts from 0.
\end{example}

Let us reflect on the previous examples. It is natural to expect that a scaling limit of  $\Big(\frac{X(nt)}{a_n}\Big)_{t\geq 0}$    (if it exists) is a   Markov process that behaves as the process $U$ from \eqref{eq:conv_Skor1} up to the moment of hitting 0, and in particular as a Brownian if the $\xi$ is a centered random variable with finite variance.   Before examining convergence of $\Big(\frac{X(nt)}{a_n}\Big)_{t\geq 0}$, let us answer the following questions.
\begin{itemize}
    \item 
What are non-degenerate examples of processes $U$ from \eqref{eq:conv_Skor1}?
\item Does $U$ visit zero with positive probability?
\item How can we describe all strong Markov processes that behave like $U$ until hitting  0? What processes may appear in a limit? What methods of proof convergence in distribution are helpful?
\end{itemize}  
The answer on the first question is  well known, $U $ must be  strictly $\alpha$-stable processes with some $\alpha\in (0,2]$. If $\alpha\in(1,2],$ then this processes hits 0 with probability 1 and the point 0 is polar if $\alpha\in(0,1),$ see \cite[Example 43.22]{Sato}.
Further we will consider only the case $\alpha\in (1,2]$. It is well known that in this case ${\mathop{\rm E}} \xi=0$, see the classical book \cite{GnedenkoKolmogorov}. Moreover,   $(a_n)$ is regularly varying sequence at infinity of index $1/\alpha$.

The answer on the third question is complicated. In this regard, let us first ask ourselves the question: what are the methods for defining Markov processes, and what methods can be used to prove the convergence of Markov processes? Naturally, this topic is too broad, and we are by no means going to provide exhaustive answers, e.g.,  \cite{Ethier+Kurtz:1986, Dynkin63, ItoMcKean65, ItoSynthesis, Billingsley, gikhmanSkorokhod_vol2, Whitt}.
    We just sketch some ideas and methods that may be useful. At the end of this section, we will provide a  stochastic and analytic description of the skew Brownian motion and the Walsh process, which will be limit processes in most interesting  situations.
 
\textit{Classical approach.} The classical way to determine a stochastic process is via its finite-dimen\-sio\-nal distributions. For homogeneous Markov processes this reduces to knowing the initial distribution and transition probabilities. For example, for the skew Brownian motion transition probabilities are given by formula \eqref{eq:trans_skewRW}, and reflected BM is a particular case of the skew Brownian motion  with $p=1.$  The classical  method  to prove convergence in distribution is through ``tightness and convergence of finite-dimensional distributions'', see e.g. \cite{Billingsley}. In our case a  metric space is the Skorokhod space of c\`adl\`ag functions ${\mathcal D}$ with $J_1$ topology. Certainly, we suggest that we have already known Donsker's theorem or other nice properties of unperturbed  random walks $S_\xi, S_{\eta_x}$ and, usually, a verification of tightness would not be  a big problem, cf. Example \ref{ex:1_2}. However, this traditional approach only works easily in the simplest cases.      If, in Example \ref{ex:1_2}, we use a different distribution for $\xi$ or $\eta$, or a membrane consisting of several points, it will be difficult to find satisfactory explicit formulas for the transition probabilities. Certainly, they can be obtained from some renewal-type equations or like  series  over excursions between consecutive visits the membrane. This way of proof is possible  but nontrivial at all, see \cite{NgoPeigne, Ngo+Peigne:2021}. We should remark that even the case considered in Example \ref{ex:1_2} requires some arguments besides the central limit theorem or de Moivre-Laplace theorem (we omitted a proof of this part there). 

\textit{Analytic methods: Semigroups, generators, resolvents.}  Assume that we  already have a  (time-homogeneous) Markov process $Y$ on the real line with transition probabilities $P(t, x, A), t\geq 0, x\in{\mathbb R}, A\in {\mathcal B}({\mathbb R}).$ Then we can define a semigroup of operators associated with $Y$ that is given by
\[
T_tf(x)\coloneqq {\mathop{\rm E}}_x f(Y(t)), t\geq 0, x\in {\mathbb R},
\]
 acting on a suitable  Banach space of functions $f.$ Further we will always assume that $T_t, t\geq 0, $ is a Feller semigroup, i.e., $T_t$ is a strongly continuous semigroup in the space $C_0({\mathbb R})$ of continuous functions with zero limits at the infinity or in the space $C([-\infty, \infty])$ of continuous functions that have finite limits at infinities. In any case, it is well known that Feller processes possess the strong Markov property  and have  c\`adl\`ag modifications. 
 Once we have formed a semi-group, we can employ powerful analytical methods. Recall at first, that a semigroup is uniquely determined by its generator $A$,
 \[
 Af\coloneqq \lim_{t\to0+}\frac{T_tf-f}{t}
 \]
where the domain $D(A)$ consists of functions $f$ such that the limit exists (in the corresponding Banach space). It is also well known that the resolvent of the semigroup $R_\lambda\coloneqq (\lambda-A)^{-1}, \lambda>0$  also determines the semigroup uniquely and satisfies the equation
\[
R_\lambda f(x)=\int_0^\infty {\mathop{\rm E}}_x f(Y(t)) \mathrm{d} t.
\]

%There is a one-to-one correspondence between Feller semigroups, their generators, and resolvents. The answer on the question if a certain operator is a generator of a Feller semigroup is given by the Hille-Yosida theorem.
Important   fact is that convergence in some sense of resolvents, semigroups, generators, and convergence in distribution of Feller processes are equivalent \cite[ Theorem 2.11 on p. 172]{Ethier+Kurtz:1986}, \cite[Theorem 17.25]{Kallenberg}, and \cite[Theorem 17.27] {Kallenberg} for approximation of Feller processes by Markov chains. Section \ref{sec:skew Levy} will demonstrate the effectiveness of the resolvent approach when both random variables $\xi$ and $\eta$ from the definition of the perturbed RW exhibit heavy-tailed distributions.

\textit{Stochastic methods. } 
The analytic description of via semigroups, resolvents, etc. provides us with important numerical characteristics of stochastic processes and efficient way to prove convergence. On the other hand, describing a stochastic process  using stochastic differential equation, martingale problem, Itô's excursion theory, etc., 
gives us understanding the dynamics of the process. The unperturbed RW and/or Brownian motion may be   ``building blocks'' for a construction  of the perturbed process.
If the perturbed RW and its limit   are continuous functions in some sense of this blocks, then the proof would follows from the combination of   Donsker's type theorem together with  continuous mapping theorem. We will see that this approach is helpful for reflected processes (see \S \ref{sec:reflection}), and  what is even more surprising, for permeable membrane, where the limit process is a skew Brownian motion. %, despite the fact that local time is not a continuous function of trajectory.  

Each  of described  method has its own advantages and it is difficult to say apriori which method of investigation is  more effective.

 We finish this section by analytic and stochastic description of   a skew Brownian motion and its natural generalization the Walsh process, see  \cite{Walsh:1978, BarlowPitmanYor, IPMS_book, Bayraktar, Lejay:skew, Bobrowski-Pilipenko_2025_Walsh, PavlyukevichPilipenko2023} for references and further properties.
 
 \begin{example}
     Let $X$ be a skew Brownian motion with parameter $\gamma\in[-1,1]$, i.e., a continuous Markov process with transition probability density function given in \eqref{eq:transitionsof skew BM}. %(see  the various descriptions in the review article \cite{Lejay:skew}). %It is well known that  $X$ is a Feller process. 
     
     \textit{Analytic description.} The domain of the generator $A: Dom(A)\subset C([-\infty,\infty])\to C([-\infty,\infty])$ consists of all functions $f\in C([-\infty,\infty])$ such that 
     
     (a) the second derivative $ f''(x) $ exists for all $ x\neq 0$ and can be extended by continuity to all $x\in[-\infty, \infty];$
     
     (b) the left and right derivatives  $f'(0\pm)$  at 0 exist  and satisfy the conjugation condition at 0  
     \begin{equation*}
(1-\gamma) f'(0-)=(1+\gamma) f'(0+).
     \end{equation*}
  If $f\in Dom(A),$ then 
  $A f(x)=      \frac12 f''(x), x\in{\mathbb R}$, where $f''(0)\coloneqq 
      f''(0+)=f''(0-).
  $

The resolvent is of the form
\[
R^{{\rm skew}}_\lambda f(x) =\int_{\mathbb R} r_\lambda^{{\rm skew}}(x,y) f(y) {\rm d}y,
\]
where
\[
r_\lambda^{{\rm skew}}(x,y)=\frac{1}{\sqrt{2\lambda}}({\rm e}^{-\sqrt{2\lambda}|x-y|}+
 \gamma \mathop{\rm sgn} (y) {\rm e}^{-\sqrt{2\lambda}(|x|+|y|)}),\quad x,y\in{\mathbb R}.
\]

  \textit{Stochastic description.} It\^o and McKean obtained a skew Brownian motion from a reflected Brownian motion (both started from 0, for simplicity) via the following procedure, see    \cite[Problem 1 of Section 4.2]{ItoMcKean65}. They enumerated excursions of the reflected Brownian motion in any measurable way and changed the sign of each excursion independently with probability $1-p$  and  keeping it positive with probability $p$.  Harrison and Shepp \cite{Harrison+Shepp} proved that for any $\gamma\in[-1,1]$ there is a unique strong solution to the SDE
  \[
  \mathrm{d} X(t)=\mathrm{d} W(t) +\gamma \mathrm{d} L_0^X(t),
  \]
where $W$ is a Brownian motion and $ L_0^X $ is a symmetric local time of $X$ at 0, and the solution is the skew  Brownian motion with parameter $\gamma.$  We postpone another stochastic representation until the next example, where more general process is considered.
 \end{example}

\begin{example}[The Walsh process]

The Walsh process is a natural generalization of the skew Brownian motion. This is a continuous, homogeneous  Markov process whose state space is a star graph consisting   $d$ rays with a common endpoint. Similarly to the It\^o -- McKean construction of the skew Brownian motion,  Walsh  \cite{Walsh:1978} proposed distributing the excursions of reflected Brownian motion along these rays so that the excursion would fall on the $ k$th ray with probability $p_k,$  where $p_1,\ldots,p_d$ are fixed non-negative numbers that are summed up to 1. 
Without loss of generality we will assume that the state space $E_d$ for the Walsh Brownian motion is the union of nonnegative coordinate semi-axes in ${\mathbb R}^d$, that is,
\[
E_d\coloneqq\{x\in{\mathbb R}^d\ :\  x_i\geq 0\text{ and } x_ix_j=0,\ i\neq j,\  i,j=1,\dots, d\}.
\]
   \textit{Analytic description.} Transition probability density  of the Walsh process is given by
\begin{align*}
p_t(x,y) 
=
\begin{cases}
\varphi_t (x_i-y_j)
 + (2p_i-1)\varphi_t (x_i+y_j),& \
i=j \\
2p_j\varphi_t (x_i+y_j),& \
i\neq j
\end{cases}
\end{align*}
  where $x=(0,\ldots,0,x_i,0,\ldots,0), y=(0,\ldots,0,y_j,0,\ldots,0)\in E$, $x_i\geq 0, y_j \geq 0, $ and $\varphi_t(z)=(2\pi t)^{-1/2} {\rm e}^{-z^2/2t},\quad  z\in{\mathbb R}, t>0$ is the Gaussian density.

It follows from Walsh's definition or from the form of transition probabilities that 
a Walsh process $(X_1,\dots,X_d)$ with parameters $p_1,\dots,p_d$ satisfies the following: 
for every $K\subset \{1,\dots, d\}$ the one-dimensional process $\sum_{i\in K}X_i(t)-\sum_{j\notin K}X_j(t)$ 
is the skew Brownian motion with parameter $\gamma\coloneqq \sum_{i\in K}p_i -\sum_{j\notin K}p_j.$ 
In particular, $\sum_{j=1}^d X_j(t)$ is a reflected Brownian motion.

  % It follows from Walsh's definition or from the form of transition probabilities that if $(X_1, \dots, X_d)$ is a Walsh process with parameters $p_1,\dots,p_d,$ then for every $K\subset \{1,\dots, d\}$ the one-dimensional process  $\sum_{i\in K}X_i(t)-\sum_{j\notin K}X_j(t)$ is the skew Brownian motion with parameter $\gamma\coloneqq \sum_{i\in K}p_i -\sum_{j\notin K}p_j.$ In particular, $\sum_{j=1}^d X_j(t)$ is a reflected Brownian motion.

We identify the space of continuous functions $f$ on compactified graph with a subspace of  $(C([0,\infty]))^d$ of functions $(f_1,\dots,f_d)$ such that
$f_i(x_i)= f(0,\dots,0,x_i,0,\dots,0).$ Note that $f_i(0)=f_j(0), 1\leq i,j\leq d.$

The domain of generator consists of all $(f_1,\dots,f_d)\in   (C^2( [0,\infty]))^d$  such that $f_i''(0+)=f_j''(0+), 1\leq i,j\leq d,$ and $\sum_{i=1}^d p_if'_i(0+)=0.$ The generator is equal to a half of the second derivative along the coordinate axes. See comprehensive description of Walsh process  and its generalizations in  \cite{Bobrowski-Pilipenko_2025_Walsh}.

   \textit{Stochastic description.}
 The following martingale problem for the Walsh process was introduced by Barlow, Pitman, and Yor in \cite{BarlowPitmanYor}. Here, we adopt the formulation from \cite{PavlyukevichPilipenko2022}.
\begin{theorem} 
\label{thm:Walsh} 
Let $X= (X_1,\dots,X_d)$ and $L$ be two continuous processes, the first with values in 
${\mathbb R}^d$, the second with values in ${\mathbb R}$, and  let  $\mathcal{F}^X$ be the filtration generated by $X$. Then $X$ is a Walsh's process with parameters $p_1,\dots,p_d$, and $L$ is its local time at   $0$ defined by 
\[
L(t)\coloneqq \lim_{\varepsilon\to0+}\tfrac{1}{2\varepsilon}\int_0^t{\mathds{1}}_{\{0<|X(s)|\leq \varepsilon\}}\mathrm{d} s, \quad t \ge 0,
\]
if and only if
\begin{itemize}
\item [(1) ] $X_i(t)\geq 0, 1\leq i\leq d $, and $X_i(t)X_j(t)=0, 1\leq i,j\leq d, i \neq  j$ for $t\geq 0$;
\item [(2) ] $L$ is a.s.\ nondecreasing $\mathcal{F}^X$-adapted process with $L(0)=0$ and
\[ 
\int_{[0,\,\infty)}{\mathds{1}}_{\{X(s)\neq 0 \}}\, {\rm d} L(s)=0\quad\text{a.s.};
\]
\item [(3) ] the processes $M_i, 1\leq i\leq d $ defined by
\begin{equation}
\label{e:Mnu}
M_i(t)\coloneqq X_i(t)-p_i L(t),\quad t\geq 0
\end{equation}
are continuous square integrable martingales with respect to $\mathcal{F}^X$ with predictable quadratic variations
\begin{equation}
\label{e:brM}
\langle M_i\rangle (t)=\int_0^t {\mathds{1}}_{\{X_i(s)>0\}} \mathrm{d} s, \qquad t \ge 0;
\end{equation}
\item [(4) ] $\int_0^\infty {\mathds{1}}_{\{X(s)=0\}} \mathrm{d} s=0$ {\rm a.s.}
\end{itemize}
\end{theorem}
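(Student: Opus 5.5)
The plan is to prove the two implications separately, using the excursion description of the Walsh process and the fact (recalled above) that $R(t)\coloneqq\sum_{j=1}^d X_j(t)$ is a reflected Brownian motion.

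\emph{Necessity.} Let $X$ be a Walsh process with parameters $p_1,\dots,p_d$. Property (1) is simply the statement that $X$ lives on $E_d$. Since $R=|X|$ is a reflected Brownian motion, Tanaka's formula gives $R(t)=\beta(t)+L(t)$. Here $\beta$ is a Brownian motion and $L$ is the symmetric local time of $R$ at $0$, which equals the limit in the statement. This yields (2) and (4); the latter holds because the zero set of a reflected Brownian motion has Lebesgue measure zero.

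For (3), fix $i$ and take $K=\{i\}$. Then $Y\coloneqq X_i-\sum_{j\neq i}X_j$ is a skew Brownian motion with parameter $\gamma=2p_i-1$. The Harrison--Shepp representation gives $Y=W+\gamma L$, with the same local time $L$ because $|Y|=R$. Adding this to $R=\beta+L$ and using $X_i=(R+Y)/2$, we get
\[
X_i=\tfrac12(\beta+W)+p_iL .
\]
Hence $M_i=\tfrac12(\beta+W)$ is a continuous martingale. To compute its bracket, apply Tanaka to $X_i=Y^+$. This gives $\mathrm{d}M_i=\mathds{1}_{\{Y>0\}}\mathrm{d}W$ up to local-time terms, so $\langle M_i\rangle(t)=\int_0^t\mathds{1}_{\{X_i(s)>0\}}\mathrm{d} s$, using (4). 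One point needs care: all of this must hold with respect to $\mathcal F^X$. This follows because $W$, $\beta$ and $L$ are functionals of $X$.

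\emph{Sufficiency.} Assume (1)--(4). Summing (3) over $i$ shows that $R=\sum_i X_i$ satisfies $R=N+L$, where $N=\sum_i M_i$. The processes $M_i$ have disjoint supports: $M_i$ is constant on intervals where $X_i=0$, so $\langle M_i,M_j\rangle=0$ for $i\ne j$. This follows by polarization, since $\langle M_i+M_j\rangle$ can be computed from the Skorokhod-type decomposition. Together with (4), this gives $\langle N\rangle(t)=t$, so $N$ is a Brownian motion by Lévy's characterization. By (2), $L$ is the Skorokhod regulator, so $R$ is a reflected Brownian motion with local time $L$.

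It remains to identify how excursions are distributed among the rays. For a bounded $\mathcal F^X$-predictable $H$ and each $i$, optional stopping in (3) gives
\[
{\mathop{\rm E}}\int H\,\mathrm{d}X_i=p_i\,{\mathop{\rm E}}\int H\,\mathrm{d}L .
\]
This forces the compensator of the point process of excursions of $R$ marked by their ray to be $p_i\,\mathrm{d}L\otimes n(\mathrm{d}e)$, where $n$ is Itô's excursion measure. So the marks are i.i.d.\ with law $(p_i)$, independent of the excursion shapes, which is exactly Walsh's construction. Alternatively, one can show directly that for $f$ in the domain of the generator, $f(X(t))-\int_0^t\frac12 f''(X(s))\,\mathrm{d} s$ is a martingale. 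This uses Itô's formula on each ray, and the local-time contribution $\sum_i p_if_i'(0+)\,\mathrm{d} L$ vanishes by the gluing condition. Uniqueness of the martingale problem for the Feller generator, as in Ethier--Kurtz, then identifies the law of $X$.

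\emph{Main obstacle.} I expect the hardest step to be this last one in the sufficiency direction: applying Itô's formula to $f_i(X_i)$ when $X_i$ is only a semimartingale that is zero on the complement of its excursions. Concretely, I would first check that $\int\mathds{1}_{\{X_i=0\}}\mathrm{d}X_i=p_iL$. After that, I would write
\[
f(X)=\sum_i\bigl(f_i(X_i)-f_i(0)\bigr)+f(0)
\]
and apply the semimartingale Itô formula to each $f_i$ extended as a $C^2$ function to $\mathbb R$.
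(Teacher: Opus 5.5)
The paper does not prove Theorem~\ref{thm:Walsh}. It quotes the result from Barlow--Pitman--Yor, in the formulation of Pavlyukevich--Pilipenko, so your proposal has to stand on its own. The overall structure is sound and the statement follows from your outline. However, two steps are not justified as written, and the step you call the main obstacle is actually the easy one.

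\emph{Necessity.} Your algebra is right: $X_i=\tfrac12(R+Y)=\tfrac12(\beta+W)+p_iL$. Since $\{Y>0\}=\{X_i>0\}$, Tanaka's formula for $Y^+$ identifies the martingale part as $\int\mathds{1}_{\{X_i>0\}}\,\mathrm{d}W$.

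The weak point is the filtration. Saying that $\beta$, $W$, $L$ are functionals of $X$ gives only adaptedness. It does not give the martingale property with respect to the larger filtration $\mathcal F^X$. What you need is that $R$ and $Y$ are Markov with respect to $\mathcal F^X$, with the reflected and skew Brownian transition functions respectively. This follows from the Walsh density. For $x$ on ray $k$, summing over the rays gives $\sum_j p_t(x,ye_j)=\varphi_t(x_k-y)+\varphi_t(x_k+y)$. The analogous sums give the skew Brownian density with $\gamma=2p_i-1$ started from $Y(0)$. Both depend on $x$ only through $R(0)$, respectively $Y(0)$. With this in hand, the Tanaka and Harrison--Shepp decompositions hold in $\mathcal F^X$.

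\emph{Sufficiency via excursions.} The sentence ``this forces the compensator of the marked excursion process to be $p_i\,\mathrm{d}L\otimes n(\mathrm{d}e)$'' is asserted, not proved, and it carries the whole content of this direction. The identity ${\mathop{\rm E}}\int H\,\mathrm{d}X_i=p_i{\mathop{\rm E}}\int H\,\mathrm{d}L$ does not see the marks directly, because $X_i$ returns to $0$ at the end of every excursion. What must be shown is that the ray of an excursion is independent of the past and of the excursion's shape.

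One way to do this:
\begin{enumerate}
\item Take a stopping time $\sigma$ with $X(\sigma)=0$ and set $T_\varepsilon=\inf\{t>\sigma: R(t)=\varepsilon\}$. Optional stopping for $M_i$ and for $N=R-L$ between $\sigma$ and $T_\varepsilon$ gives ${\mathop{\rm P}}(X_i(T_\varepsilon)>0\mid\mathcal F^X_\sigma)=p_i$, because ${\mathop{\rm E}}[L(T_\varepsilon)-L(\sigma)\mid\mathcal F^X_\sigma]=\varepsilon$.
\item Use the strong Markov property of the $\mathcal F^X$-Brownian motion $N$ at $T_\varepsilon$.
\item Let $\varepsilon\downarrow0$.
\end{enumerate}
As written, this route has a gap.

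\emph{Sufficiency via the martingale problem.} This route is complete and is the one to use. There is no real obstacle: by (3), each $X_i=M_i+p_iL$ is already a continuous semimartingale. Extend $f_i$ to a $C^2$ function on $\mathbb R$ and apply It\^o's formula.
\begin{itemize}
\item Since $L$ charges only $\{X=0\}$, $\int f_i'(X_i)\,\mathrm{d}L=f_i'(0+)L$.
\item $\tfrac12\int f_i''(X_i)\,\mathrm{d}\langle M_i\rangle=\tfrac12\int f_i''(X_i)\mathds{1}_{\{X_i>0\}}\mathrm{d}s$.
\item Summing $f(X)=f(0)+\sum_i(f_i(X_i)-f_i(0))$, the $L$-terms cancel because $\sum_ip_if_i'(0+)=0$.
\item Condition (4) replaces $\sum_i f_i''(X_i)\mathds{1}_{\{X_i>0\}}$ by $f''(X)$ for a.e.\ $s$.
\item Since $f_i'$ is bounded, the stochastic integrals are true $\mathcal F^X$-martingales.
\end{itemize}
Uniqueness for the martingale problem of the Feller generator on the compactified graph (Ethier--Kurtz, Ch.~4) then identifies the law of $X$.

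Your auxiliary identity $\int\mathds{1}_{\{X_i=0\}}\mathrm{d}X_i=p_iL$ is immediate: $\int\mathds{1}_{\{X_i=0\}}\mathrm{d}M_i$ has bracket $\int\mathds{1}_{\{X_i=0\}}\mathds{1}_{\{X_i>0\}}\mathrm{d}s=0$. This route also needs neither $\langle M_i,M_j\rangle=0$ nor L\'evy's theorem. If you keep the orthogonality claim, justify it by Kunita--Watanabe: split with $\mathds{1}_{\{X_i>0\}}$ and $\mathds{1}_{\{X_i=0\}}$ and use (1). ``Polarization via the Skorokhod decomposition'' does not compute $\langle M_i+M_j\rangle$.

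Finally, you still need to identify $L$ with the limit in the statement. Once $R=\sum_iX_i$ is a reflected Brownian motion with canonical decomposition $R=N+L$, Tanaka's formula together with (4) gives $L=\tfrac12L^0_R$, where $L^0_R$ is the right semimartingale local time. The occupation density formula then yields $\lim_{\varepsilon\to0+}\tfrac1{2\varepsilon}\int_0^t\mathds{1}_{\{0<R(s)\le\varepsilon\}}\mathrm{d}s=L(t)$.
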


 Another stochastic description, which will be used in this paper, is taken from \cite{Bayraktar}, see also \cite{Bobrowski-Pilipenko_2025_Walsh}.
   \begin{theorem} 
    \label{thm:baryaktar} Let $p_1,\dots,p_d$ be   positive numbers such that $\sum_{1\leq i\leq d} p_i=1$. Given
 a standard $d$-dimensional Wiener process  $ W=(W_1,\dots,W_d)$ with $W(0)\in E_d$, we define 
\[ L_i(t)\coloneqq -\min_{s\in[0,t]}(W_i(s)\wedge 0) \text{ and } W^{\text{refl}}_i(t)\coloneqq W_i(t)+L_i(t), \quad 1\leq i\leq d, t \ge 0.\]     
    Then there is a unique collection of continuous, non-negative and non-de\-creas\-ing processes $T_i, 1\leq i\leq d$ such that almost surely
    \begin{equation}
        \label{eq:balance_locTime_Walsh's process1}        \frac{L_i(T_i(t))}{p_i}=\frac{L_j(T_j(t))}{p_j},\qquad  1\leq i,j\leq d,\quad t\geq 0, 
    \end{equation}
and $\sum_{1\leq i,j\leq d}T_i(t)=t, t\geq 0$.
    Moreover, the process 
    \begin{equation}
                \label{eq:repr_Bayr}
        X\coloneqq ({W^{\text{refl}}_i\circ T_i} )_{1\leq i\leq d}
        \end{equation}
is then Walsh's process  with parameters $p_1,\dots,p_d.$ 
    \end{theorem}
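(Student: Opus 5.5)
The plan has two parts. First I construct the time changes $T_i$ and prove they are unique. Then I check that the process $X=(W_i^{\text{refl}}\circ T_i)_i$ satisfies the martingale characterization of Theorem \ref{thm:Walsh}, taking $L(t)\coloneqq L_i(T_i(t))/p_i$, which by \eqref{eq:balance_locTime_Walsh's process1} does not depend on $i$.

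\emph{Construction of the $T_i$.} For $\ell\ge 0$ I introduce the inverse local times $\tau_i(\ell)\coloneqq\inf\{s: L_i(s)>p_i\ell\}$. Each $\tau_i$ is right-continuous and strictly increasing, and it has a countable set of jumps. Each jump is the length of an excursion of $W_i^{\text{refl}}$ away from zero. I define the total clock $\Sigma(\ell)\coloneqq\sum_i\tau_i(\ell)$. This is strictly increasing, and $\Sigma(\ell)\to\infty$ as $\ell\to\infty$.

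For a given $t$, I take $\ell(t)\coloneqq\inf\{\ell:\Sigma(\ell)>t\}$. If $t$ lies in a flat-free region, I set $T_i(t)=\tau_i(\ell(t))$. If instead $\Sigma$ jumps at $\ell(t)$, I use independence of the $W_i$. Almost surely no two $\tau_i$ jump at the same level, because the jump times of independent subordinators without drift are a.s. disjoint. So exactly one coordinate $i$ jumps, and only that $T_i$ runs linearly through the excursion interval while the other $T_j$ stay frozen.

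This gives continuous nondecreasing $T_i$ with $\sum_i T_i(t)=t$. They also satisfy $L_i(T_i(t))=p_i\ell(t)$, since $L_i$ is constant on excursion intervals. For uniqueness, I note that any solution forces $L_i(T_i(t))/p_i=\ell$ to be a common nondecreasing function of $t$. The constraint $\sum_i T_i=t$ then pins $T_i$ down on the flat pieces of $L_i$, because only one coordinate can be off zero at a time. That coordinate must be the one whose excursion straddles level $\ell$. A monotonicity/comparison argument makes this rigorous.

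\emph{Verifying Theorem \ref{thm:Walsh}.}

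(1) At any time at most one $T_i$ is in the interior of an excursion of $W_i^{\text{refl}}$. Every other coordinate sits at a zero of its reflected process, so $X\in E_d$.

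(2) $L$ increases only when $\ell$ increases. That happens only when no coordinate is inside an excursion, i.e. when $X=0$.

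(4) The Lebesgue time spent at $0$ is $\sum_i|\{s\le T_i(t): W_i^{\text{refl}}(s)=0\}|=0$.

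(3) This is the main step. The $T_i(t)$ are stopping times for the multiparameter filtration $\mathcal F_{s_1,\dots,s_d}=\bigvee_i\mathcal F^{W_i}_{s_i}$, since $\{T(t)\le s\}$ is determined by the paths up to $s$. This is the standard theory of multiparameter time changes, in the style of Kurtz's random time change equations. Optional sampling for the martingales $W_i$ then shows $W_i(T_i(t))=X_i(T_i(t))-L_i(T_i(t))$ is a martingale with quadratic variation $T_i(t)$. Hence $M_i=X_i-p_iL$ is a martingale with $\langle M_i\rangle=T_i(t)=\int_0^t\mathds 1_{\{X_i(s)>0\}}\,\mathrm ds$. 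The last equality uses that $T_i$ grows only when coordinate $i$ is active, together with the fact that the time spent at $0$ is null. Square integrability follows from $T_i(t)\le t$, and $\mathbb E L$ is bounded via $p_iL=X_i-M_i$.

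I expect the main obstacle to be this multiparameter optional sampling. It needs the $d$-parameter filtration to satisfy Cairoli–Walsh's condition (F4), which holds because the coordinates are independent. It also needs the $T(t)$ to be genuine multiparameter stopping times, which requires care at the excursion-switching instants. Finally, Theorem \ref{thm:Walsh} wants martingales with respect to $\mathcal F^X$. I pass from the larger time-changed filtration $\mathcal F_{T(t)}$ to $\mathcal F^X$ by the tower property, noting that $L$ is $\mathcal F^X$-adapted as a limit of occupation times.
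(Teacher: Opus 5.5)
The paper does not prove this statement. It is quoted from \cite{Bayraktar} (see also \cite{Bobrowski-Pilipenko_2025_Walsh}), so there is no internal proof to match step by step. The only related argument in the paper is the uniqueness remark near the end of Section~\ref{sec:spider}. There, the time-change system has a unique solution for every $\omega$ at which the relevant inverse processes share no interval of constancy, and this holds a.s.\ because they are built from independent subordinators. Your uniqueness argument is exactly this idea. To complete it, note that any solution satisfies $\tau_i(\ell(t)-)\le T_i(t)\le\tau_i(\ell(t))$ with $\ell(t)\coloneqq L_i(T_i(t))/p_i$. Summing gives $\Sigma(\ell(t)-)\le t\le\Sigma(\ell(t))$, which determines $\ell(t)$ as the inverse of $\Sigma$. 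Then every $T_j$ except that of the (a.s.\ unique) jumping coordinate is forced, and that one is fixed by $\sum_i T_i(t)=t$.

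Your identification of the Walsh process via Theorem~\ref{thm:Walsh} and multiparameter optional sampling is a sound route. Its merit is that \eqref{e:brM} comes for free; compare the remark in Section~\ref{sec:spider} that checking \eqref{e:brM} was the demanding part of the earlier martingale-problem proof of Theorem~\ref{thm:pert_Walsh}.

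However, the step carrying all the content is only asserted: that $T(t)$ is a stopping time for $\mathcal F_s\coloneqq\bigvee_k\mathcal F^{W_k}_{s_k}$. This is not automatic. $T_i(t)$ can equal $\tau_i(\ell(t))$, the end of an excursion still running at time $s_i$, which is not $\mathcal F^{W_i}_{s_i}$-measurable. Kurtz's theory also does not apply off the shelf, because $T$ is defined by the balance condition rather than by a rate equation.

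You can close this step explicitly. Set $\ell^*(s)\coloneqq\min_k L_k(s_k)/p_k$. On the a.s.\ event of no common jumps, one checks that $\{u\colon T(u)\le s\}=[0,\rho(s)]$ with $\rho(s)\coloneqq\sum_k\tau_k(\ell^*(s))\wedge s_k$. Moreover, $\tau_k(\ell)\wedge s_k=\inf\{r\le s_k\colon L_k(r)>p_k\ell\}$ (with $\inf\emptyset\coloneqq s_k$) depends only on $W_k$ on $[0,s_k]$. Hence $\{T(t)\le s\}=\{t\le\rho(s)\}\in\mathcal F_s$.

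Three smaller corrections:

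\begin{enumerate}
\item Condition (F4) is not needed. Optional sampling at bounded multiparameter stopping times holds in any multiparameter filtration, by approximating from above with dyadic stopping times. Independence is used only to see that $W_i(s_i)$ and $W_i(s_i)^2-s_i$ are $(\mathcal F_s)$-martingales.
\item The $\mathcal F^X$-adaptedness of $L$ must be proved, not taken from the occupation-time formula, since that formula is part of the conclusion of Theorem~\ref{thm:Walsh}. It follows from L\'evy's formula $L_i(u)=\lim_{\varepsilon\to0}\frac{1}{2\varepsilon}\int_0^u\mathds{1}_{\{W_i^{\text{refl}}(r)<\varepsilon\}}\,\mathrm{d}r$ and the change of variables $\mathrm{d}T_i=\mathds{1}_{\{X_i>0\}}\,\mathrm{d}t$ (valid once (4) is known). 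Together these give $L(t)=\sum_iL_i(T_i(t))=\lim_{\varepsilon\to0}\frac{1}{2\varepsilon}\int_0^t\mathds{1}_{\{0<|X(s)|<\varepsilon\}}\,\mathrm{d}s$.
\item There is a typo: $W_i(T_i(t))=X_i(t)-L_i(T_i(t))$, not $X_i(T_i(t))-L_i(T_i(t))$.
\end{enumerate}
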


\end{example}

\section{First steps.  }\label{sec:firststeps}

In this section we give answers to the following general questions:
\begin{itemize}

 \item 
    Assume that we have already proven that there exists a scaling limit of sequence of perturbed RW $\Big(\frac{X(nt)}{a_n}\Big)_{t\geq 0}\Rightarrow  \Big(X_\infty(t)\Big)_{t\geq 0}. $   Consider a continuous time RW $(X^{cont}(t))$ that has the same jumps as $(X(k))$ but spends time $\varepsilon_k$ at the state $X(k)$ between $k$th and $(k+1)$st jumps, where $(\varepsilon_k)$ are i.i.d.  positive random variables with finite expectation. In particular, if $(\varepsilon_k)$ are exponentially distributed, then $X^{cont}$ is a continuous time Markov chain.
What is a scaling limit for   $X^{cont}$? 

The answer is ``no changes'', see Lemma \ref{lem:subordinating} below.

    \item How often $X$ visit the membrane $A?$
    
  Answer: Under natural assumptions, after $n$ steps, the number of visits of $X$ to $A$ is   $o(n)$  as $ n\to\infty,$  see Lemma \ref{lem:neglidg_critical}.

    \item   Assume that we have already proven that there exists a scaling limit of sequence of perturbed RW $\Big(\frac{X(nt)}{a_n}\Big)_{t\geq 0}\Rightarrow  \Big(X_\infty(t)\Big)_{t\geq 0}. $ Since $X$ visits the membrane rarely, can we without loss of generality assume that $X$ exits $A$ at next step after the entrance? Would it affect the  scaling limit?

  Answer: Under natural assumptions, without loss of generality we may assume that $${\mathop{\rm P}}_x(X(1)\notin A)=1, x\in A,$$  see Lemma \ref{lem:equivalent perturbed with skips}.

\item  What is the equation or stochastic representation for the limit process?

Informal arguments are given after  Lemma \ref{lem:equivalent perturbed with skips}, see formula \eqref{eq:guessLimit}. In subsequent sections, this formula and its modification will repeatedly appear in a variety of situations.
   
\end{itemize}

Before presenting the results, let us recall Skorokhod's representation theorem, which enables many proofs to focus on almost sure convergence instead of convergence in distribution. This important result will be used frequently throughout the paper.
\begin{theorem}[The Skorokhod representation theorem]\label{thm:SkorokhodRepresentation}
A sequence  $(\theta_n) $   of random elements taking values in a separable metric space $E$  converges in distribution
\[
\theta_n~\Rightarrow~\theta_\infty,\quad n\to\infty.
\]
if and only if there exists a probability space, a sequence
$(\tilde \theta_n) $ and a random element $\tilde \theta_\infty$ defined on this probability space such that $\tilde \theta_n \overset{{\rm d}}= \theta_n$ for all $ n\geq 1$, $\tilde \theta_\infty \overset{{\rm d}}= \theta_\infty$, and
\[
\lim_{n\to\infty} \tilde \theta_n~=~\tilde\theta_\infty 
\quad \text{{\rm a.s.}}
\]
\end{theorem}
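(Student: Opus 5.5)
The statement just before this point is the Skorokhod representation theorem (Theorem~\ref{thm:SkorokhodRepresentation}), and it has two directions. The ``if'' direction is immediate: almost sure convergence implies convergence in distribution. Indeed, for any bounded continuous $f\colon E\to{\mathbb R}$ we have $f(\tilde\theta_n)\to f(\tilde\theta_\infty)$ a.s., so dominated convergence gives ${\mathop{\rm E}} f(\theta_n)={\mathop{\rm E}} f(\tilde\theta_n)\to {\mathop{\rm E}} f(\tilde\theta_\infty)={\mathop{\rm E}} f(\theta_\infty)$. All the work is in the ``only if'' direction, and the plan below addresses that.

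For the ``only if'' direction I would take Skorokhod's original partition construction on the probability space $([0,1],\mathcal B,\mathrm{Leb})$. Let $\mu_n$ be the law of $\theta_n$ and $\mu_\infty$ the law of $\theta_\infty$. Since $E$ is separable, for each $k$ we can cover $E$ by countably many balls of radius $2^{-k}$ whose boundaries are $\mu_\infty$-null. Such radii exist because only countably many radii can carry boundary mass. Disjointifying these balls and taking common refinements over $k$ gives nested countable partitions $\{D^k_i\}_i$ of $E$ into $\mu_\infty$-continuity sets of diameter at most $2^{-k+1}$. By the portmanteau theorem, $\mu_n(D^k_i)\to\mu_\infty(D^k_i)$ for every $k$ and $i$.

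Next, for each $n\le\infty$ and each $k$, I split $[0,1)$ into consecutive intervals $I^{k}_{i}(n)$ of lengths $\mu_n(D^k_i)$. These intervals are arranged in lexicographic order that respects the nesting, so that the level-$(k+1)$ intervals subdivide the level-$k$ ones. On each interval with $\mu_n(D^k_i)>0$ I choose a point $x^k_i\in D^k_i$ and set $\tilde\theta^{k}_n(\omega)=x^k_i$ for $\omega\in I^k_i(n)$. Within a fixed $n$ the points $\tilde\theta_n^k(\omega)$ are Cauchy in $k$, since successive values lie in nested sets whose diameters shrink. Using completeness (passing to the completion of $E$ if necessary), I define $\tilde\theta_n=\lim_k\tilde\theta_n^k$. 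I then check that $\tilde\theta_n$ has law $\mu_n$, by showing that the image law charges each $D^k_i$ with mass $\mu_n(D^k_i)$ and applying a $\pi$-$\lambda$ argument. If completeness is a real issue, an alternative is to realize $\tilde\theta_n$ as the measurable map attached to the partitions through the standard Borel isomorphism of $E$'s completion.

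The main obstacle is the a.s. convergence itself. Fix $k$ and $\varepsilon>0$, and choose finitely many cells $D^k_1,\dots,D^k_N$ carrying $\mu_\infty$-mass at least $1-\varepsilon$. Because the interval endpoints are finite sums of the masses $\mu_n(D^k_i)$, they converge to the corresponding endpoints for $\mu_\infty$. Hence for a.e.\ $\omega$ in the interior of $I^k_i(\infty)$ with $i\le N$, we have $\omega\in I^k_i(n)$ for all large $n$. For such $\omega$, $\tilde\theta_n(\omega)$ and $\tilde\theta_\infty(\omega)$ lie in the closure of the same cell $D^k_i$, so $d(\tilde\theta_n(\omega),\tilde\theta_\infty(\omega))\le 2^{-k+2}$. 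The set of $\omega$ covered this way has measure at least $1-\varepsilon$. Letting $\varepsilon\to0$ and then $k\to\infty$, and discarding the countably many endpoints, gives $\tilde\theta_n\to\tilde\theta_\infty$ almost surely.

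The delicate bookkeeping in this step is maintaining the nesting across $k$ while letting $n$ vary. I would handle it by building all levels at once: order the cells lexicographically by their index tuples $(i_1,\dots,i_k)$, so that the intervals at every level and for every $n$ are determined by the same ordering.
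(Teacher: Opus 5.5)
You follow Skorokhod's original construction on the unit interval. The paper gives no proof of its own; it cites Skorokhod (complete separable spaces, essentially your construction) and Kallenberg (separable metric spaces).

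For Polish $E$ your plan is the classical proof. The ``if'' direction, the nested $\mu_\infty$-continuity partitions and the a.s.\ convergence step are fine. The identification of the law needs a different argument from the one you announce. Your check that the law $\nu_n$ of $\tilde\theta_n$ gives $D^k_i$ mass $\mu_n(D^k_i)$ does not follow from the construction. For $\omega\in I^k_i(n)$ you only know $\tilde\theta_n(\omega)\in\overline{D^k_i}$, and $D^k_i$ is a continuity set for $\mu_\infty$, not for $\mu_n$, so $\mu_n$ may charge $\partial D^k_i$. Use the approximations instead. The law of $\tilde\theta^k_n$ is $\sum_i\mu_n(D^k_i)\delta_{x^k_i}$, which integrates every bounded $1$-Lipschitz $f$ to within $2^{-k+1}$ of $\int f\,\mathrm{d}\mu_n$. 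Since $\tilde\theta^k_n\to\tilde\theta_n$ pointwise, bounded convergence gives $\nu_n=\mu_n$. In the non-complete case this only gives $\nu_n=\iota_*\mu_n$ on $\bar E$.

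The genuine gap is the non-complete case, which the statement explicitly covers and your patch does not handle. Passing to the completion $\bar E$ gives $\bar E$-valued $\tilde\theta_n$, but nothing places them in $E$. The set $E$ need not be Borel, or even universally measurable, in $\bar E$, so $\{\tilde\theta_n\notin E\}$ need not be contained in a null set. A Borel isomorphism of $\bar E$ changes nothing.

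In fact no construction on $([0,1],\mathrm{Leb})$ can work in general. Let $E\subset[0,1]$ be a Bernstein set and $\mu(B\cap E)\coloneqq\lambda(B)$ the trace of Lebesgue measure. An $E$-valued random element with law $\mu$ on the unit interval could be modified on a null set into a Borel map. Its image would be an analytic subset of $E$ of Lebesgue measure one, hence would contain an uncountable compact set, which is impossible.

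The missing idea is Dudley's coupling, the standard proof of the separable-metric case. It anchors everything at $\theta_\infty$ and never takes limits in $E$:
\begin{itemize}
\item Keep $\tilde\theta_\infty\coloneqq\theta_\infty$ on its own space, enlarged by an independent uniform $\vartheta$ and independent random elements with the elementary conditional laws $\mu_n(\cdot\mid D^k_i)$ and the residual laws below.
\item Fix finitely many level-$k$ cells $D^k_1,\dots,D^k_{N_k}$ of positive $\mu_\infty$-mass with total mass at least $1-2^{-k}$.
\item By the portmanteau theorem, choose $n_1<n_2<\cdots$ with $\mu_n(D^k_i)\geq(1-2^{-k})\mu_\infty(D^k_i)$ for $n\geq n_k$ and $i\leq N_k$.
\item For $n\in[n_k,n_{k+1})$, on $\{\theta_\infty\in D^k_i,\ \vartheta>2^{-k}\}$ with $i\leq N_k$ let $\tilde\theta_n$ be the element with law $\mu_n(\cdot\mid D^k_i)$.
\item Otherwise let $\tilde\theta_n$ have law proportional to $\mu_n-(1-2^{-k})\sum_{i\leq N_k}\mu_\infty(D^k_i)\,\mu_n(\cdot\mid D^k_i)$, which is nonnegative by the choice of $n_k$.
\end{itemize}
Then $\tilde\theta_n$ has law exactly $\mu_n$. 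Moreover $d(\tilde\theta_n,\theta_\infty)\leq 2^{-k+1}$ outside an event of probability at most $2^{-k+1}$ that is the same for all $n$ in the block, so Borel--Cantelli gives a.s.\ convergence.

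For the uses in the paper your Skorokhod-type argument already suffices, since ${\mathcal D}$ with the $J_1$ topology is Polish.
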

 See the proof in~\cite{Skorokhod_Issl} or see Theorem 3.30 in~\cite{Kallenberg}.

\begin{lemma}\label{lem:subordinating}
Let $(\varepsilon_n)$ be a sequence of positive random variables such that $\lim_{n\to\infty} \frac{\varepsilon_1 + \dots + \varepsilon_n}{n} = c$ a.s., where $c > 0$ is a non-random constant. Define the counting process $N(t) \coloneqq n$ for $t \in [\sum_{i=1}^n \varepsilon_i, \sum_{i=1}^{n+1} \varepsilon_i)$, $n \geq 0$, where $\sum_{i=1}^0 \coloneqq 0$. Set $X^{cont}(t) \coloneqq X(N(t))$ for $t \geq 0$.  

We have convergence in distribution in ${\mathcal D}$:
\begin{equation}
    \label{eq:conv_PRW}
    \Big(\frac{X(nt)}{a_n} \Big)_{t \geq 0} \Rightarrow \Big(X_\infty(t) \Big)_{t \geq 0}, \quad n \to \infty,
\end{equation}
if and only if
\[
\Big(\frac{X^{cont}(nt)}{a_n} \Big)_{t \geq 0} \Rightarrow \Big(X_\infty\Big(\frac{t}{c}\Big) \Big)_{t \geq 0}\quad n \to \infty.
\]
\end{lemma}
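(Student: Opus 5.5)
The idea is to write $X^{cont}$ as a random time change of $X$, and conversely $X$ as a random time change of $X^{cont}$. We then use the continuity of the composition map $(y,\lambda)\mapsto y\circ\lambda$ on ${\mathcal D}\times {\mathcal D}$ at pairs where $\lambda$ is continuous and strictly increasing. Set
\[
Y_n(s)\coloneqq \frac{X(ns)}{a_n},\qquad Z_n(s)\coloneqq \frac{X^{cont}(ns)}{a_n},\qquad \tau_n(t)\coloneqq \frac{N(nt)}{n},\qquad \sigma_n(t)\coloneqq \frac{1}{n}\sum_{i=1}^{[nt]}\varepsilon_i .
\]
By definition $X^{cont}(nt)=X(N(nt))$, so $Z_n=Y_n\circ\tau_n$ (with the convention $X(t)=X([t])$). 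Conversely, $X^{cont}\big(\sum_{i=1}^k\varepsilon_i\big)=X(k)$, so $Y_n=Z_n\circ\sigma_n$.

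\textbf{Step 1: deterministic limits of the time changes.} From $\frac1n\sum_{i=1}^n\varepsilon_i\to c$ a.s. we get $\sigma_n(t)\to ct$ a.s. for every $t$. Each $\sigma_n$ is nondecreasing and the limit $t\mapsto ct$ is continuous. By the Pólya--Dini argument for monotone functions, pointwise convergence is therefore uniform on compacts a.s.

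For $\tau_n$ we use that $N$ is the generalized inverse of the partial sums. Hence $\tau_n$ is the (right-continuous) inverse of $\sigma_n$, up to an $O(1/n)$ shift in the index. Inverting the locally uniform convergence $\sigma_n\to c\,\mathrm{id}$ to a strictly increasing continuous limit gives $\tau_n(t)\to t/c$ uniformly on compacts a.s. This uses that $\varepsilon_i>0$ and $\sum\varepsilon_i\to\infty$, so $N(t)<\infty$.

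\textbf{Step 2: joint convergence.} No independence between $(\varepsilon_k)$ and $X$ is assumed. However, the limits of $\tau_n$ and $\sigma_n$ are deterministic, so the Slutsky-type lemma applies: if $Y_n\Rightarrow X_\infty$ and $\tau_n\to \mathrm{id}/c$ in probability in ${\mathcal D}$, then $(Y_n,\tau_n)\Rightarrow(X_\infty,\mathrm{id}/c)$ in ${\mathcal D}\times{\mathcal D}$. The same holds for $(Z_n,\sigma_n)\Rightarrow (X_\infty(\cdot/c), c\,\mathrm{id})$. Here ${\mathcal D}$ is separable, so Theorem \ref{thm:SkorokhodRepresentation} may be used to pass to a.s. convergence if convenient.

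\textbf{Step 3: continuous mapping.} The composition map $(y,\lambda)\mapsto y\circ\lambda$ is continuous in the $J_1$ topology at every $(y,\lambda)$ with $y\in{\mathcal D}$ and $\lambda$ continuous and strictly increasing. It suffices that the approximating $\lambda_n$ are nondecreasing with $\lambda_n(0)\to0$ (Billingsley's random time change theorem; Whitt, Thm.~13.2.2). The limits $\mathrm{id}/c$ and $c\,\mathrm{id}$ satisfy this.

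Applying it gives the "only if" direction: $Z_n=Y_n\circ\tau_n\Rightarrow X_\infty(\cdot/c)$. It also gives the "if" direction: $Y_n=Z_n\circ\sigma_n\Rightarrow X_\infty(c\cdot/c)=X_\infty$. The normalization $a_n$ is the same on both sides, so nothing else changes.

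The main point requiring care is Step 1 for $\tau_n$. We must make sure the inverse of $\sigma_n$ coincides with $N(n\cdot)/n$ up to a vanishing error, including the endpoint conventions of the intervals $[\sum_{i\le n}\varepsilon_i,\sum_{i\le n+1}\varepsilon_i)$. We must also check that the composition lemma applies to merely nondecreasing, non-strictly increasing $\tau_n$, $\sigma_n$. I would handle this with the explicit bound $\sup_{t\le T}|\tau_n(t)-t/c|\le \frac1c\sup_{s\le T/c+1}|\sigma_n(s)-cs|+\frac1n$, obtained directly from the inverse relation.
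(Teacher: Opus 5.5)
Your proposal is correct and follows essentially the same route as the paper. The paper's steps are: locally uniform a.s.\ convergence of the rescaled partial sums and of their generalized inverse $N(n\cdot)/n$ to deterministic linear limits; joint convergence from the deterministic limit (Billingsley's Theorem 3.9); then continuity of composition at a continuous, strictly increasing time change (Whitt's Theorem 13.2.1, after Skorokhod representation). Your explicit inverse bound replaces the paper's appeal to Whitt's Corollary 13.6.4, and your forward time change $Y_n=Z_n\circ\sigma_n$ spells out the converse that the paper only says ``may be proven similarly''.
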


\begin{proof}
For a non-decreasing function $f:{\mathbb R}_+\to{\mathbb R}_+$, we define the generalized inverse by
\[
f^{-1}(t)\coloneqq \inf\{s\geq 0\ : \ f(s)>t\},
\]
see \cite{Whitt} for a detailed investigation of its properties.

Note that $N(t)=S_\varepsilon^{-1}(t)-1$, where $S_\varepsilon(n)=\varepsilon_1+\dots+\varepsilon_n$ for $n\geq 1$ and $S_\varepsilon(0)=0$. The almost sure convergence $\lim_{n\to\infty}\frac{S_\varepsilon(n)}{n}=c$ implies the locally uniform convergence
\[
\forall T>0\ \ \lim_{n\to\infty}\max_{t\in[0,T]}\Big|\frac{S_\varepsilon(nt)}{n}-c t\Big|=0 \quad \text{a.s.}
\]
Since the function $f(t)=ct$ is strictly increasing, we have a.s. locally uniform convergence of the inverses (see \cite[Corollary 13.6.4]{Whitt}):
\[
\forall T>0\ \ \lim_{n\to\infty} \max_{t\in[0,T]}\Big|\frac{S^{-1}_\varepsilon(nt)}{n}- \frac{t}{c} \Big|=0.
\]
The limit in the r.h.s. is non-random; therefore, we have the joint convergence of pairs \cite[Theorem 3.9]{Billingsley}:
\[
\Big(\frac{X(nt)}{a_n}, \frac{N(nt)}{n}\Big)_{t\geq 0}\Rightarrow  \Big(X_\infty(t), \frac{t}{c} \Big)_{t\geq 0}.
\]
It follows from the Skorokhod representation theorem that there exist copies $\left(\frac{\widetilde X_n(nt)}{a_n}, \frac{\widetilde N_{n}(nt)}{n}\right)$ of $\left(\frac{X(nt)}{a_n}, \frac{N(nt)}{n}\right)$, defined on a common probability space, that converge a.s. in ${\mathcal D}$ to $\left(\widetilde X_\infty(t), \frac{t}{c}\right)$. Here the first coordinate has the same distribution as $X_\infty$. Thus, we have a.s. convergence of compositions in ${\mathcal D}$ (see \cite[Theorem 13.2.1]{Whitt}):
\[
 \Big( \frac{\widetilde X_n(n\cdot)}{a_n}\circ \frac{\widetilde N_{n}(nt)}{n}\Big)_{t\geq 0}  =  \Big(\frac{\widetilde X_n({\widetilde N_{n}(nt)})}{a_n}\Big)_{t\geq 0}\to  \Big(\widetilde X_\infty\Big(\frac{t}{c}\Big) \Big)_{t\geq 0}, \ n\to\infty,
\]
because $g(t)=\frac{t}{c}$ is a continuous and strictly increasing function. This proves necessity. Sufficiency may be proven similarly.
\end{proof}

%%%%%%%%%%%%%%%%%%%%%%%%%%%%%%%%%%%%

It would be natural to use unperturbed random walk $S_\xi$ and its properties in a construction of the perturbed random walk. However, some  random variables  among $ \xi_1,\dots,\xi_n$ are not used  in representation  \eqref{eq:transitions_PRW}: we skip those $\xi_k$s when the perturbed random walk visits the membrane. Below we   present another  than \eqref{eq:X_repr} probabilistic representation  of Markov chain with transition probabilities \eqref{eq:transitions_PRW}. 
 
Let random variables  $(\xi_n), (\eta_{n,x}), x\in A$, and perturbed RW $X$ be as in \S \ref{sec:2formulation}.  Set $Y(0)\coloneqq X(0)$  and construct   $(Y(n))_{n\geq 1}$ recursively.
Assume that the random variables $Y(0), Y(1), \dots, Y(n)$ are known.

Let $T^{(x)}(n)$, for $x \in A$, denote the number of visits to $x$ up to time $n$: 
\[
T^{(x)}(n)\coloneqq \sum_{k=0}^{n-1} {\mathds{1}}_{\{Y(k)=x\}},\quad x\in A,%|x|\leq m.
\]
where, as before, we define $\sum_{k=0}^{-1} \coloneqq 0$. Furthermore, let $T^{\text{normal}}(n)$ and $T^{\text{critical}}(n)$ represent the total time spent outside the membrane and at the membrane, respectively:
\begin{align*}
T^{\text{normal}}(n)&\coloneqq n-\sum_{x\in A}T^{(x)}(n)= \sum_{k=0}^{n-1}{\mathds{1}}_{\{Y(k)\notin A\}} \\ T^{\text{critical}}(n)&\coloneqq  \sum_{x\in A}T^{(x)}(n)= \sum_{k=0}^{n-1}{\mathds{1}}_{\{Y(k)\in A\}}.
\end{align*}
Set $Y(n+1)\coloneqq Y(n)+\xi_{T^{\text{normal}}(n)+1}$ if $Y(n)\notin A$ and $Y(n+1)\coloneqq Y(n)+\eta^{(x)}_{T^{(x)}(n)+1}$ if $Y(n)=x\in A$. So,
 \begin{equation}\begin{aligned}\label{eq:20_representation_PRW}
Y(n)&= Y(0)+\sum_{k=1}^{T^{\text{normal}}(n)} \xi_k +\sum_{x\in A}
\sum_{k=1}^{T^{(x)}(n)}\eta^{(x)}_{k} \\
&= S_\xi( T^{\text{normal}}(n))+\sum_{x\in A} S_{\eta^{(x)}}(T^{(x)}(n)), \quad n\geq 0,
\end{aligned}\end{equation}
where $S_\xi(n)=Y(0)+\sum_{k=1}^n\xi_k, $ and $S_{\eta^{(x)}}(n) = \sum_{k=1}^n \eta^{(x)}_{k}$.

It can be seen that $(X(n))_{n\geq 0}\overset{d}{=}(Y(n))_{n\geq 0}.$ Representation \eqref{eq:20_representation_PRW} is often more convenient for  the intuition and proofs. 

Further we consider only the case when $U$ in \eqref{eq:conv_Skor1} is non-degenerate $\alpha$-stable process with $\alpha\in(1,2].$ The case  $\alpha \in(0,1) $ and finite $A$  is less interesting from the probabilistic point of view.  Indeed, in this case  $S_\xi$  is transient, and hence (after some justification) the chain $Y$. The latter implies  that the limit   $T^{\text{critical}}(+\infty)\coloneqq \lim_{n\to\infty} T^{\text{critical}}(n)$ is finite a.s. Hence, the limit of $\Big(\frac{Y(nt)}{a_n}\Big)_{t\geq 0}$ is simply $(U(t))_{t\geq 0}$, coinciding with the limit of  $\Big(\frac{S_\xi(nt)}{a_n}\Big)_{t\geq 0}$.

The next result demonstrates that, under natural assumptions, the perturbed random walk  visits the membrane relatively infrequently.
 \begin{lemma}
    \label{lem:neglidg_critical} 
    Let $Y$ be a perturbed random walk with the representation \eqref{eq:20_representation_PRW}, where ${\mathop{\rm E}} \xi=0$. Assume that all states of ${\mathbb Z} \setminus A$ communicate and that the random walk cannot stay in the membrane forever with probability 1. Furthermore, assume that either:
    \begin{itemize}
        \item the membrane $A$ is finite;
        \item or $A = {\mathbb Z} \setminus {\mathbb N}$ and the random walk $Y$ exits the membrane immediately after any entry, i.e., ${\mathop{\rm P}}(Y(n+1) \notin A \mid Y(n) = x) = 1$ for all $x \in A$.
    \end{itemize}
    Then, with probability 1, the following convergences hold % uniformly on compact sets
    for any $T > 0$:
    \begin{align*}
        \lim_{n\to\infty} \sup_{t \in [0,T]} \frac{T^{\text{critical}}(nt)}{n} = 0, \quad \text{and} \quad \lim_{n\to\infty} \sup_{t \in [0,T]} \Big| \frac{T^{\text{normal}}(nt)}{n} - t \Big| = 0.
    \end{align*}
\end{lemma}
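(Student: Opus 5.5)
First, the sup reduces to the endpoint. Both $T^{\text{critical}}$ and $T^{\text{normal}}$ are non-decreasing, and $T^{\text{normal}}(n)+T^{\text{critical}}(n)=n$. Hence $\sup_{t\le T} T^{\text{critical}}(nt)/n = T^{\text{critical}}(nT)/n$, and
\[
\sup_{t\in[0,T]}\Big|\frac{T^{\text{normal}}(nt)}{n}-t\Big|\le \frac{T^{\text{critical}}(nT)+1}{n}.
\]
So it suffices to prove $T^{\text{critical}}(n)/n\to0$ a.s. The idea is a renewal/ratio argument: between consecutive visits to the membrane, the walk makes excursions outside $A$ driven by the mean-zero walk $S_\xi$. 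Since $\mathrm{E}\xi=0$, the first return time of $S_\xi$ to a half-line or finite set has infinite mean.

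\textbf{Step 1 (decomposition).} Let $\sigma_1<\sigma_2<\dots$ be the successive times at which $Y$ \emph{exits} $A$, i.e.\ $Y(\sigma_k-1)\in A$ and $Y(\sigma_k)\notin A$. Let $\tau_k$ be the next entrance time to $A$ after $\sigma_k$, with $\tau_k=\infty$ if there is none. If only finitely many excursions occur, $T^{\text{critical}}(\infty)<\infty$ (the walk cannot stay in $A$ forever), and we are done. Otherwise $T^{\text{critical}}(n)\le \#\{k:\sigma_k\le n\}\cdot C_k$, where $C_k$ counts the consecutive steps spent in $A$ before exit $k$.

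In the reflective case $C_k=1$. In the finite case, by the strong Markov property and the assumption that the walk cannot stay in $A$ forever, the sojourns $C_k$ are dominated by i.i.d.\ geometric-type variables with finite mean. The SLLN then gives $\sum_{k\le N}C_k=O(N)$. It therefore suffices to show that the number of excursions $N(n)\coloneqq\#\{k:\sigma_k\le n\}$ satisfies $N(n)/n\to0$.

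\textbf{Step 2 (infinite mean excursions).} The lengths $\tau_k-\sigma_k\ge1$ satisfy $n\ge\sum_{k<N(n)}(\tau_k-\sigma_k)$. The starting points $Y(\sigma_k)$ take values in a set determined by jumps from $A$, and the walk is Markov, so these lengths are not i.i.d. Instead, I would stochastically bound them from below.

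In the reflective case, an excursion started from $x\ge1$ lasts at least as long as the time for $x+S_\xi$ to enter $(-\infty,0]$. This dominates the corresponding time started from $1$, which has infinite mean for a mean-zero walk by the classical fact $\mathrm{E}\tau_-^{(1)}=\infty$ (Wald: otherwise $\mathrm{E}S_\xi(\tau)=0$, contradicting $S_\xi(\tau)\le-1$).

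In the finite case $A=\{-m,\dots,m\}$, the exit point lies outside $A$. If $Y(\sigma_k)>m$, the excursion is at least the hitting time of $(-\infty,m]$ from $m+1$, which again has infinite mean. Otherwise it is at least the hitting time of $[-m,\infty)$ from $-m-1$, also of infinite mean.

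Thus, conditionally on the past, each $\tau_k-\sigma_k$ stochastically dominates $\min$-free variables $Z_k$ whose common law $\mu$ (the smaller of the two laws) has infinite mean. By coupling, $\sum_{k\le N}(\tau_k-\sigma_k)\ge\sum_{k\le N}Z_k$ with $Z_k$ i.i.d.\ $\sim\mu$. The SLLN for infinite-mean nonnegative variables gives $\sum_{k\le N}Z_k/N\to\infty$ a.s. Hence $N(n)/n\to0$ a.s.

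\textbf{Main obstacle.} The delicate step is the coupling in Step 2. The claim that $\tau_k-\sigma_k$ dominates an i.i.d.\ infinite-mean sequence must use the strong Markov property together with monotonicity of hitting times in the starting point. That monotonicity holds because the excursion is driven by the translation-invariant walk $S_\xi$ until it enters $A$. I would carry this out via the conditional quantile coupling $Z_k\coloneqq F_\mu^{-1}(U_k)$ built sequentially. A second subtlety is the finite-mean claim for the sojourns $C_k$. Here I would use that $A$ is finite and that, from each $x\in A$, exit occurs within $|A|$ steps with probability at least some $\delta>0$.
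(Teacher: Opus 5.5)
\textbf{Assessment.} Your overall scheme is sound: geometric sojourns in $A$, excursions off $A$ whose conditional lengths dominate infinite-mean ladder-epoch laws (via Wald), a sequential quantile coupling, and the SLLN for i.i.d.\ infinite-mean sums. The reflective case works as written.

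\textbf{The gap.} The step that is not justified is in the finite case. You minorize every excursion by one law $\mu$, ``the smaller'' of two laws: $\mu_+$, the hitting time of $(-\infty,m]$ from $m+1$ (a strict descending ladder epoch of $S_\xi$), and $\mu_-$, the hitting time of $[-m,\infty)$ from $-m-1$ (a strict ascending ladder epoch). You then assert that $\mu$ has infinite mean. This does not follow from $\mu_\pm$ having infinite means, because a law dominated by two infinite-mean laws can have finite mean. For instance, take $\epsilon_k\downarrow 0$ fast and $u_{k+1}-u_k=1/\epsilon_k$. On $[u_k,u_{k+1})$ let the two tails be $\epsilon_k$ and $\epsilon_{k+1}$, with the roles swapping at every $k$. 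Both tails are non-increasing with infinite integrals, while their minimum integrates to $\sum_k\epsilon_{k+1}/\epsilon_k<\infty$. For the two ladder-epoch laws the claim is in fact true, but proving it needs extra input, e.g.\ the Sparre Andersen/Wiener--Hopf identity for the product of the generating functions of ${\mathop{\rm P}}(T_->n)$ and ${\mathop{\rm P}}(T_+>n)$.

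\textbf{The repair.} Drop the minimum. Count separately the excursions starting above $A$ and those starting below. Along each subsequence your coupling gives i.i.d.\ minorants with law $\mu_+$, respectively $\mu_-$. Hence $N_\pm(n)/n\to 0$ and $N(n)=N_+(n)+N_-(n)=o(n)$.

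\textbf{A smaller point.} Like the paper's sketch, you tacitly take $A=\{-m,\dots,m\}$. If a finite $A$ has gaps, an exit may land in $[\min A,\max A]\setminus A$, and the excursion lower bound fails. In that case work with the interval hull of $A$, using that $\mathbb{Z}\setminus A$ communicates to get geometric sojourns in the hull.

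\textbf{Comparison with the paper.} With this fix, your proof is a genuinely different route. The paper argues pathwise. In the reflective case, an entrance into $A$ at time $k$ is possible only if $T^{\text{normal}}(k)$ is a descending ladder epoch of the single walk $S_\xi$, and distinct entrances give distinct epochs. So $T^{\text{critical}}(nT)$ is at most the number of ladder epochs of $S_\xi$ in $[0,nT+1]$, which is $o(n)$ a.s.\ by the renewal SLLN, since ladder epochs have infinite mean. For finite $A$ the paper does the same for the walks assembled from the jumps made above and below $A$, which is exactly the splitting that makes a common minorant unnecessary. It then adds finiteness of the mean sojourn in $A$.

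The paper's route therefore needs no coupling or conditioning beyond the i.i.d.\ structure of the jump sequences in the representation. Yours replaces the deterministic ladder-epoch comparison with the strong Markov property plus a sequential coupling. That is more robust, in that it only requires the conditional excursion lengths to dominate a fixed infinite-mean law.
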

\begin{proof}
For $A = {\mathbb Z} \setminus {\mathbb N}$, the Markov chain $Y$ can visit $A$ at time $k$ only if $T^{\text{normal}}(k)$ is a descending ladder epoch of $S_\xi$. Since $T^{\text{normal}}(k) \leq k$, the total number of such visits by $Y$ up to $nT$ is at most the number of descending ladder epochs of $S_\xi$ in $[0, \lfloor nT \rfloor + 1]$. 
The expectation of the descending ladder epoch is infinite because ${\mathop{\rm E}}\xi=0$; cf. \cite[\S 2.9, Theorems 9.1 and 9.2]{Gut:2009} and \cite[Chapter XII.2, Theorem 2]{Feller1966Vol2}. Therefore, \cite[\S 3.4, Theorem 4.1]{Gut:2009} implies $\lim_{n\to\infty} T^{\text{critical}}(n)/n=0$ a.s.  

The rest of the proof follows from the observations that
\[
\sup_{t\in[0,T]} \frac{T^{\text{critical}}(nt)}{n} = \frac{T^{\text{critical}}(nT)}{n}
\]
and 
\[
\Big|\frac{T^{\text{critical}}(nt)}{n} + \frac{T^\text{normal}(nt)}{n} - t\Big| \leq \frac{1}{n}.
\]

%We leave the case when $A$ is finite to the reader.
We give the idea of the proof in the case $A$ is finite. As in the representation \eqref{eq:20_representation_PRW}, jumps outside $A$ can be divided into two types: those that occur when $Y$ is above $A$, and those when $Y$ is below $A$ (see the more general representation \eqref{eq:WalshRW_repr} for a random walk on a graph). The number of times $Y$ enters $A$ from above does not exceed the number of descending ladder epochs of a RW that used jumps when $Y$ is above $A$. The same logic applies to entrances from below. Consequently, the total number of entrances into $A$ is $o(n)$ a.s. It is not necessary to assume that $Y$ leaves the membrane immediately after entering, since $A$ is finite and its expected exit time is also finite.
\end{proof}

%%%%%%%%%%%%%%%%%%%%%%%%%%%%%%%%%%%%%%%%%%%%%%%%%%%%%%%%%%%%%%%%%%%%

%rata{
Let $Y$ be a perturbed random walk with transition probabilities \eqref{eq:transitions_PRW} and representation \eqref{eq:20_representation_PRW}. Define two associated Markov chains, $\widetilde Y$ and $\hat Y$. The sequence $(\widetilde Y(n))$ is derived from $(Y(n))$ by removing all steps where $Y$ is in $A$, while $\hat Y$ is constructed by omitting jumps within $A$. These can be defined as:
\[
\widetilde Y(n) = Y( (T^\text{normal})^{-1}(n)-1)
\]
and
\[
\hat Y(n) = Y((\hat T^\text{normal})^{-1}(n)-1),
\]
where $\hat T^\text{normal}(n) \coloneqq  \sum_ {k=0}^{n-1} \big({\mathds{1}}_{Y(k)\notin A} + {\mathds{1}}_{Y(k)\in A,\, Y(k-1)\notin A}\big)= n-\sum_ {k=0}^{n-1} {\mathds{1}}_{ Y(k-1)\notin A,\, Y(k)\notin A}$.  
 
Set $\sigma_B\coloneqq \sigma_B(Y)\coloneqq \inf\{k\geq 1 \colon Y(k)\in B\}.$
Transition probabilities  of $\widetilde Y, \hat Y$ are
 \begin{equation}
     \begin{aligned}
\label{eq:transitions_PRW skips1}
    \widetilde p_{x,y}&= 
     {\mathop{\rm P}}(Y(\sigma_{{\mathbb Z}\setminus A} )=y \ | \    Y(0)=x) 
     \\
    &={\mathop{\rm P}}(\xi=y-x) + \sum_{u\in A}{\mathop{\rm P}}(\xi=u-x){\mathop{\rm P}}( Y(\sigma_{{\mathbb Z}\setminus A})=y\ | \   Y(0)=u) , & x,y\notin A;
  \end{aligned}
\end{equation}
\begin{equation}
     \begin{aligned}   
    \label{eq:transitions_PRW skips2}
    \hat p_{x,y}&=\begin{cases}
    {\mathop{\rm P}}(\xi=y-x), & x\notin A, y\in{\mathbb Z};\\
    {\mathop{\rm P}}(Y(\sigma_{{\mathbb Z}\setminus A} )=y \ | \    Y(0)=x), & x\in A, y\in {\mathbb Z}.
\end{cases}
\end{aligned}
\end{equation}

 Note that the state space for  $\widetilde Y$ is ${\mathbb Z}\setminus A$. 

\begin{lemma}
    \label{lem:equivalent perturbed with skips}

1) If  $(Y(n\cdot )/a_n)$ converges in distribution in ${\mathcal D}$ and   
\begin{equation}\begin{aligned}\label{eq:critical small}
\frac{T^{\text{critical}}(n )}{n}%\coloneqq \frac{\int_0^{n} {\mathds{1}}_{  Y (s)\in A}\mathrm{d} s}{n}
\overset{{\mathop{\rm P}}}{\to}0, \quad n\to \infty, 
\end{aligned}\end{equation}
then sequences  $(\widetilde Y(n\cdot )/a_n)$ and $(\hat Y(n\cdot )/a_n)$ converge in distribution.

2) Assume that   $(\widetilde Y(n\cdot )/a_n)$  or  $(\hat Y(n\cdot )/a_n)$ converges in distribution in ${\mathcal D}$, \eqref{eq:critical small} holds   and   
 \begin{equation}\begin{aligned}\label{eq:jumps in membrane}
 \max_{0\leq s_1<s_2\leq  t}\frac{|Y(ns_2)-Y(ns_1)|}{a_n}{\mathds{1}}_{   Y(z)\in A, \ z\in (ns_1,ns_2]  }\overset{{\mathop{\rm P}}}{\to} 0, \quad n\to \infty.
 \end{aligned}\end{equation}
 %where $(a_n)$ is a sequence of positive numbers, $\lim_{n\to\infty} a_n=\infty.$
 Then $(Y(n\cdot )/a_n)$  converges in distribution.
 
 In all cases all the limits coincide.
\end{lemma}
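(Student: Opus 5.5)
The plan is to write each of the three processes as a time change of another one and then apply the composition argument from the proof of Lemma \ref{lem:subordinating}.

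By construction, $\widetilde Y(n)=Y(\tau(n))$ with $\tau\coloneqq (T^{\text{normal}})^{-1}-1$. Similarly, $\hat Y(n)=Y(\hat\tau(n))$ with $\hat\tau\coloneqq(\hat T^{\text{normal}})^{-1}-1$. Both $\tau$ and $\hat \tau$ are non-decreasing. Assumption \eqref{eq:critical small} is only convergence in probability at fixed times, but $T^{\text{critical}}$ is monotone. It therefore upgrades to $\sup_{t\le T}T^{\text{critical}}(nt)/n\overset{{\mathop{\rm P}}}{\to}0$. Hence $\sup_{t\le T}|T^{\text{normal}}(nt)/n-t|\to0$ in probability, and likewise for $\hat T^{\text{normal}}$, since $0\le \hat T^{\text{normal}}-T^{\text{normal}}\le T^{\text{critical}}$.

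By \cite[Corollary 13.6.4]{Whitt}, the inverses satisfy $\tau(nt)/n\to t$ and $\hat\tau(nt)/n\to t$ locally uniformly in probability. The limit is deterministic, so \cite[Theorem 3.9]{Billingsley} gives joint convergence of the pairs $\bigl(Y(n\cdot)/a_n,\tau(n\cdot)/n\bigr)$ to $(Y_\infty,\mathrm{id})$. We then pass to a.s.\ convergence by Theorem \ref{thm:SkorokhodRepresentation}. Composition is continuous at (any element, continuous strictly increasing time change) by \cite[Theorem 13.2.1]{Whitt}. This yields part 1), with the same limit for both $\widetilde Y$ and $\hat Y$.

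For part 2) we reverse the roles. We have $Y(k)=\widetilde Y(T^{\text{normal}}(k))$ whenever $Y(k)\notin A$. For general $k$, let $k'$ be the last time $\le k$ with $Y(k')\notin A$. Then $Y(k)$ differs from $Y(k'+1)$ by a displacement accumulated entirely while inside $A$. Concretely, define $Z_n(t)\coloneqq \widetilde Y(T^{\text{normal}}(nt))/a_n$. The same time-change argument shows that $Z_n$ converges to the limit of $\widetilde Y(n\cdot)/a_n$, with the time change $T^{\text{normal}}(n\cdot)/n\to\mathrm{id}$. It remains to show $\sup_{t\le T}|Y(nt)/a_n-Z_n(t)|\overset{{\mathop{\rm P}}}{\to}0$. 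The two processes coincide except on stretches of time spent in $A$, together with the entry step into $A$.

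The difference on such a stretch is exactly the increment $|Y(ns_2)-Y(ns_1)|$ with $Y\in A$ on $(ns_1,ns_2]$. Assumption \eqref{eq:jumps in membrane} makes the supremum of these increments negligible. The entry jump itself belongs to $Y$ and appears in $\widetilde Y$ merged with the exit jump; this merging is exactly what the formula \eqref{eq:transitions_PRW skips1} encodes. The case of $\hat Y$ is analogous and simpler, since only the intermediate in-membrane steps are collapsed. Finally, a perturbation that is uniformly small in probability does not change the $J_1$ limit, by \cite[Theorem 3.1]{Billingsley}.

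The main obstacle is the bookkeeping in part 2). On an excursion into $A$, the path of $Y$ sits near the entry point while $\widetilde Y$ has already jumped to the exit point. Because of this, the comparison must be made with care. It should be done in $J_1$ by a small time shift of order $T^{\text{critical}}$, not uniformly pointwise, so that the entry jump matches up and only the in-membrane increments, controlled by \eqref{eq:jumps in membrane}, remain.
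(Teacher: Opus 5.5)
\textbf{Part 1.} Your argument is the paper's: a time change composed with an a.s.-convergent copy of $Y(n\cdot)/a_n$ via \cite[Theorem 13.2.1]{Whitt}, exactly as in Lemma \ref{lem:subordinating}. It is fine.

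\textbf{Part 2.} There is a gap in the comparison step. $T^{\text{normal}}(k)$ counts the times \emph{strictly before} $k$ spent outside $A$. Hence $\widetilde Y(T^{\text{normal}}(k))$ is $Y$ at the first time $\ge k$ outside $A$. Suppose $k$ lies in a sojourn $\{s,\dots,e-1\}\subset A$ with $Y(s-1),Y(e)\notin A$. Then $a_nZ_n(k/n)-Y(k)=Y(e)-Y(k)$, and this contains the exit jump $Y(e)-Y(e-1)$.

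Condition \eqref{eq:jumps in membrane} never sees exit jumps, since its indicator forces $Y(ns_2)\in A$. Exit jumps can be of order $a_n$, for instance with heavy-tailed exits as in Example \ref{expl:refl_Levy} or Theorem \ref{thm:lim_skew_Levy_walk}(b). So the claim $\sup_{t\le T}|Y(nt)/a_n-Z_n(t)|\overset{{\mathop{\rm P}}}{\to}0$ is false in general, and the appeal to \cite[Theorem 3.1]{Billingsley} through the uniform distance does not go through.

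Your last paragraph notices the mismatch, but it only gestures at a $J_1$ repair, and the description is imprecise. What a time change must do is match the merged jump of $Z_n$ at the entry time $s$ with the exit jump of $Y$ at time $e$. One way is a piecewise linear $\lambda_n$ with $\lambda_n(s/n)=e/n$ for every sojourn and $\lambda_n(k/n)=k/n$ whenever $Y(k-1),Y(k)\notin A$; then $\sup|\lambda_n-\mathrm{id}|$ is of order (longest sojourn)$/n$. What remains after this matching is the entry jump plus the in-membrane displacement, not the in-membrane increments alone.

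\textbf{A cleaner fix.} You can avoid the $J_1$ bookkeeping by comparing $Y$ with $\widetilde Y$ at the last time \emph{before} the entrance. Put $W_n(t)\coloneqq \widetilde Y\bigl(T^{\text{normal}}(nt+1)-1\bigr)/a_n$. The index $T^{\text{normal}}(k+1)-1$ is the $\widetilde Y$-index of the last $k'\le k$ with $Y(k')\notin A$. So $|Y(k)-a_nW_n(k/n)|=|Y(k)-Y(k')|$, where $Y\in A$ at every integer time in $(k',k]$. This is exactly the quantity controlled by \eqref{eq:jumps in membrane}, read as in Remark \ref{remk:skips} so that it includes the entrance step. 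Hence $\sup_{t\le T}|Y(nt)/a_n-W_n(t)|\overset{{\mathop{\rm P}}}{\to}0$.

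Meanwhile $W_n=\bigl(\widetilde Y(n\cdot)/a_n\bigr)\circ\bigl((T^{\text{normal}}(n\cdot+1)-1)/n\bigr)$ converges to the limit of $\widetilde Y(n\cdot)/a_n$. This follows from \eqref{eq:critical small} and the same composition theorem as in part 1), with the obvious adjustment if $Y(0)\in A$.

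For $\hat Y$, evaluate at the entrance time instead; then only genuine in-membrane increments remain. The paper handles this step through the composition criterion \cite[Proposition 4.3.10]{IPMS_book}, with \eqref{eq:jumps in membrane} as the sufficient condition.
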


\begin{proof}[Proof of Lemma \ref{lem:equivalent perturbed with skips}]
Applying Skorokhod's representation theorem, we may work with copies of the stochastic processes that converge a.s. The proof of the first part of the Lemma is the same as  in Lemma \ref{lem:subordinating}. The second case is a consequence of Skorokhod's representation theorem together with \cite[Proposition 4.3.10]{IPMS_book}, which provides a sufficient condition guaranteeing that the limit of compositions of càdlàg functions is equal to the composition of their  limits.
\end{proof}
\begin{remark}\label{remk:skips}
     Condition \eqref{eq:critical small} is satisfied if, for example, conditions of Lemma \ref{lem:neglidg_critical} are satisfied.
Condition \eqref{eq:jumps in membrane} means that $Y$ cannot enter $A$ by a large jump   and also that $Y$ cannot make too big jumps inside $A$ (the last condition make sense only if $A$ is infinite). Observe that if $A\subset [-m,m],$ then  the left-hand side of \eqref{eq:jumps in membrane} does not exceed $\frac{2m}{a_n}+ \max_{0\leq k\leq nt}\frac{|\xi_{k+1}-\xi_k|}{a_n}$; if $A={\mathbb Z}\setminus {\mathbb N}$ and ${\mathop{\rm P}}(Y(n+1)\in{\mathbb N} | Y(n)\in A)=1$, then the left-hand side of \eqref{eq:jumps in membrane} does not exceed $ \max_{0\leq k\leq nt}\frac{|\xi_{k+1}-\xi_k|}{a_n}.$ 
Condition \eqref{eq:jumps in membrane} is satisfied in previous two cases if ${\mathop{\rm E}} \xi=0, {\mathop{\rm Var}} \xi<\infty, a_n=\sqrt{n}$ because $\max_{0\leq k\leq n}\frac{\xi_k}{\sqrt{n}}\overset{{\mathop{\rm P}}}{\to}0$, e.g., \cite[Theorem 2.1 and Remark 2.1, p. 270]{GutGraduate}. It can be shown (but not so straightforward), that   \eqref{eq:jumps in membrane} is also true if $A$ is finite, $\xi$ belongs to the domain of attraction of symmetric $\alpha$-stable law with $\alpha\in(1,2)$ and $(a_n)$ is the corresponding scaling sequence.
The proof follows from the fact that with probability 1 symmetric $\alpha$-stable process cannot enter 0 by a jump. Consequently, prelimit sequence also cannot enter the membrane by a large jump with high probability.
\end{remark}

%%%%%%%%%%%%%%%%%%%%%%%%%%%%%%%%%%%%%%%%%%%%%%%%%%%%%%%%%%%%%%%%%%%%%%%%%%

To prove existence of a scaling limit for perturbed random walks, it is helpful to have some idea of what the limit or its characteristics might be. Next, we propose a conjecture about the stochastic description of the limit process.

Under conditions of  Lemma \ref{lem:neglidg_critical} and \eqref{eq:conv_Skor1} we have convergence in distribution (see   arguments based on Skorokhod's representation theorem in the proof of Lemma \ref{lem:subordinating}):
\begin{equation}\label{eq:conv-stab-proc}
    \Big(\frac{S_\xi(T^{\text{normal}}(nt)  )}{a_n}\Big)_{t\geq 0}=    \Big(\frac{S_\xi(n\cdot  )}{a_n}\circ \frac{T^{\text{normal}}(nt)}{n}\Big)_{t\geq 0}\Rightarrow (U(t))_{t\geq 0}, \quad n\to\infty,
\end{equation}
where $U$ is an $\alpha$-stable Lévy process.

If we expect existence of scaling limits of $Y$ it is reasonable to assume that there are scaling limits of  $S_{\eta^x}, x\in A$ (but maybe with other scaling than for $S_\xi $). 
\[
 \Big(\frac{S_{\eta_x}(nt)}{a_{n,x}}\Big)_{t\geq 0}\Rightarrow \Big(V_x(t)\Big)_{t\geq 0}, \quad n\to\infty.
\]
After some quite standard manipulations with   scaling sequences, which are regularly varying at infinity, we can find sequences $(b_{n,x})_{n\in{\mathbb N}},   x\in A,$ such that
\[
 \Big(\frac{S_{\eta_x}(b_{n,x}t)}{a_{n}}\Big)_{t\geq 0}\Rightarrow \Big(V_x(t)\Big)_{t\geq 0}, \quad n\to\infty,
\]
for example, we may take $b_{n,x}:= \inf\{k\geq 1 : a_{k,x}>a_n\}.$

Hence, we obtain 
\[
\frac{Y(nt)}{a_{n}}=  \frac{S_\xi(  {T^{\text{normal}}(nt) } )}{a_n}+\sum_{x\in A} \frac{S_{\eta^{(x)}}({b_{n,x}}\frac{T^{(x)}(nt)}{b_{n,x}})}{a_n}.
\]

Denote by $\tau_n$ the instant of the $n$-th visit of $Y$ to $A$. Since $Y$ is recurrent by our assumptions, $\tau_n < \infty$ a.s., and the sequence $(Y(\tau_n))$ is a Markov chain on $A$. 

Recall our assumption that all states of ${\mathbb Z} \setminus A$ communicate for the chain $(Y(n))$ and that $Y$ exits $A$ with probability $1$. Consequently, if $A$ is finite, there exists a unique stationary distribution $(\pi_x, x \in A)$ for the Markov chain $(Y(\tau_n))$. By the law of large numbers for Markov chains, we have the a.s. limit of ratios:
\begin{equation}
    \label{eq:proport_times}
\lim_{n\to\infty}\frac{T^{(x)}(n)}{T^{(y)}(n)}=
\lim_{n\to\infty}\frac{\sum_{k=1}^n{\mathds{1}}_{\{Y(\tau_k)=x\}}}{\sum_{k=1}^n{\mathds{1}}_{\{Y(\tau_k)=y\}}} = \frac{\pi_x}{\pi_y}, \quad x,y \in A. 
\end{equation}

It follows from \eqref{eq:proport_times} that if $\left(\frac{T^{(x)}(nt)}{b_{n,x}}\right)_{t \geq 0}$ converges to a non-degenerate process $(L(t))_{t \geq 0}$, then $\left(\frac{T^{(y)}(nt)}{b_{n,x}}\right)_{t \geq 0}$ converges to $\left(\frac{\pi_y}{\pi_x}L(t)\right)_{t \geq 0}$. 

The processes $V_x, x \in A,$ must be strictly stable Lévy processes; let their corresponding indices be denoted by $\beta_x$. It is well known that if $\beta_x < \beta_y$, then $b_{n,y} = o(b_{n,x})$ as $n \to \infty$. Furthermore, a linear combination of independent stable processes with the same index $\beta$ is itself a $\beta$-stable process. Therefore, our hypothesis is that the limit $(\frac{Y(nt)}{a_n})_{t \geq 0}$ takes the form:
\begin{equation}
    \label{eq:guessLimit}
    Z(t) = U(t) + V(L(t)),
\end{equation} 
where $V$ is a stable Lévy process with index $\beta  = \min_{x \in A} \beta_x$.

 The process $L$ is non-decreasing and can only increase when $Z$ visits 0, since $T^{(x)}$ is non-decreasing and can only increase when $Y$ visits $A$. 
The times between successive visits $(Y(n))$ of any fixed $x\in A$ are positive independent and identically distributed random variables, and $L$ is considered the scaling limit of the number of visits to some $x^*\in A$. Therefore,  $L$ is an inverse subordinator and hence is continuous in $t\geq0.$ Thus, we may expect that $L$ is a continuous additive functional of $Z$ at 0, i.e., a local time of $Z$ at 0. This question is delicate and needs thorough considerations. There are a lot of definitions of a local time: symmetric, right-continuous, semimartingale, Blumenthal-Getoor local time, etc. Usually, all of them are equivalent up to a multiplicative constant, but the choice of  the constant  is not intuitive at all. Generally, the local time of a Markov process is not a continuous function of the process. Convergence towards a local time  of scaling   the number of visits of a random walk to a fixed point is 
  not trivial, even for a simple,  unperturbed RW $S_\xi$ with ${\mathop{\rm P}}(\xi=\pm1)=1/2.$ The general case  needs advance methods, cf. \cite{Borodin_1985_local_time}.

Another crucial consideration involves the following question:   For which  $\beta\in(0,2]$  equation \eqref{eq:guessLimit} has a solution?   Answer: not for all. An informal  explanation is the following. It is well known from results of Feller \cite{feller1957boundary} and Wentzell \cite{Wentcel1956boundary} that the boundary condition at 0 for one-dimensional Brownian motion on $[0,\infty)$ includes a measure $m$ such that $\int_0^\infty (1\wedge x)\; m(\mathrm{d} x)<\infty.$ This gives an idea that the process $V$ should have bounded variation and hence $\beta\in(0,1) $ or $\beta=1$ and $V$ is a linear function. 
In fact, this is true if $\alpha=2$ and $U$ is a Brownian motion (with an appropriate choice of the local time $L$ of $Z$),   see, e.g., \cite{Bobrowski-Pilipenko_2025_Walsh}.  Moreover, if $V(x)=\gamma x$ is a linear function and $L$ is the symmetric local time of $Z$, then the equation has a unique solution iff $|\gamma|\leq 1$, see  \cite{Harrison+Shepp}, and no solution  if $|\gamma|>1$.
If $\alpha\in(1,2)$, then parameter  $\beta$  can only take values in the interval $(0,\alpha-1)$, see  \cite{IPMS_book, IksanovPilipenko2021skewLevy, Dong+Iksanov+Pilipenko:2024+} and Theorem \ref{thm:lim_skew_Levy_walk}  below.
\footnote{We didn't consider the critical case   $\beta=\alpha-1,$ but we have strong arguments that this case is also impossible.} 
   This statement has a deep connection with It\^o's synthesis theory and a description of all jump entrance laws  for stable processes.

%Below we discuss equation \eqref{eq:guessLimit} for models from Examples \ref{ex:1_1} and \ref{ex:1_2}
\begin{example}
    Consider perturbed RW from Example  \ref{ex:1_1}. 
It is well known that the pair
$\left(\frac{S_\xi(nt)}{\sqrt{n}}, \frac{\sum_{k=0}^{[nt]}1_{S_\xi(k)=0}}{\sqrt{n}}\right)_{t\geq 0}$ converges in distribution to $(B(t), L^B_0(t))_{t\geq 0}$, where $B$ is a standard Brownian motion and $L^B _0$ is its symmetric local time at 0 defined by $$L^B_0(t)\coloneqq \lim_{\varepsilon\to0}\frac{1}{2\varepsilon}\int_0^t{\mathds{1}}_{|B(s)|<\varepsilon}\mathrm{d} s. \quad t\geq 0.$$ Hence, the pair $\left(\frac{X(nt)}{\sqrt{n}}, \frac{\sum_{k=0}^{[nt]}{\mathds{1}}_{X(k)=0}}{\sqrt{n}}\right)_{t\geq 0}=\left(\frac{|S_\xi(nt)|}{\sqrt{n}}, \frac{\sum_{k=0}^{[nt]}{\mathds{1}}_{S_\xi(k)=0}}{\sqrt{n}}\right)_{t\geq 0}$ converges in distribution to  $(|B(t)|, L^B_0(t))_{t\geq 0}= (|B(t)|, L^{|B|}_0(t))_{t\geq 0}$.

Consider now representation \eqref{eq:20_representation_PRW} of $Y.$ We have 
$$\eta_{n,0}=1,\qquad S_{\eta_0}(n)=n,\qquad T^{(0)}(n)=\sum_{i=0}^{n-1}{\mathds{1}}_{Y(i)=0}=S_\eta(T^{(0)}(n)).$$ 
The  scaling for $S_\eta$ is $b_n=n$; $V_0(t)=\lim_{n\to\infty}\frac{S_\eta(nt)}{n}=t.$

 Since perturbed random walks  $X$ and $Y$ have the same distribution, we have convergence
 \[
\begin{split}
\left(\frac{Y(nt)}{\sqrt{n}}, \frac{\sum_{k=0}^{[nt]}{\mathds{1}}_{Y(k)=0}}{\sqrt{n}}\right)_{t\geq 0} 
&\overset{d}{=} \left(\frac{X(nt)}{\sqrt{n}}, \frac{\sum_{k=0}^{[nt]}{\mathds{1}}_{X(k)=0}}{\sqrt{n}}\right)_{t\geq 0} \\
&\Rightarrow (|B(t)|, L^{|B|}_0(t))_{t\geq 0} =: (Z(t) , L^{Z}_0(t))_{t\geq 0}.
\end{split}
\]
 % \[
 % \left(\frac{Y(nt)}{\sqrt{n}}, \frac{\sum_{k=0}^{[nt]}{\mathds{1}}_{Y(k)=0}}{\sqrt{n}}\right)_{t\geq 0}\overset{d}{=}\left(\frac{X(nt)}{\sqrt{n}}, \frac{\sum_{k=0}^{[nt]}{\mathds{1}}_{X(k)=0}}{\sqrt{n}}\right)_{t\geq 0}\Rightarrow (|B(t)|, L^{|B|}_0(t))_{t\geq 0}=: (Z(t) , L^{Z}_0(t))_{t\geq 0} \]

Note that $S_\xi(T^{\text{normal}}(n))=Y(n)-\sum_{k=0}^{n}{\mathds{1}}_{Y(k)=0}$.
So we have convergence of triples
\[
\left(\frac{Y(nt)}{\sqrt{n}}, \frac{\sum_{k=0}^{[nt]}{\mathds{1}}_{Y(k)=0}, }{\sqrt{n}}, \frac{S_\xi(T^{\text{normal}}(nt))}{\sqrt{n}}\right)_{t\geq 0} \Rightarrow   (Z(t) , L^{Z}_0(t), Z(t)- L^{Z}_0(t))_{t\geq 0}.
\]
On the other hand we know that $(\frac{S_\xi(T^{\text{normal}}(nt))}{\sqrt{n}})_{t\geq 0}\Rightarrow (W(t))_{t\geq 0}$, where $W$ is a standard Brownian motion due to Donsker's theorem and   \eqref{eq:conv-stab-proc}. Therefore,  representation \eqref{eq:guessLimit} for reflected Brownian motion takes the well-known form, e.g. \cite{PilipenkoRSDE}:
\begin{equation}
    \label{eq:refl_Z}
    Z(t)=W(t)+L^Z_0(t), t\geq 0,
\end{equation}
where $L^Z_0$ is a symmetric local time of $Z$ at 0. 
\end{example}
\begin{example}\label{expl:skew2}
Consider the model from  Example  \ref{ex:1_2}. % and forget for a while that we have already known that the scaling limit exists.  
It can be seen that $(|Y(n)|)$ has the same distribution as a random walk from Example \ref{ex:1_1}. This implies that  the sequence $\Big\{\big(\frac{Y(nt)}{\sqrt{n}}, \frac{\sum_{k=0}^{[nt]}{\mathds{1}}_{Y(k)=0}, }{\sqrt{n}}\big)_{t\geq 0}, n\geq 1\Big\}$ is  weakly relatively compact;    any of its  limit point $(Z,L)$ is a continuous process; the process   $L$ is the symmetric local time of $Z.$

Recall that   
${\mathop{\rm P}}(\eta_{n,0}= 1)=p, {\mathop{\rm P}}(\eta_{n,0}= -1)=q$. So, $\lim_{n\to\infty}\frac{S_\eta(nt)}{n}=(p-q)t=:V_0(t) $ a.s. Hence, any limit process $Z$ satisfies the equation
\begin{equation}
    \label{eq:skew_SDE_Z}
    Z(t)=W(t)+\gamma L^Z_0(t), t\geq 0,
\end{equation}
where $\gamma=(p-q)\in[-1,1],$ $W$ is a standard Brownian motion, $L^Z_0$ is a symmetric local time of $Z$ at 0. 

We would like to   exclaim that $Z$ is a  solution to the stochastic differential equation  \eqref{eq:skew_SDE_Z} with a local time that appeared in the paper by Harrison and Shepp  \cite{Harrison+Shepp}, and that this equation has a unique strong solution. At this point, it is necessary to be cautious. Let $({\mathcal F}_t)=({\mathcal F}^Z_t)$ be a filtration generated by $Z $ augmented by sets of zero-probability.   We know that $W$ is a Brownian motion. Is $W$ an $({\mathcal F}_t)$-Brownain motion? Since $L^Z_0$ is a local time of $Z,$ then $W(t)=Z(t)-L^Z_0(t)$ is ${\mathcal F}_t$-measurable random variable, that is, $W$ is $({\mathcal F}_t)$-adapted process. To treat  \eqref{eq:skew_SDE_Z} as an SDE  we need to ensure that increments $W(t+s)-W(t), s\geq 0,$ are independent of ${\mathcal F}_t$ for every $t\geq 0.$ This can be shown easily for considered model. However, we do not know   arguments showing that there is a unique solution to \eqref{eq:skew_SDE_Z} for some Brownian motion $W$ if we do not assume that $W$ is an $({\mathcal F}_t)$-Brownian motion.
 \end{example}

%%%%%%%%%%%%%%%%%%%%%%%%%%%%%%%%%%%%%%%%%%%%%%%%%%%%%%%%%%%%%%%%%%%%%%%%%%%%%%%%%%%%%%%

%%%%%%%%%%%%%%%%%%%%%%%%%%%%%%%%%%%%

\section{Reflecting screens.} \label{sec:reflection}
In this section, we assume that if the perturbed random walk enters  the negative half-line, perturbations act to pull the walk back into the positive half-line. These results will be useful for studying perturbed random walks on ${\mathbb Z}$ and on graphs, see \S \ref{sec:spider}.
We also assume that ${\mathop{\rm E}}\xi=0$ and ${\mathop{\rm Var}} \xi\in(0,\infty)$, making it reasonable to expect the limiting process will be non-negative and behave like Brownian motion when positive. As suggested by our earlier guess for the limit process \eqref{eq:guessLimit}, an additional non-decreasing term is likely needed to push the limit process upward  whenever it touches zero. This idea aligns with Skorokhod's reflection problem and its extensions. For further details and definitions, see \cite[\S 1.1.1]{IPMS_book}.

\begin{definition}
\label{defn:SKORreflProblem}
Let  $f\in {\mathcal D} , f(0)\geq0$. A pair  $g,l\in{\mathcal D}$ is called to be a solution of the Skorokhod problem    for $f$ if
\[
g(t)=f(t)+l(t), \ t\geq0,
\]
where $g $ is a non-negative function, $l$ is non-decreasing, $l(0)=0$, and $l$ may increase only at the moments when $g$ visits 0, i.e.,
  \begin{equation}
\label{eq:S4}
  \int\limits^\infty_0{\mathds{1}}_{g(s)>0}dl(s)=0.
\end{equation}
%\end{enumerate}
\end{definition}
It is well known that for any $f\in{\mathcal D}$ with $f(0)\geq 0,$ there is a unique solution to the Skorokhod problem. The solution is given by the formula
\begin{equation}
     \begin{aligned}  \label{eq:Skor_sol}
g(t)= \Gamma(f)(t)\coloneqq f(t) - \min_{s\in[0,t] }(f(s))\wedge 0,\\
l(t)=- \min_{s\in[0,t] }(f(s))\wedge 0.
\end{aligned} 
\end{equation}
We will call the map $\Gamma:{\mathcal D}\to{\mathcal D}$ the Skorokhod map. Important property of    the Skorokhod map is its continuity.

\begin{example}[Lindley's model and heavy traffic limits]
Consider a server tasked with processing information bits. At each discrete time tick, data arrives for processing, and  the server processes a portion of this data (first receiving, then processing). Let $\xi^+_k$ denote the quantity of arrivals at the $k$th tick, and $\xi^-_k$ represent the quantity of bits that can be processed at that time. Define $\xi_k \coloneqq  \xi^+_k - \xi^-_k$, and assume that $(\xi_k)$ is a sequence of independent and identically distributed random variables with ${\mathop{\rm E}}\xi = 0$ and ${\mathop{\rm Var}} \, \xi  = \sigma^2 \in (0, \infty)$. The queue length, or the number of bits awaiting processing at the $k$th tick, evolves according to Lindley’s recursion:
\[
\widetilde Y(k+1)=(\widetilde Y(k)+\xi_k)\vee 0, k\geq 0.
\]
It can be seen that   the process $\widetilde Y(t), t\geq 0,$ is a solution of the Skorokhod problem for $S_\xi$ (we recall that we set $\widetilde Y(t)\coloneqq \widetilde Y(n), t\in[n,n+1)$).    
Therefore, $\frac{\widetilde Y(nt)}{\sqrt{n}}=\Gamma (\frac{S_\xi(n\cdot)}{\sqrt{n}})(t), t\geq 0$. Since Skorokhod map is continuous in ${\mathcal D}$, convergence
\[\Big(\frac{\widetilde Y(nt)}{\sqrt{n}}\Big)_{t\geq0}\Rightarrow \Big(\sigma B(t)\Big)_{t\geq0}, n\to\infty,\]
in ${\mathcal D}$ is a simple corollary of Donsker's theorem and continuous mapping theorem. 

Note, that $\widetilde Y$ is not a perturbed RW with membrane $A={\mathbb Z}\setminus {\mathbb N}_0$ in the sense of \eqref{eq:X_repr} or \eqref{eq:20_representation_PRW}. Indeed, if $\widetilde Y(n)+\xi_{n+1}<0,$ i.e., if $\widetilde Y$ ``wishes'' to jump into negative half-line, then we ``skip'' this step and jump  to 0.
This scenario was discussed in Lemma \ref{lem:equivalent perturbed with skips}. The corresponding perturbed RW, where $Y$ has transition probabilities 
$ p_{x,y}=\begin{cases}
    {\mathop{\rm P}}(\xi=y-x), & x\geq 0;\\
    {\mathds{1}}_{y=0}, & x<0.
    \end{cases}$

\end{example}
\begin{example}[Oscillating random walk]\label{expl:Oscillating} Consider random walk $X$ with transition probabilities
\begin{equation}
    {\mathop{\rm P}}(X(n+1)=x+y | X(n)=x)=\begin{cases}
    {\mathop{\rm P}}(\xi=y), & x>0;\\
    {\mathop{\rm P}}(\eta=y), &  x\leq 0;
\end{cases}
\label{eq:oscillating}
\end{equation}
 where ${\mathop{\rm E}}\xi=0, \quad {\mathop{\rm Var}}\, \xi=\sigma^2\in(0,\infty),\quad {\mathop{\rm E}}\eta\in(0,\infty).$

Let us  apply Skorokhod's representation theorem, Donsker's theorem, and construct a probability space, a sequence of 
copies $S_\xi^n\overset{d}{=}{S_\xi},  \quad n\in {\mathbb N}$, and  a Brownian motion $B,$  defined on this probability space, such that

\begin{equation}
    \label{eq:conv_Skor_repr1}\Big(\frac{S_\xi^n(nt)}{\sigma\sqrt{n}}\Big)_{t\geq 0}\to \Big(B(t)\Big)_{t\geq 0}  \quad n\to\infty,
\end{equation} 
almost surely in ${\mathcal D}.$ Since the limit is continuous, convergence in ${\mathcal D}$ is equivalent to the locally uniform convergence.

Without loss of generality we may assume that the original random variables $(\eta_k)_{k\geq 0}$ are also defined at this probability space. So 
\begin{equation}
    \label{eq:conv_Skor_repr2} \Big(\frac{S_\eta (nt)}{n}\Big)_{t\geq 0}\to \Big(({\mathop{\rm E}}\eta) t\Big)_{t\geq 0}, \quad n\to\infty, \quad \text{a.s.}
\end{equation} 
locally uniformly on compact sets.

To analyze convergence of the scaling limits, let us use representation \eqref{eq:20_representation_PRW} for copies $Y_n$ of $X$ that are functions  of  $S_\xi^n$ and $S_\eta $:
\[
\frac{Y_n(nt)}{\sigma\sqrt{n}}= \frac{S_\xi^n(T^\text{normal}_n(nt))}{\sigma\sqrt{n}}  + \frac{S_\eta (T^{\text{critical}}_n(nt))}{\sigma\sqrt{n}} = \frac{S_\xi^n(T^\text{normal}_n(nt))}{\sigma\sqrt{n}}  + \frac{S_\eta (\sigma\sqrt{n}\frac{T^{\text{critical}}_n(nt) }{\sigma\sqrt{n}})}{\sigma\sqrt{n}} ,
\]
where $T^\text{normal}_n(k)=\sum_{i=0}^{k-1}{\mathds{1}}_{Y_n(k)>0},\quad T^{\text{critical}}_n(k)=\sum_{i=0}^{k-1}{\mathds{1}}_{Y_n(i)\leq 0}.$

We are going to prove convergence for every $\omega$ from the set where we have convergence \eqref{eq:conv_Skor_repr1} and \eqref{eq:conv_Skor_repr2}. The result will not use any specific property of a Brownian motion except of continuity. 

Let us denote $B_n(t)\coloneqq \frac{S_\xi^n(nt)}{\sigma\sqrt{n}}$,  $H_n(t)\coloneqq \frac{S_\eta(\sigma \sqrt{n}t)}{\sigma \sqrt{n}}$.

Hence
\begin{equation}
    \label{eq:oscillating repr}
    \frac{Y_n(nt)}{\sigma\sqrt{n}}=B_n(\frac{T_n^\text{normal}(nt)}{n}) + H_n(\frac{T^{\text{critical}}_n(nt)} {\sigma\sqrt{n} }).
\end{equation}

The following estimate is important part of the proof.
For   any $N$ and $\omega$:
\begin{equation}\begin{aligned}
    \label{eq:estimate_S}
    & S_\eta(T^{\text{critical}}(N)-1)   \leq -\min_{0\leq i  \leq T^\text{normal}(N)}(S_\xi(i)\wedge 0) \\
     &\leq  S_\eta(T^{\text{critical}}(N))+ \max_{0\leq i\leq T^\text{normal}(N)} (\xi_{i+1}-\xi_{i })\wedge0+ \max_{0\leq i\leq j\leq T^{\text{critical}}(N)}(S_\eta(i)-S_\eta(j)).   
     \end{aligned}\end{equation}
 
For the detailed proof of these inequalities see   \cite{Pilipenko+Sarantsev:2024, IPMS_book}. The proof is inductive and relies on a careful examination of the construction of $Y$. The result has a deterministic nature and holds for any  sequences $(\xi_k), (\eta_k)$.

Hence 
\begin{equation}
\label{eq:ineqS}
\begin{aligned}  
H_n\left(\frac{T^{\text{critical}}_n(nt)-1}{\sigma\sqrt{n}}\right) 
&\leq \min_{s\in[0,t]}\Big(B_n\Big(\frac{T_n^\text{normal}(ns)}{n}\Big)\wedge 0\Big) \\
&\leq H_n\left(\frac{T^{\text{critical}}_n(nt)}{\sigma\sqrt{n}}\right) + \max_{s\in[0,t]}|B_n(s)-B_n(s-)| \\
&\quad + \max_{0\leq t_1\leq t_2\leq t} \left(H_n\left(\frac{T^{\text{critical}}_n(nt_1)}{\sigma\sqrt{n}}\right)-H_n\left(\frac{T^{\text{critical}}_n(nt_2)}{\sigma\sqrt{n}}\right)\right).
\end{aligned}
\end{equation}
% \begin{equation}\begin{aligned}  
% & H_n(\frac{T^{\text{critical}}_n(nt)-1} {\sigma\sqrt{n} })   \leq  \min_{s\in[0,t]}\Big(B_n(\frac{T_n^\text{normal}(ns)}{n})\wedge 0\Big)\leq \\
% &H_n(\frac{T^{\text{critical}}_n(nt)} {\sigma\sqrt{n} })+ \max_{s\in[0,t]}|B_n(s)-B_n(s-)|+\max_{0\leq t_1\leq t_2\leq t} \Big(H_n(\frac{T^{\text{critical}}_n(nt_1) } {\sigma\sqrt{n} })-H_n(\frac{T^{\text{critical}}_n(nt_2) } {\sigma\sqrt{n} })\Big)\label{eq:ineqS}
% \end{aligned}\end{equation}

 Since 
 \[
\begin{split}
\limsup_{n\to\infty}\Big(-\min_{s\in[0,t]}\Big(B_n\Big(\frac{T_n^\text{normal}(ns)}{n}\Big)\wedge 0 \Big)\Big)
&\leq \lim_{n\to\infty}\Big(-\min_{s\in[0,t]}B_n(s)\wedge 0\Big) \\
&=-\min_{s\in[0,t]}(B (s)\wedge 0)=-\min_{s\in[0,t]} B (s)<\infty.
\end{split}
\]
% \[
% \limsup_{n\to\infty}\Big(-\min_{s\in[0,t]}(B_n(\frac{T_n^\text{normal}(ns)}{n})\wedge 0 \Big)\leq
%  \lim_{n\to\infty}\Big(-\min_{s\in[0,t]}B_n(s)\wedge 0\Big)=-\min_{s\in[0,t]}(B (s)\wedge 0)=-\min_{s\in[0,t]} B (s)<\infty,
% \]
the first inequality implies that 
$\limsup_{n\to\infty}H_n(\frac{T^{\text{critical}}_n(nt)-1} {\sigma\sqrt{n} })<\infty.$ This and \eqref{eq:conv_Skor_repr2} implies
\begin{equation}\begin{aligned}\label{eq:991}
\limsup_{n\to\infty} \frac{T^{\text{critical}}_n(nt)} {\sigma\sqrt{n} }<\infty,
\end{aligned}\end{equation}
$\lim_{n\to\infty}\frac{T^{\text{critical}}_n(nt)}{n}=0$ and $\lim_{n\to\infty}\frac{T^\text{normal}_n(nt)}{n}=t$  for any $t\geq 0$. Since all functions are non-decreasing and the limits are continuous,  convergence is  locally uniform in $t$. At first, this implies locally uniform convergence
\[
\lim_{n\to\infty} B_n(\frac{T_n^\text{normal}(nt)}{n}) = B(t).
\]
At second, \eqref{eq:991} and \eqref{eq:conv_Skor_repr2} imply that
\[
\lim_{n\to\infty}\max_{0\leq t_1\leq t_2\leq t} \Big(H_n(\frac{T^{\text{critical}}_n(nt_1) }{\sigma\sqrt{n}})-H_n(\frac{T^{\text{critical}}_n(nt_2) }{\sigma\sqrt{n}})\Big)=0.
\]
Hence 
\[
\lim_{n\to\infty}   H_n(\frac{T^{\text{critical}}_n(nt)}{\sigma\sqrt{n}})= - \min_{s\in[0,t]} B(s) 
\]
uniformly in compact sets.

Thus, we obtain locally uniform convergence of $\frac{Y_n(nt)}{\sigma\sqrt{n}}$ to $B(t)- \min_{s\in[0,t]}B(s)$ as $n\to\infty.$ Moreover, we obtain a limit theorem for the number of visits to ${\mathbb Z}\setminus {\mathbb N}$:
\[
\lim_{n\to\infty}\frac{T^{\text{critical}}_n(nt)}{\sigma\sqrt{n}} = ({\mathop{\rm E}}\eta)^{-1}  \min_{s\in[0,t]}B(s).
\]
Returning  to the original sequence $X$ we obtain convergence of pairs
\[
\Big(\frac{X(nt)}{\sigma \sqrt{n}}, \frac{T^{\text{critical}}(nt)}{\sigma \sqrt{n}}\Big)_{t\geq0}\Rightarrow \Big( \Gamma (B)(t),  ({\mathop{\rm E}}\eta)^{-1}  \min_{s\in[0,t]}B(s)\Big)_{t\geq0}, n\to\infty.
\]

 \end{example}

   We stress that  the result from  the previous example was proved as an application of Skorokhod's representation theorem, Donsker's theorem, the law of large numbers, and real analysis arguments. 
    Consideration  of more  involved models are similar, see \cite[\S3.2.1]{IPMS_book}. Informally, the reflected Brownian motion appears in models, where the $n$th jump from the membrane ${\mathbb Z}\setminus {\mathbb N}$ is  negligible relative to the accumulated sum of jumps from  the membrane as $n\to\infty.$ 
If this assumption fails, say if jumps from the membrane are heavy-tailed, then  other scaling limits may appear, see next example.

\begin{example}\label{expl:refl_Levy}
    Consider an oscillating random walk given by \eqref{eq:oscillating}, where $\xi $ is the same as in the previous example and $\eta$ belongs to the domain of attraction of a positive $\alpha$-stable law with $\alpha\in(0,1).$ Similarly to the previous example we apply Skorokhod's representation theorem and construct copies of sequences such that
    $B_n(t)\coloneqq \frac{S_\xi^n(nt)}{\sigma\sqrt{n}}$,  $H_n(t)\coloneqq \frac{S_\eta^n(b_n t)}{\sigma \sqrt{n}}, t\geq 0$  converge almost surely in  ${\mathcal D}$ to an independent Brownian motion $B(t), t\geq 0$ and an $\alpha$-stable subordinator $H(t), t\geq 0$, respectively. Here $(b_n)$ is a well-chosen sequence of positive numbers. It is well known that $(b_n)$ is a regularly varying sequence with parameter $\alpha/2$. 

Below we will consider only $\omega$ from the corresponding set of probability 1 from the Skorokhod theorem.

We still have representation \eqref{eq:oscillating repr}, and inequalities \eqref{eq:estimate_S}, \eqref{eq:ineqS}. Since $B$ is continuous  and $H$ is increasing we get
\begin{equation}
\label{eq:1031}
\limsup_{n\to\infty} H_n(\frac{T^{\text{critical}}_n(nt)-1}{b_n})\leq M(t)\coloneqq  -\min_{s\in[0,t]}B(s)\leq  \liminf_{n\to\infty}
H_n(\frac{T^{\text{critical}}_n(nt)}{b_n}). 
\end{equation}
According to \cite[Proposition 6.5, Chapter 3]{Ethier+Kurtz:1986}, if a sequence of (non-random) c\`adl\`ag  functions $(f_n)$ converges to $f$ in ${\mathcal D}$, then for any $t> 0$ and sequence $(t_n)$ with $\lim_{n\to\infty} t_n=t$,  the limit points  of the sequence $(f_n(t_n))$ can be either $ f(t-)$ or $f(t)$. In particular, if $t$ is a point of continuity of $f,$ then $\lim_{n\to\infty} f_n(t_n)=f(t).$
Using this result with   inequality \eqref{eq:1031}, it follows that 
\begin{equation}
     \label{eq:1037}
     \limsup_{n\to\infty}  \frac{T^{\text{critical}}_n(nt)}{b_n}<\infty.
\end{equation}
Moreover, for any fixed $t$, any limit point $L(t)$ of the sequence $( \frac{T^{\text{critical}}_n(nt)}{b_n})_{n\geq 1}$ must satisfy the inequality
\begin{equation}
    \label{eq:ineq_M_H}
    H(L(t)-)\leq M(t)\leq  H(L(t)) .
\end{equation}
Since $H$ is strictly increasing and unbounded, there is a unique $L(t)$ that satisfies this property, 
$L(t)=H^{-1}(M(t))$.  Due to uniqueness we have   convergence $ \lim_{n\to\infty}  \frac{T^{\text{critical}}_n(nt)}{b_n}=L(t).$  

The process $H^{-1}$ is continuous    because $H$ is strictly increasing. So, $L$ is continuous as well and we have even the locally uniform convergence $ \lim_{n\to\infty}  \frac{T^{\text{critical}}_n(nt)}{b_n}=H^{-1}(M(t))$ since all processes are non-decreasing and the limit is continuous.  

Recall that $(b_n)$ is regularly varying sequence of index $\alpha/2,$ so $b_n=o(n), n\to \infty.$ The bound \eqref{eq:1037} implies locally uniform convergence 
\begin{equation*}
    \lim_{n\to\infty}\frac{T^{\text{critical}}_n(nt)}{n}=0, \quad \lim_{n\to\infty}\frac{T^\text{normal}_n(nt)}{n}=t, 
\quad \lim_{n\to\infty} B_n(\frac{T_n^\text{normal}(nt)}{n})=B(t).
\end{equation*}

Although $\lim_{n\to\infty} H_n=H$ and $ \lim_{n\to\infty} \frac{T^{\text{critical}}_n(n\cdot )}{b_n}=H^{-1}\circ M $ in ${\mathcal D}$, proving the convergence of compositions
\begin{equation}
	\label{eq:conv_H_composition_critical}
	  \lim_{n\to\infty} H_n\Big( \frac{T^{\text{critical}}_n(n\cdot )}{b_n}\Big)=H\circ H^{-1}\circ  M   \text{ in }  {\mathcal D},
\end{equation}
requires additional reasoning. This is because composition is generally not continuous in the space of c\`adl\`ag functions. For detailed proofs, see \cite{Pilipenko+Sarantsev:2024} or \cite[\S 3.2.2, \S 3.2.3]{IPMS_book}.
The only   property of trajectories that  was used in the corresponding proof is the absence of common points of discontinuity of $H$ and $M^{-1}$, which is true for a.a. $\omega$ because processes $H$ and $M^{-1}$ are independent subordinators. 

It is well known that convergence $\lim_{n\to\infty} f_n=f$ and $\lim_{n\to\infty} g_n=g$ in ${\mathcal D}$ implies convergence $\lim_{n\to\infty} (f_n+g_n) =f+g$ in ${\mathcal D}$
if $f$ and $g$ do not have same points of discontinuity, e.g. \cite[ Theorem 4.1]{Whitt:1980}. This proves a.s. convergence in ${\mathcal D}$ of $\Big(\frac{Y_n(nt)}{\sigma\sqrt{n}}\Big)_{t\geq 0}=\Big( B_n (\frac{T_n^\text{normal}(nt)}{n} ) +  H_n ( \frac{T^{\text{critical}}_n(nt )}{b_n} )\Big)_{t\geq 0}$ and consequently implies convergence in distribution:
\begin{equation}
    \label{ex:fla_gen_Skor}
    \left(\frac{Y (n t)}{\sigma\sqrt{n}}\right)_{t\geq 0}\Rightarrow \left(B(t)+ H\circ H^{-1} \circ M(t)\right)_{t\geq 0}, \quad n\to\infty.
\end{equation}
    
\end{example}

\textbf{Discussion and possible generalizations.}

The concept of dividing dynamics into ‘normal’ and ‘critical’ (or ‘compensating’) modes, and employing representations such as \eqref{eq:oscillating repr}, was introduced in \cite{Pilipenko+Sarantsev:2024}, where a deterministic limit theorem for switching dynamics was established. Once Skorokhod’s representation theorem is applied, all arguments become primarily deterministic, and functional limit theorems follow from the continuity of maps in the Skorokhod space ${\mathcal D}$ of càdlàg functions. 

This methodology is effective for triangular arrays. For instance, consider a sequence of random walks $(X_n(k))_{k\geq 0}$ with transition probabilities:
\[
 p^n_{x,y}=\begin{cases}
    {\mathop{\rm P}}(\xi_n=y-x), & x\geq 0;\\
    {\mathop{\rm P}}(\eta_n =y-x), & x<0, 
    \end{cases}
\]
where $\xi_n$ and $\eta_n$ are integer-valued random variables. Assume there exist scaling sequences $(a_n)$ and $(b_n)$ such that the partial sums $\frac{S_{\xi_n}(n\cdot)}{a_n}$ and $\frac{S_{\eta_n}(n\cdot)}{b_n}$ converge in distribution as $n\to\infty$ to a continuous Lévy process $B$ and a strictly increasing subordinator $H$, respectively. 
There are two key points to address. First, the time spent in the critical mode must satisfy $\lim_{n\to\infty}\frac{T^{\text{critical}}_n(nt)}{n}=0$. While this holds readily in the stationary case where $\xi_n\overset{d}{=} \xi$ and $\eta_n\overset{d}{=} \eta$, it is not guaranteed for general sequences $\xi_n, \eta_n$. In certain degenerate cases, sticky-reflected Brownian motions may emerge as the limiting process; see \cite{Pilipenko+Sarantsev:2024, IPMS_book, Pilipenko+Prikhodko:2020}.

The second point concerns the convergence of compositions in \eqref{eq:conv_H_composition_critical}. This convergence is straightforward when $H$ is linear, but requires a more detailed analysis if $H$ is a jump process, particularly regarding the matching of discontinuity points; see \cite{Pilipenko+Sarantsev:2024, IPMS_book}.

\section{Scaling limits of random walks on a spider}\label{sec:spider}
A random walk on ${\mathbb R}$ can be viewed as a random walk on a graph consisting of two rays with a common vertex. As we will see in this section, the transition from a graph with two rays to a star graph with $d$ rays does not add any complexity; indeed, the general case is often more convenient in terms of notation. The constructions of the previous section regarding reflection (the one-dimensional case) serve as the ``building blocks'' for a general graph.

Let $\xi_i$, $1\leq i\leq d,$ be zero-mean integer-valued random variables with finite variances $v_i^2={\mathop{\rm Var}}\, \xi_i \in(0,\infty).$ Consider a Markov chain $(R(n), l(n))_{n\geq 0}$ on the state space $E\coloneqq {\mathbb Z} \times \{1,\dots,d\}$ satisfying the following properties:
\begin{itemize}
    \item ${\mathop{\rm P}}((R(n+1), l(n+1))=(x+y,i) \mid (R(n), l(n))=(x,i)) = {\mathop{\rm P}}(\xi_i=y)$ if $x\in{\mathbb N}$;
    \item ${\mathop{\rm P}}(R(n+1)\in{\mathbb N} \mid R(n)\leq 0) = 1$;
    \item The states in ${\mathbb N}\times\{1,\dots,d\}$ communicate.
\end{itemize}

The dynamics of $(R,l)$ can be interpreted as a random walk of a particle on the union of $d$ horizontal lines. The second coordinate $l(n)$ is the number of the line (label) and the first coordinate $R(n)$ (radius) is the position of the particle on the line at time $n$. If the radius is positive and the particle is on the $i$-th line, it jumps along the horizontal direction independently of the past with a distribution $\xi_i$. If the first component reaches a non-positive value, then at the next step, the particle may change lines and the radius becomes positive.

Associated with the Markov chain $(R,l)$ is a random walk on the coordinate axes of a $d$-dimensional space defined by:
\begin{equation}
    \label{eq:R-to_x}
    X(n)\coloneqq (X_1(n),\dots,X_d(n))\coloneqq (R(n){\mathds{1}}_{l(n)=1},\dots, R(n){\mathds{1}}_{l(n)=d}), \quad n\geq 0.
\end{equation}
By ${\mathcal Z}^d$ and ${\mathcal N}^d$, we denote the intersection of the coordinate axes with ${\mathbb Z}^d$ and ${\mathbb N}^d$, respectively. The goal of this section is to investigate the scaling limits of $X$. At the end, we obtain scaling limits for a perturbed random walk on ${\mathbb Z}$ as a simple corollary of the general result.

Note that only one coordinate of $X$ may be non-zero. If $X_i(n)\in{\mathbb N}$, then the next jump is in the direction of the $i$-th axis and the distribution of the jump is $\xi_i$. If $X(n)\notin{\mathcal N}^d$, then $X$ enters ${\mathcal N}^d$ at the next step, $X(n+1)\in{\mathcal N}^d$. 

The sequence $X$ is not, in general, a Markov chain on ${\mathcal Z}^d$; the Markov property may fail at $X(n)=0$ since $X(n+1)$ may depend on the line label $l(n)$. On the other hand, each Markov chain on ${\mathcal Z}^d$ that has i.i.d. jumps in ${\mathcal N}^d$ and enters ${\mathcal N}^d$ immediately after exiting can be represented via formula \eqref{eq:R-to_x} with some $(R,l)$.

To construct a representation of $X$ similar to \eqref{eq:20_representation_PRW}, we introduce the exit (from ${\mathcal N}^d$) and entrance (to ${\mathcal N}^d$) stopping times:
\begin{equation*}\begin{aligned}
\sigma_0 &= \inf\{k \geq 0 : R(k) \leq 0\}, \\
\sigma_{n+1} &= \inf\{k > \sigma_n : R(k) \leq 0 \}, \quad n \geq 0, \\
\tau_n &= \inf\{k > \sigma_n : R(k) \in {\mathbb N} \} = \sigma_n + 1, \quad n \geq 0.
\end{aligned}\end{equation*}
The moment $\sigma_n$ is the instant when $R$ visits the non-positive part for the $n$-th time. We define the following counting processes:
\begin{equation*}\begin{aligned}
T^{\text{normal},i}(n) &\coloneqq \sum_{k=0}^{n-1}{\mathds{1}}_{X_i(k)\in{\mathbb N}}, \quad 
T^{\text{exit},i}(n) \coloneqq \sum_{k=0}^{n-1}{\mathds{1}}_{R(k)\leq 0,\, l(k)=i}, \\
T^{\text{critical}}(n) &\coloneqq \sum_{k=0}^{n-1}{\mathds{1}}_{R(k)\leq 0} = \sum_{i=1}^d T^{\text{exit},i}(n).
\end{aligned}\end{equation*}
Set
\begin{equation*}\begin{aligned}
\xi_{n,i} &\coloneqq X_i((T^{\text{normal},i})^{-1}(n-1)) - X_i((T^{\text{normal},i})^{-1}(n-1)-1), \quad n \geq 1, \\
S_{\xi_i}(n) &\coloneqq X_i(0) + \sum_{k=1}^n \xi_{k,i}.
\end{aligned}\end{equation*}
The variable $\xi_{n,i}$ is the size of the $n$-th jump from the positive part of the $i$-th ray. The strong Markov property implies that $(\xi_{n,i})_{n\geq 1}, 1 \leq i \leq d$ are independent sequences of i.i.d. random variables, $\xi_{n,i} \overset{d}{=} \xi_i$.

%%%%%%%%%%%%%%%%%%%%%%%%%%%%%%%%%%%%%%%%%%%%%%%%%%%%%%%%%%%%

%%%%%%%%%%%%%%%%%%%%%%%%%%%%%%%%%%%%%%%%%%%%%%%%%%%%%%%

{ Denote
\begin{equation*}\begin{aligned}
H_i(N)\coloneqq \sum_{k=0}^{N-1}\Big(X_i(\sigma_k+1)-X_i(\sigma_k)\Big)= \sum_{k=0}^{N-1}\Big(X_i(\tau_k)-X_i(\sigma_k)\Big).
\end{aligned}\end{equation*}
Then 
\begin{equation}\begin{aligned}
    \label{eq:WalshRW_repr}
X_i(n)=S_{\xi_i}(T^{\text{normal},i}(n))+ H_i(T^{\text{critical}}(n)).
\end{aligned}\end{equation}
}

Notice that $T^{\text{exit},i}(n)$ does not exceed the number of descending ladder epochs of the unperturbed random walk $S_{\xi_i}$ within the interval $[0, n]$. Similarly to the proof of Lemma \ref{lem:neglidg_critical}, we obtain the a.s. convergence:
\begin{equation}
    \label{eq:crit-to0}
\lim_{n\to\infty}\sum_{i=1}^d\frac{T^{\text{exit},i}(n)}{n}=0.
\end{equation} 

\begin{lemma}
    \label{lem: compactness of T}
    The sequence of stochastic processes $\left\{\left(\frac{T^{\text{normal},i}(nt)}{n}\right)_{1\leq i\leq d, \ t\geq 0}, n\geq 1\right\}$ is weakly relatively compact in ${\mathcal D}$, and any limit point $( T^{\text{normal},i}_\infty(t))_{1\leq i\leq d, \ t\geq 0}$ is a continuous process such that $\sum_{i=1}^d T^{\text{normal},i}_\infty(t)=t$ for all $t\geq 0$.
\end{lemma}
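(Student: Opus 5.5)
The plan is to prove relative compactness through a uniform Lipschitz bound, and then identify the sum of the limits by using \eqref{eq:crit-to0}.

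\emph{Step 1: a Lipschitz bound.} Each process $t\mapsto T^{\text{normal},i}(nt)/n$ is non-decreasing, starts at $0$, and has jumps of size at most $1/n$. Indeed, $T^{\text{normal},i}(k+1)-T^{\text{normal},i}(k)\in\{0,1\}$. Hence for $0\le s\le t$ we have
\[
0\le \frac{T^{\text{normal},i}(nt)-T^{\text{normal},i}(ns)}{n}\le (t-s)+\frac1n .
\]
So the modulus of continuity $w'$ in the $J_1$ sense, and even the uniform modulus, of each coordinate is bounded by $\delta+1/n$, deterministically. The values are also bounded by $t$. Therefore the standard compactness criterion in ${\mathcal D}$ (see \cite{Billingsley, Ethier+Kurtz:1986}) gives tightness of each coordinate. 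Moreover, since the bound is uniform in $\omega$, any limit point has continuous paths, and these paths are $1$-Lipschitz. Tightness of the $d$-dimensional vector in ${\mathcal D}([0,\infty),{\mathbb R}^d)$ follows as well: all coordinates are uniformly close to continuous functions, so joint compactness holds with the product modulus. Equivalently, I would replace the step functions by their linear interpolations. These are $1$-Lipschitz, hence relatively compact in $C$ by Arzel\`a--Ascoli, and they differ from the originals by at most $1/n$ uniformly.

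\emph{Step 2: the identity for the sum.} By definition,
\[
\sum_{i=1}^d T^{\text{normal},i}(n)+T^{\text{critical}}(n)=n ,
\]
because at each time $k$ either $R(k)\in{\mathbb N}$ with label $l(k)=i$ for exactly one $i$, or $R(k)\le 0$. Consequently,
\[
\sup_{t\in[0,T]}\Big|\sum_{i}\frac{T^{\text{normal},i}(nt)}{n}-t\Big|\le \frac{T^{\text{critical}}(nT)}{n}+\frac1n .
\]
This uses the monotonicity of $T^{\text{critical}}$. By \eqref{eq:crit-to0} the right-hand side tends to $0$ a.s.

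\emph{Step 3: passing to the limit.} Take a subsequence converging in distribution. By Theorem \ref{thm:SkorokhodRepresentation} we may assume a.s. convergence in ${\mathcal D}$ to a continuous limit. Since the limit is continuous, this convergence is locally uniform. The sum map is then continuous at the limit, and the uniform estimate of Step 2 transfers to the copies, because it is a statement about the distribution of the process. Together these yield $\sum_i T^{\text{normal},i}_\infty(t)=t$ for all $t$ a.s.

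\emph{Main subtlety.} The only delicate point is the a.s. statement \eqref{eq:crit-to0}, which rests on the ladder-epoch comparison with the walks $S_{\xi_i}$. This is taken as given from the text. Everything else is deterministic.
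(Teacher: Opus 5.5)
Your proof is correct and follows the paper's argument. The deterministic increment bound $T^{\text{normal},i}([nt])-T^{\text{normal},i}([ns])\le [nt]-[ns]$ gives tightness with continuous, $1$-Lipschitz limit points, and the identity $T^{\text{critical}}(n)+\sum_{i=1}^d T^{\text{normal},i}(n)=n$ together with \eqref{eq:crit-to0} identifies the sum of the limits. You add details the paper leaves implicit: the $1/n$ correction off the grid, joint tightness of the vector, and the passage to the limit, where strictly speaking what transfers to the Skorokhod copies is the convergence in probability of $\sup_{t\le T}\bigl|\sum_i T^{\text{normal},i}(nt)/n-t\bigr|$ to $0$, which is all you need.
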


\begin{proof}
    Weak relative compactness and continuity of the limit process follow from the observation that for every $s=\frac{k}{n}$ and $t=\frac{l}{n}$, we have control of the modulus of continuity:
\[
\left| \frac{T^{\text{normal},i}(nt)}{n}-\frac{T^{\text{normal},i}(ns)}{n}\right|\leq |t-s|.
\]
 The proof of $\sum_{i=1}^d T^{\text{normal},i}_\infty(t)=t$ follows from \eqref{eq:crit-to0} and the fact that 
 $$T^{\text{critical}}(n)+\sum_{i=1}^d T^{\text{normal},i}(n)=n.$$
\end{proof}

{
It can be shown that the following counterpart of \eqref{eq:estimate_S} holds true:
\begin{equation}
\begin{aligned}
    \label{eq:estimate_S_Walsh1}
     H_i(T^{\text{critical}}(N)-1)
  &    \leq -\min_{0\leq k \leq N}\left(S_{\xi_i}(T^{\text{normal},i}(k))\wedge 0\right)  \\& 
     \leq  
     H_i(T^{\text{critical}}(N))+ \max_{0\leq k\leq T^{\text{normal},i}(N)}|\xi_{k+1,i}-\xi_{k,i }|.    
\end{aligned}
\end{equation}
}
We leave the verification of this fact to the reader.

The plan of investigation is as follows. {Assume that a law of large numbers is satisfied for $(H_i(N))_{N\geq 1}, 1\leq i\leq d$. Namely, suppose that there are positive constants $c_i, 1\leq i\leq d,$ such that:
\begin{equation}\begin{aligned}
\label{eq:LLN_Hi}
 \lim_{N\to\infty}\frac{H_i( N )}{ v_iN}=c_i, \quad \text{a.s.}
\end{aligned}\end{equation}
This implies the locally uniform convergence:
\begin{equation}\begin{aligned}
\label{eq:LLN_Hi_uniform}
 \forall T>0, \quad \lim_{N\to\infty}\max_{t\in[0,T]}\left|\frac{H_i( Nt )}{ v_iN}-c_it\right|=0, \quad \text{a.s.}
\end{aligned}\end{equation}
}

Then, applying inequality \eqref{eq:estimate_S_Walsh1}, Donsker's theorem, and Skorokhod's representation theorem, we will show that the difference between the l.h.s. and r.h.s. of \eqref{eq:estimate_S_Walsh1} is $o(\sqrt{N})$ as $N\to\infty$. Finally, we will apply \eqref{eq:LLN_Hi_uniform} to \eqref{eq:WalshRW_repr}, obtain an equation for the scaling of $T^{\text{critical}}$ and $T^{\text{normal},i}$, and utilize Theorem \ref{thm:baryaktar}. 

 \begin{theorem}
     \label{thm:pert_Walsh}
   Assume that \eqref{eq:LLN_Hi} holds true. Then the sequence of stochastic processes $\left(\frac{X_1(nt)}{ v_1\sqrt{n}},\dots,\frac{X_d(nt)}{ v_d\sqrt{n}}\right)_{t\geq 0}$ converges in distribution to the Walsh Brownian motion started from 0 with parameters $p_i=\frac{c_i}{c_1+\dots+c_d}, 1\leq i\leq d$. 
 \end{theorem}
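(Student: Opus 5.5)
We follow the plan sketched before the statement. By Skorokhod's representation theorem (Theorem \ref{thm:SkorokhodRepresentation}), Donsker's theorem for each of the independent walks $S_{\xi_i}$, and Lemma \ref{lem: compactness of T}, we pass to a subsequence and to a probability space on which, almost surely and locally uniformly,
\[
W^n_i(t)\coloneqq\frac{S_{\xi_i}(nt)}{v_i\sqrt n}\to W_i(t),\qquad \frac{T^{\text{normal},i}(nt)}{n}\to T_i(t),
\]
where $W=(W_1,\dots,W_d)$ is a standard $d$-dimensional Wiener process with $W(0)=0$. The processes $T_i$ are continuous and non-decreasing, $T_i$ is $1$-Lipschitz, and $\sum_i T_i(t)=t$. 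The independence of the coordinates of $W$ follows from the strong Markov property, since the sequences $(\xi_{n,i})_n$ are independent across $i$. By \eqref{eq:LLN_Hi_uniform} we may also assume that $H_i(Nt)/(v_iN)\to c_it$ locally uniformly along any $N\to\infty$, on the same full-measure set.

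The central step is to identify the critical time. Dividing \eqref{eq:estimate_S_Walsh1} with $N=[nt]$ by $v_i\sqrt n$, we note that the maximal jump term $\max_{k\le n}|\xi_{k+1,i}-\xi_{k,i}|/\sqrt n\to0$ a.s. along the Skorokhod copies, because the limit $W_i$ is continuous. The middle term equals $-\min_{s\le t}(W^n_i(T^{\text{normal},i}(ns)/n)\wedge0)$, which converges to $L_i(T_i(t))=-\min_{u\le T_i(t)}(W_i(u)\wedge0)$ by uniform convergence of compositions; here all limits are continuous, so this is elementary. The left inequality bounds $H_i(T^{\text{critical}}(nt)-1)/(v_i\sqrt n)$ from above, which forces $T^{\text{critical}}(nt)=O(\sqrt n)$ since $c_i>0$ (as in \eqref{eq:991}). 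Setting $\ell_n(t)\coloneqq T^{\text{critical}}(nt)/\sqrt n$, \eqref{eq:LLN_Hi_uniform} then gives $H_i(T^{\text{critical}}(nt))/(v_i\sqrt n)=c_i\ell_n(t)+o(1)$ uniformly on compacts, and the same holds with the argument shifted by $-1$. Sandwiching yields $c_i\ell_n(t)\to L_i(T_i(t))$ for every $i$. Hence $\ell_n\to\ell$ with $\ell$ continuous, and
\[
\frac{L_i(T_i(t))}{c_i}=\frac{L_j(T_j(t))}{c_j},\quad\text{equivalently}\quad \frac{L_i(T_i(t))}{p_i}=\frac{L_j(T_j(t))}{p_j},\qquad 1\le i,j\le d.
\]

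Next we pass to the limit in \eqref{eq:WalshRW_repr}. This gives
\[
\frac{X_i(nt)}{v_i\sqrt n}=W^n_i\Big(\frac{T^{\text{normal},i}(nt)}{n}\Big)+\frac{H_i(T^{\text{critical}}(nt))}{v_i\sqrt n}\to W_i(T_i(t))+L_i(T_i(t))=W_i^{\text{refl}}(T_i(t))
\]
locally uniformly. The collection $(T_i)$ satisfies the balance condition \eqref{eq:balance_locTime_Walsh's process1} together with $\sum_iT_i(t)=t$, so Theorem \ref{thm:baryaktar} (uniqueness) identifies $(T_i)$ as the unique time changes. Consequently the limit is $(W^{\text{refl}}_i\circ T_i)_i$, which is Walsh's process with parameters $p_i$. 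Because the limit is the same for every subsequence, the whole sequence converges.

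The main obstacle is the measurability and adaptedness issue hidden in applying Theorem \ref{thm:baryaktar}. The theorem is pathwise: uniqueness is almost sure for a given $W$. We therefore need only that $W$ is a standard Brownian motion and that $(T_i)$ solves the balance system pathwise, and both hold. The remaining care lies in verifying the deterministic inequality \eqref{eq:estimate_S_Walsh1}, which is left to the reader, and in checking that the $-1$ shift and the maximal jump term are negligible uniformly in $t$. I would handle the latter by using monotonicity in $t$ together with continuity of the limits, which upgrades pointwise convergence to locally uniform convergence, exactly as in Example \ref{expl:Oscillating}.
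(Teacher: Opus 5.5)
Your proposal is correct and follows essentially the same route as the paper: a subsequence and Skorokhod coupling of $(S_{\xi_i},T^{\text{normal},i})$, the sandwich \eqref{eq:estimate_S_Walsh1} giving $T^{\text{critical}}(nt)/\sqrt n\to L_i(T_i(t))/c_i$ and hence the balance equation, passage to the limit in \eqref{eq:WalshRW_repr}, and identification of the limit through Theorem \ref{thm:baryaktar}. One point needs tightening. The a.s.\ law of large numbers for the original $H_i$ does not automatically hold ``on the same full-measure set'' for the Skorokhod copies, since each $n$ has its own copy. As the paper does, include $H_i(\sqrt n\,\cdot)/(v_i\sqrt n)$ in the joint coupling; this is allowed because its limit $c_it$ is deterministic. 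Then realize $X$ on the new space so that \eqref{eq:WalshRW_repr} holds for the copies.
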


%%%%%%%%%%%%%%%%%%%%%%%%%%%%%%%%%%%%%%%%%%%%%%%%%%%%%%%%%%%%%%%%%%%%%
\begin{proof}
It is sufficient to prove that any subsequence of $\left\{\left(\frac{X_1(n\cdot)}{ v_1\sqrt{n}},\dots,\frac{X_d(n\cdot)}{ v_d\sqrt{n}}\right)\right\}_{n\geq 1}$ contains a sub-subsequence that converges to the Walsh process. By Lemma \ref{lem: compactness of T}, and without loss of generality, we assume that the sequence of stochastic processes $\left(\frac{S_{\xi_i}(nt)}{\sqrt{n}},\frac{T^{\text{normal},i}(nt)}{n}, \frac{H_{i}(\sqrt{n}t)}{ v_i\sqrt{n}}\right)_{1\leq i\leq d, \ t\geq 0}$ converges in distribution to a continuous process as $n\to\infty$.

By Skorokhod's representation theorem, we construct a sequence of copies 
$(S^n_{\xi_i}, T^{\text{normal},i}_n,$ $H_{n,i})_{1\leq i\leq d}$ and copies of $X_n$ 
on a common probability space such that, for each $1 \leq i \leq d$ and for all $T>0$, 
the following holds almost surely:  
\begin{equation}
\label{eq:DonskerBMi}
\begin{split}
\lim_{n\to\infty}\max_{t\in[0,T]} & \left(\Big|\frac{S^n_{\xi_i}(nt)}{ v_i\sqrt{n}}- W_i(t)\Big| + \Big|\frac{T^{\text{normal},i}_n(nt)}{n}-T^{\text{normal},i}_\infty(t)\Big| \right. \\
&\left. +\Big|\frac{H_{n,i}(\sqrt{n}t)}{ v_i\sqrt{n}}-c_{i}t\Big| \right) = 0,
\end{split}
\end{equation}
\begin{equation}
    \label{eq:sum T}
    \sum_{i=1}^d T^{\text{normal},i}_\infty(t) = t, \quad t\geq 0,
\end{equation}
where $W_1,\dots,W_d$ are independent one-dimensional standard Brownian motions, and $T^{\text{normal},i}_\infty$ are non-decreasing, non-negative continuous processes. More precisely, we apply Skorokhod's theorem to the joint processes $\left(\frac{S_{\xi_i}(nt)}{\sqrt{n}},\frac{T^{\text{normal},i}(nt)}{n}, \frac{H_{i}(\sqrt{n}t)}{ v_i\sqrt{n}}\right)_{1\leq i\leq d, \ t\geq 0}$, then construct the sequence $(X_n)_{n \geq 1}$ on an extended probability space using regular conditional probabilities, ensuring that the representation \eqref{eq:WalshRW_repr} holds. 

From \eqref{eq:estimate_S_Walsh1}, it follows that:
\begin{equation*}\begin{aligned}
    \frac{H_{n,i}(T^{\text{critical}}_n(nt)-1)}{ v_i\sqrt{n}}
 &   \leq -\min_{0\leq k \leq nt}\left(\frac{S^n_{\xi_i}(T^{\text{normal},i}_n(k))}{ v_i\sqrt{n}}\wedge 0\right)  \\
 &   \leq \frac{H_{n,i}(T^{\text{critical}}_n(nt))}{ v_i\sqrt{n}}+ \max_{0\leq k\leq nt}\frac{|\xi_{k+1,i}-\xi_{k,i }|}{ v_i\sqrt{n}}.   
\end{aligned}\end{equation*}
Following the reasoning in Example \ref{expl:Oscillating}, we obtain $\limsup_{n\to\infty}\frac{T^{\text{critical}}_n(nt)}{ \sqrt{n}} < \infty$ a.s. for $t\geq 0$, which yields
\[
    \lim_{n\to\infty} \sup_{t\in[0,T]}\left| \frac{H_{n,i}(T^{\text{critical}}_n(nt))}{ v_i\sqrt{n}} + \min_{0\leq k \leq nt}\left(\frac{S^n_{\xi_i}(T^{\text{normal},i}_n(k))}{ v_i\sqrt{n}}\wedge 0\right)\right| = 0 \quad \text{a.s.}
\]
for every $T>0$. Combining these estimates with \eqref{eq:DonskerBMi}, we obtain the almost sure convergences:
\begin{equation}\begin{aligned}
\label{eq:1143}
\lim_{n\to\infty} \max_{t\in[0,T]}\left|\frac{S_{\xi_i}^n(T^{\text{normal},i}_n(nt))}{ v_i\sqrt{n}} - W_i( T^{\text{normal},i}_\infty(t))\right| = 0, \\
\lim_{n\to\infty} \max_{t\in[0,T]}\left| \frac{H_{n,i}(T^{\text{critical}}_n(nt))}{ v_i\sqrt{n}} - M_i( T^{\text{normal},i}_\infty(t)) \right| = 0,
\end{aligned}\end{equation}
where $M_i(t) \coloneqq -\min_{s\in[0,t]} W_i(s)$. Consequently, 
\begin{align*}
 \lim_{n\to\infty}\max_{t\in[0,T]}\Big|\frac{X_{n,i}(nt)} { v_i\sqrt{n}} - \left( W_i(T^{\text{normal},i}_\infty(t)) + M_i( T^{\text{normal},i}_\infty(t)) \right) \Big| &= 0, \\
 \lim_{n\to\infty}\max_{t\in[0,T]}\Big|\frac{X_{n,i}(nt)} { v_i\sqrt{n}} - W_i^{refl}(T^{\text{normal},i}_\infty(t)) \Big| &= 0.
\end{align*}

From \eqref{eq:1143} and \eqref{eq:DonskerBMi}, there exists a scaling limit for the time spent in critical mode, $\lim_{n\to\infty}\frac{T^{\text{critical}}_n(nt)}{\sqrt{n}} = \frac{M_i( T^{\text{normal},i}_\infty(t))}{c_i}$. Since this is independent of the index $i$, we obtain the balance equation:
\begin{equation}
    \label{eq:balance}
    \frac{M_i( T^{\text{normal},i}_\infty(t))}{c_i} = \frac{M_j( T^{\text{normal},j}_\infty(t))}{c_j}, \quad 1\leq i,j\leq d.
\end{equation}

To finalize the proof, it suffices to observe that the process $(W_i^{refl}(T^{\text{normal},i}_\infty(t)))_{t\geq 0, 1\leq i\leq d}$, where $T^{\text{normal},i}_\infty$ and $M_i$ satisfy \eqref{eq:sum T} and \eqref{eq:balance}, is a Walsh process with parameters $p_i = \frac{c_i}{c_1+\dots+c_d}$, as stated in Theorem \ref{thm:baryaktar}.
\end{proof}

%%%%%%%%%%%%%%%%%%%%%%%%%%%%%%%%%%%%%%%%%%%%%%%%%%%%%

\begin{remark}
We give sufficient conditions that ensure \eqref{eq:LLN_Hi}.
Assume that a.s. there are positive non-random limits 
\begin{align}
\label{eq:LLN_MC}
a_i &\coloneqq \lim_{N\to\infty} ( v_iN)^{-1}\sum_{k=1}^N  |R(\sigma_{k})|{\mathds{1}}_{l(\sigma_{k})=i}, \nonumber \\
b_i &\coloneqq \lim_{N\to\infty} ( v_iN)^{-1}\sum_{k=1}^N   R(\tau_{k }){\mathds{1}}_{l(\tau_{k})=i}, \quad 1\leq  i\leq d.
\end{align}
Then \eqref{eq:LLN_Hi} holds true with $c_i= a_i+b_i, 1\leq i\leq d.$

    % We give   sufficient conditions that ensure  \eqref{eq:LLN_Hi}.
    %  Assume that a.s. there are   positive non-random limits 
    % \begin{align}
    %     \label{eq:LLN_MC}
    %      a_i\coloneqq \lim_{N\to\infty} ( v_iN)^{-1}\sum_{k=1}^N  |R(\sigma_{k})|{\mathds{1}}_{l(\sigma_{k})=i}, \quad  b_i\coloneqq \lim_{N\to\infty} ( v_iN)^{-1}\sum_{k=1}^N   R(\tau_{k }){\mathds{1}}_{l(\tau_{k})=i}, \quad 1\leq  i\leq d.
    % \end{align}
    % Then \eqref{eq:LLN_Hi} holds true with $c_i= a_i+b_i, 1\leq i\leq d.$ 

      Note that $(R^{\text{exit}}(k), l^{\text{exit}}(k))_{k\geq 1}\coloneqq (R(\sigma_k), l(\sigma_k))_{k\geq 1}$ 
and $(R^{\text{entrance}}(k), l^{\text{entrance}}(k))_{k\geq 1}\coloneqq\  $  $  (R(\tau_k), l(\tau_k))_{k\geq 1} = 
(R(\sigma_k+1), l(\sigma_k+1))_{k\geq 1}$ are Markov chains. It is known, e.g. \cite[p. 126]{IPMS_book}, that
\begin{equation}\begin{aligned}\label{eq: RW to 0}
\inf_{x\in {\mathbb N}}{\mathop{\rm P}}_x\big(S_\xi(\sigma_{{\mathbb Z}\setminus {\mathbb N}})=0\big)>0,
\end{aligned}\end{equation} 
for any 1-arithmetic one-dimensional integer-valued  random walk $S_\xi$ with ${\mathop{\rm E}}\xi=0, {\mathop{\rm Var}} \, \xi \in(0,\infty)$,
where $\sigma_{{\mathbb Z}\setminus {\mathbb N}}\coloneqq \inf\{n\geq 0 : \ S_\xi(n)\leq0\}.$ Since we assume that all states  ${\mathbb N}\times\{1,\dots,d\}$ communicate  for the chain $(R,l)$, formula \eqref{eq: RW to 0} implies that Markov chains $(R^{\text{exit}}, l^{\text{exit}})$ and $(R^{\text{entrance}}, l^{\text{entrance}})$ possess unique invariant  probability distributions $\pi^{\text{exit}}$ and $\pi^{\text{entrance}}$. 
 
By the law of large numbers for Markov chains, almost sure limits  in  \eqref{eq:LLN_MC}  exists if  the first moments of $R$ under  $\pi^{\text{exit}}$ and $\pi^{\text{entrance}}$ are finite.  In this case  parameters of the Walsh process are
\begin{equation}\begin{aligned}\label{eq:param_Walsh1}
p_i=  \frac{  v_i^{-1}({\mathop{\rm E}}_{\pi^{\text{exit}}}|R^{\text{exit}}(0)|{\mathds{1}}_{l^{\text{exit}}(0)=i}+{\mathop{\rm E}}_{\pi^{\text{entrance}}}R^{\text{entrance}}(0){\mathds{1}}_{l^{\text{entrance}}(0)=i}) }{\sum_{j=1}^m   v_j^{-1}({\mathop{\rm E}}_{\pi^{\text{exit}}}|R^{\text{exit}}(0)|{\mathds{1}}_{l^{\text{exit}}(0)=j}+{\mathop{\rm E}}_{\pi^{\text{entrance}}}R^{\text{entrance}}(0){\mathds{1}}_{l^{\text{entrance}}(0)=j})}.
\end{aligned}\end{equation}
Sufficient condition  ensuring ${\mathop{\rm E}}_{\pi^{\text{exit}}}|R^{\text{exit}}(0)|+{\mathop{\rm E}}_{\pi^{\text{entrance}}}R^{\text{entrance}}(0)<\infty$ is, for example, the following
\begin{equation}
    \label{eq:ass_pavl}
    \exists C>0\quad \forall i,\ 1\leq i\leq m\quad \forall x\in{\mathbb Z}\setminus {\mathbb N}\  \ \quad 
{\mathop{\rm E}}_{(R(0), l(0))=(x,i)}(R(1) )\leq C(1+|x|),
\end{equation}
see \cite{PavlyukevichPilipenko2023, IPMS_book}.
\end{remark}

%%%%%%%%%%%%%%%%%%%%%%%%%%%%%%%%%%%%%%%%%%%%%%%%%%
 
%%%%%%%%%%%%%%%%%%%%%%%%%%%%%%%%%%%%%%%%%%%%%

\begin{example}
    Let $X$ be a perturbed random walk on ${\mathbb Z}$ with a membrane $A=\{-m,\dots,m\}$ and transition probabilities 
    \begin{equation*}
        p_{x,y}=\begin{cases}
        {\mathop{\rm P}}(\xi=y-x), & x\notin A;\\
        {\mathop{\rm P}}(\eta_x=y), & x\in A,
    \end{cases}
    \end{equation*}
    where ${\mathop{\rm E}}\xi=0, {\mathop{\rm Var}} \, \xi =\sigma^2\in(0,\infty),$ and ${\mathop{\rm E}}|\eta_x|<\infty$ for $x\in A$. Without loss of generality, we assume that ${\mathop{\rm P}}(X(n+1)\notin A \mid X(n)=x)=1$ for $x\in A$ (see Lemma \ref{lem:equivalent perturbed with skips} and Remark \ref{remk:skips}), which means ${\mathop{\rm P}}(| \eta_x|>m)=1$ for all $x \in A$. We associate with $X$ an auxiliary Markov chain $(R,l)$ on ${\mathbb Z}\times \{-,+\}$ that satisfies the assumptions of Theorem \ref{thm:pert_Walsh}.

    The transition probabilities of $(R,l)$ are defined as follows:
    If $x\in {\mathbb N}$, then
    \begin{equation}
        {\mathop{\rm P}}\Big((R(n+1), l(n+1))=(x+y,i) \mid (R(n), l(n))=(x,i)\Big)=\begin{cases}
            {\mathop{\rm P}}(\xi=y), & i=+, \\
            {\mathop{\rm P}}(\xi=-y), & i=-.
        \end{cases}
    \end{equation}
    If $x\leq -2m-1$, the particle jumps over the membrane area:
    \begin{equation}
        \begin{aligned}
            {\mathop{\rm P}}\Big((R(n+1), l(n+1))=(-2m-x,-) \mid (R(n), l(n))=(x,+)\Big) &= 1, \\
            {\mathop{\rm P}}\Big((R(n+1), l(n+1))=(-2m-x,+) \mid (R(n), l(n))=(x,-)\Big) &= 1.
        \end{aligned}
    \end{equation}
    And if $-2m\leq x\leq 0$   and $y\geq 0$, then:
    \begin{equation}
        \begin{aligned}
            {\mathop{\rm P}}\Big((R(n+1), l(n+1))=(y,+) \mid (R(n), l(n))=(x,+)\Big) &= {\mathop{\rm P}}(\eta_{x+m}=y+m), \\
            {\mathop{\rm P}}\Big((R(n+1), l(n+1))=(y,-) \mid (R(n), l(n))=(x,+)\Big) &= {\mathop{\rm P}}(\eta_{x+m}=-y-m), \\
            {\mathop{\rm P}}\Big((R(n+1), l(n+1))=(y,-) \mid (R(n), l(n))=(x,-)\Big) &= {\mathop{\rm P}}(\eta_{-x-m}=-y-m), \\
            {\mathop{\rm P}}\Big((R(n+1), l(n+1))=(y,+) \mid (R(n), l(n))=(x,-)\Big) &= {\mathop{\rm P}}(\eta_{-x-m}=y+m).
        \end{aligned}
    \end{equation}

    Define the map $\phi:{\mathbb Z}\times \{-,+\}\to{\mathbb Z}$ as
    \[
    \phi(r,i)=i(r-m)=\begin{cases}
        (r-m), & i=+,\\
        -(r-m), & i=-.
            \end{cases}
    \]
    Note that $\phi:{\mathbb N} \times \{-,+\} \to{\mathbb Z}\setminus A$ is a bijection. 
    %rata
    Consider the sequence $\bar X(n)\coloneqq \phi(R(n),l(n)), n\geq 0$. By construction, $\bar X$ satisfies:
    \begin{itemize}
     \item If $(\bar X(n-1) >m, \bar X(n ) >m ) $ or $(\bar X(n-1) <-m, \bar X(n )<-m)   $  or  $(\bar X(n-1) \in A, |\bar X(n )|\notin A),   $  then $\bar X(n+1)-\bar X(n)\overset{d}{=} \xi$.
       \item  If $\bar X$ jumps over $A$ (i.e., $\bar X(n-1) >m, \bar X(n ) <-m  $ or $\bar X(n-1) <-m, \bar X(n )>m$), it remains stationary for one step ($\bar X(n+1)=\bar X(n)$).  
        \item If $ \bar X(n )=x\in A,$ then   $\bar X(n+1) \overset{d}{=} \eta_x$. 
                   \end{itemize}

These properties imply that the jumps of $\bar{X}$ follow the distribution of $\xi$ until $\bar{X}$ either hits the membrane $A$ or jumps over it. If $\bar{X}$ hits a state $x \in A$, the subsequent jump follows the distribution of $\eta_x$, after which the increments of $\bar{X}$ return to the distribution of $\xi$ until the next encounter with $A$. If $\bar{X}$ jumps over $A$, it remains stationary for exactly one step, and subsequently continues jumping with the distribution of $\xi$ until it hits or jumps over $A$ again.

If  $\bar X(0)\overset{d}{=} X(0) $ and  we remove the idle moments after jumps over   $A$, the resulting sequence will have the same distribution as $X$. Similarly to representations considered in Lemma \ref{lem:equivalent perturbed with skips} the last sentence means \[
(\bar X  (\bar T^{-1}(n)-1))_{n\geq 0} \overset{d}{=} (X(n))_{n\geq 0},
\]
where $\bar T(n)=n-\sum_{k=0}^{n-1}\left({\mathds{1}}_{\bar X(k-1)>m, \bar X(k)<-m }+{\mathds{1}}_{\bar X(k-1)<-m, \bar X(k)>m }\right).$

According to Theorem \ref{thm:pert_Walsh}, the processes $\left(\frac{R(nt){\mathds{1}}_{l(nt)=+}}{\sigma\sqrt{n}}, \frac{R(nt){\mathds{1}}_{l(nt)=-}}{\sigma\sqrt{n}}\right)_{t\geq 0}$ converge in distribution to a two-dimensional Walsh process $(W_+(t), W_-(t))_{t\geq 0}$ with certain parameters $p_\pm$.
     Thus, $(W_+(t)- W_-(t))_{t\geq 0}$ is a skew Brownian motion with parameter $\gamma=p_+-p_-$. Consequently, $\big(\frac{\bar X(nt)}{\sqrt{n}}\big)_{t\geq 0}$ converges to this skew Brownian motion. Since $\big(\frac{\bar T (nt)}{n}\big)_{t\geq 0}$ converges locally uniformly to $(t)_{t\geq 0}$ a.s., the convergence of $\big(\frac{ X(nt)}{\sqrt{n}}\big)_{t\geq 0}$ to the same limit follows.
\end{example}

\textbf{Discussion and possible generalizations.}

\begin{itemize}
    \item We never used in the proof the fact that $X$ is an integer-valued Markov chain. It seems plausible to assume that \eqref{eq:ass_pavl} and the absolute continuity of the distributions of $\xi_i, 1\leq i\leq d,$ imply the existence of limits in \eqref{eq:LLN_Hi}.

    \item Consider a collection of sequences $(R^{(n)}(k),l^{(n)}(k))_{k\geq 0}$ that have the same transition probabilities as $(R(k), l(k))_{k\geq 0}$, but may have different initial conditions. Construct $(X^{(n)}(k))_{k\geq 0}$ similarly to \eqref{eq:R-to_x}. Assume that $X^{(n)}(0) \xrightarrow{{\mathop{\rm P}}} x \in E.$ Notice that the distribution of entrance and exit Markov chains depends on $n$, requiring a modification of \eqref{eq:LLN_Hi_uniform}. It was proved in \cite{PavlyukevichPilipenko2023} that \eqref{eq:ass_pavl} is sufficient for the convergence of scaling limits to the Walsh process started from $x$. The idea was as follows: the scaling of the random walk stopped at the first instant of crossing 0 converges to the Brownian motion on the corresponding ray stopped at hitting 0. After the first crossing, formula \eqref{eq: RW to 0} ensures mixing conditions and the law of large numbers. 

    \item Assume a collection of sequences $(R^{(n)}(k),l^{(n)}(k))_{k\geq 0}$ is such that their transition probabilities depend on $n$ and the corresponding random variables $\xi_i^{(n)}$ satisfy
    \[
    \Big(\frac{S_{\xi_1^{(n)}}(n\cdot)}{ v_1\sqrt{n}},\dots,\frac{S_{\xi_d^{(n)}}(n\cdot)}{ v_d\sqrt{n}}\Big) \Rightarrow (W_1,\dots, W_d), \quad n\to\infty,
    \]
    where $W_1,\dots, W_d$ are independent Wiener processes. We also assume that $X^{(n)}(0) \xrightarrow{{\mathop{\rm P}}} x \in E.$ It would be interesting to find a wide class of sufficient conditions assuring convergence to a Walsh process. We stress that almost all our reasoning was based on Skorokhod's representation theorem, non-random convergence results, and the characterization of a Walsh Brownian motion as in Theorem \ref{thm:baryaktar}. The only important points of the proof were the law of large numbers \eqref{eq:LLN_Hi_uniform} or \eqref{eq:LLN_MC}, and checking the small amount of time spent in the non-positive parts of the lines, see \eqref{eq:crit-to0}. Finding counterparts of these conditions may be a much simpler question than proving a functional limit theorem for RW on graphs.

    \item The proof of Theorem \ref{thm:pert_Walsh} presented in this paper is new. The method is based on the approach proposed in \cite{ChulakovPilipenko}, where deterministic switching systems and a generalized Skorokhod map on a graph were proposed. The previous proof of the theorem (under condition \eqref{eq:ass_pavl}), see \cite{PavlyukevichPilipenko2023, PavlyukevichPilipenko2022,  IPMS_book}, was based on a characterization of the martingale problem of a Walsh process, see Theorem \ref{thm:Walsh}. It was verified that limit points of the sequence $(\frac{S_{\xi_i}(T^{\text{normal},i}(nt))}{ v_i\sqrt{n}})$ are continuous martingales; however, the verification of \eqref{e:brM} and the association of the process $L$ with a limit of $(\sum_{i=1}^d\frac{H_{i}(T^{\text{critical}}(nt))}{ v_i\sqrt{n}})$ were technically demanding.

    There are other methods for the investigation of perturbed random walks in particular cases. For example, in \cite{NgoPeigne, Vo+Peigne:2023}, the classical approach (based on weak relative compactness and the convergence of finite-dimensional distributions) was applied. These works consider an oscillating RW on integers where the membrane consists of a single point. In the absence of explicit formulas for transition probabilities, their study required elegant methods combining renewal theory, local limit theorems, and other tools. However, extending these techniques to the general case may encounter significant technical difficulties. For a semigroup approach to the study of perturbed random walks, see \cite{MinlosZhizhina}. Other probabilistic methods, based on couplings, excursions, and similar techniques, have been explored in \cite{EnriquesKifer, PilipenkoPrykhodkoUMZh, pilipenkosakhanenko2015limit, LambertSimatos, mijatovic2022limit, Yano2008convergence, Yano2015functional}.

    \item Let us discuss difficulties that may arise in the proof of Theorem \ref{thm:pert_Walsh} if we assume that jumps from the non-positive parts of half-axes are heavy-tailed. Analogously to \eqref{eq:LLN_Hi}, assume that there is a sequence $(b_n)$ such that we have convergence in distribution:
  % \begin{equation}\begin{aligned}
% \label{eq:wiener+subord}
% \left(\frac{S_{\xi_1}(nt)}{ v_1\sqrt{n}} ,\dots,\frac{S_{\xi_d}(nt)}{ v_d\sqrt{n}},\frac{H_1( b_nt)}{ v_1\sqrt{n}},\dots, \frac{H_d( b_nt)}{ v_d\sqrt{n}} \right)_{t\geq 0}\Rightarrow
% \left(W_1(t),\dots,W_d(t),V_1(  t),\dots,V_d(  t) \right)_{t\geq 0}, \quad n\to\infty,
% \end{aligned}\end{equation}
% where all coordinates are independent,     $V_i$ are non-degenerate $\alpha$-stable subordinators with $\alpha \in (0,1)$.
\begin{equation}
\label{eq:wiener+subord}
\Biggl( \left(\frac{S_{\xi_i}(nt)}{v_i\sqrt{n}}, \, \frac{H_i(b_nt)}{v_i\sqrt{n}}\right)_{\!1 \leq i \leq d} \,\Biggr)_{t \geq 0} 
\Rightarrow \Biggl( \left( W_i(t), \, V_i(t) \right)_{1 \leq i \leq d} \Biggr)_{t \geq 0}, \quad n\to\infty,
\end{equation}
  where all coordinates are independent, and $V_i$ are non-degenerate $\alpha$-stable subordinators with $\alpha \in (0,1)$.

   Generally, the Markov chains $(X_i(\sigma_n))$ and $(X_i(\tau_n))$ for $1\leq i\leq d$ and $n\in{\mathbb N}$ are dependent and depend on the sequences $(\xi_{n,i})_{n\in{\mathbb N}}, 1\leq i\leq d$. Consequently, the convergence in \eqref{eq:wiener+subord} requires additional arguments. For example, condition \eqref{eq:wiener+subord} is satisfied if the random variables $\xi_i$ are bounded for $1\leq i\leq d$ and 
\[
{\mathop{\rm P}}_{(R(0), l(0))=(x,i)}(R(1){\mathds{1}}_{l(1)=j}>y)\sim \frac{c_{x,i,j}}{y^\alpha \varphi(y)}, \quad y\to+\infty, \ x\leq0, \ 1\leq i,j\leq d,
\]
where $c_{x,i,j}$ are positive constants and $\varphi$ is a function slowly varying at infinity. It would be interesting to identify suitable additional sufficient assumptions on $c_{x,i,j}$ for the case where $\xi_i$ are unbounded, analogous to \eqref{eq:ass_pavl}.

%rata
As in the proof of Theorem \ref{thm:pert_Walsh}, we can construct a new probability space and copies of $S_{\xi_i}$ and $H_i$ such that their scalings converge with probability 1 to a copy of $(W_1,\dots,W_d,V_1,\dots,V_d)$ %which will be
denoted using the same symbols. We can also assume that copies of $X$ are defined on this probability space and, by passing to subsequences if necessary, that we have almost sure limits $\lim_{n\to\infty}\frac{T^{\text{normal},i}_n(nt)}{n}=T^{\text{normal},i}_\infty(t)$, where the continuous processes $(T^{\text{normal},i}_\infty)$ satisfy \eqref{eq:sum T}. 

Proceeding as in Example \ref{expl:refl_Levy} and Theorem \ref{thm:pert_Walsh}, inequality \eqref{eq:estimate_S_Walsh1} and the convergence $\lim_{n\to\infty}\frac{T^{\text{normal},i}_n(nt)}{n}=T^{\text{normal},i}_\infty(t)$ imply that $\lim_{n\to\infty} \frac{T^{\text{critical}}_n(nt)}{b_n}=L(t)$, where $L(t)$ is the unique number satisfying the inequality
\begin{equation}
    \label{eq:ineq_H_i_LM}
    V_i(L(t)-)\leq M_i(T^{\text{normal},i}_\infty(t))\leq V_i(L(t)),
\end{equation}
cf. \eqref{eq:ineq_M_H}. Here $M_i(t)=-\min_{s\in[0,t]}W_i(s)$. Hence, 
\begin{equation}
    \label{eq:eq for L}
     L(t)= V_i^{-1}\circ M_i(T^{\text{normal},i}_\infty(t)), \quad t\geq0, \ 1\leq i\leq d.
\end{equation}
Equation \eqref{eq:eq for L} represents a system for the unknown processes $T^{\text{normal},i}_\infty$ and $L$ that satisfy \eqref{eq:sum T}. It follows from \cite[Lemma 6.2]{Bobrowski-Pilipenko_2025_Walsh} that this system has a unique solution for all $\omega$ such that the processes $V_i^{-1}\circ M_i$ do not share the same intervals of constancy. This condition is satisfied with probability 1 (see the uniqueness proof in Theorem 6.1 of \cite{Bobrowski-Pilipenko_2025_Walsh}), since $(V_i^{-1}\circ M_i)^{-1}= M_i\circ V_i^{-1}$ are independent subordinators. The explicit formula for $L$ and $T^{\text{normal},i}_\infty$ in terms of $M_i$ and $V_i$ is provided in \cite[\S 10]{Bobrowski-Pilipenko_2025_Walsh}. These results are analytical and non-random; they do not depend on specific properties of  stochastic processes.

If we can show that 
\begin{equation}\label{eq:1433}
    \lim_{n\to\infty} \frac{H_{n,i}(b_n\frac{T^{\text{critical}}_n(nt)}{b_n})}{v_i \sqrt{n}}= \lim_{n\to\infty} \frac{H_{n,i}(b_n \cdot )}{v_i \sqrt{n}}\circ \frac{T^{\text{critical}}_n(nt)}{b_n}=V_i(L(t)), \quad 1\leq i\leq d,
\end{equation}
almost surely in ${\mathcal D}$, then we obtain the convergence 
\[
\lim_{n\to\infty} X_{n,i}(t)=W_i(T^{\text{normal}, i}_\infty(t))+V_i(L(t))= (W_i+V_i\circ V_i^{-1}\circ M_i)\circ T^{\text{normal}, i}_\infty(t), \quad 1\leq i\leq d,
\]
almost surely in ${\mathcal D}$. We conjecture that \eqref{eq:1433} holds without additional assumptions; the proof would likely be similar to the $d=1$ case in \cite{Pilipenko+Sarantsev:2024}. This is the most challenging part, as composition is not a continuous map in ${\mathcal D}$, and a concise proof or reference is currently unavailable.

The expression on the right-hand side indicates that the $i$-th coordinate behaves like the process from Example \ref{expl:refl_Levy} in an ``inner time'' $T^{\text{normal}, i}_\infty$. It can be shown that for almost all $t\geq 0$, only one process among $T^{\text{normal}, i}_\infty, 1 \leq i \leq d,$ is changing. 

The process $\big((W_i+V_i\circ V_i^{-1}\circ M_i)\circ T^{\text{normal}, i}_\infty\big)_{1\leq i\leq d}$ is a Feller process on a graph $E_d\subset {\mathbb R}^d$ consisting of the positive coordinate axes. For a comprehensive description of Brownian motions on star graphs, we refer to \cite{Bobrowski-Pilipenko_2025_Walsh}. Specifically, the generator of the limit process is one-half of the second derivative along the axes, restricted to the set of twice continuously differentiable functions $f$ on the graph that are bounded with their second derivative and satisfy the boundary condition at 0:
\[
\int_{E_d}(f(x)-f(0)) m(\mathrm{d} x)=0.
\]
Here $m$ is the Lévy measure of $(V_1,\dots,V_d)$. Note that $m$ is supported on $E_d$, as the $\alpha$-stable subordinators $V_1,\dots,V_d$ are assumed to be independent.\end{itemize}

\section{Skew stable Lévy process. Heavy tailed random walks and perturbations.}\label{sec:skew Levy} 

Unlike local perturbations of Brownian motion, such as reflected Brownian motion, skew Brownian motion, and the Walsh process, perturbations of  stable Lévy processes have been studied much less extensively.   Progress in this area was made relatively recently in article \cite{IksanovPilipenko2021skewLevy}, where a natural construction of a  skew symmetric stable Lévy process was obtained as a limit of local perturbations of the Lévy process.  
The process was described in three ways: (a) through an explicit form of the resolvent, (b) via It\^o's excursion theory and   the entrance law of the  process, (c) by describing  a stochastic differential equation of type \eqref{eq:guessLimit}, where $L$ denotes the  Blumenthal-Getoor local time.  

This section presents a theorem on the convergence of perturbed random walks to a skew stable Lévy process when the membrane is a single point (see \cite{Dong+Iksanov+Pilipenko:2024+}), along with proof ideas and potential generalizations. Details on points (b) and (c) are omitted due to the extensive background required from Itô's excursion theory and its technical complexity.

Before discussing  definitions and   results,  note that SDEs involving a local time and  Lévy noise are rarely  studied, unlike those for Brownian noise.   In fact, even the simple  equation \eqref{eq:skew_SDE_Z}, where a symmetric stable process $U_\alpha$ with $\alpha\in(1,2)$ replaces the Brownian motion $W$, there are  no solutions in the class of Feller processes unless $\gamma= 0$.    This observation is related to the following circumstance, based on It\^o's excursion theory (see precise statements  and details in   \cite[Theorems 2.3.3, 2.3.4]{IPMS_book}).  Specifically, any solution to  \eqref{eq:skew_SDE_Z} is a Markov extension of the process $U_\alpha$ stopped   at 0  with a continuous entrance law (from $0$). However, the  only one non-trivial honest extension  without positive sojourn at 0  that also has a continuous entrance law  is $U_\alpha$ itself.

For a real-valued strong Markov process $U$, denote by $R^{U}_\lambda$ and $V^{U}_\lambda$ the resolvents of $U $ and the processes $U
$ killed upon hitting $0$, respectively, that is, 
\[
R^{U  }_\lambda f(x)\coloneqq {\mathop{\rm E}}_x\int_0^\infty  {\rm e}^{-\lambda t} f(U(t)){\rm d}t,\quad \lambda>0
\]
and  
\[
V^U_\lambda f(x)\coloneqq {\mathop{\rm E}}_x\int_0^{\sigma_0(U )} {\rm e}^{-\lambda t} f(U(t)){\rm d}t,\quad \lambda>0,
\]
where $f$ is a bounded measurable function,   $\sigma_0(U)\coloneqq  \inf\{ t\geq 0\ : \ U
(t)=0\}.$

 Consider   a perturbed  RW $X$ on ${\mathbb Z}$ with a membrane $A\coloneqq \{0\}$ having  transition probabilities 
    \begin{equation}\label{eq:transitions_PRW_Levy}
    p_{x,y}=\begin{cases}
    {\mathop{\rm P}}(\xi=y-x), & x\neq 0;\\
    {\mathop{\rm P}}(\eta =y), & x=0.
\end{cases}
\end{equation}
To avoid trivialities we will assume that all elements of ${\mathbb Z}$ communicate.  Assume that  the random variable $\xi$ belongs to domain of attraction of  symmetric $\alpha$-stable distribution with $\alpha\in(1,2)$. It is well known, e.g., Theorem 2.7 in \cite{Skorokhod:1957}, that there is a sequence $(a_n)$ such that
\begin{equation}\label{eq:conv}
\Big(\frac{S_\xi(nt)}{a_n}\Big)_{t\geq 0}~\Rightarrow~ \Big(U_\alpha(t)\Big)_{t\geq 0},\quad n\to\infty,
\end{equation}
in ${\mathcal D}$, where $U_\alpha$ is a symmetric  $\alpha$-stable process, ${\mathop{\rm E}} {\rm e}^{iz U_\alpha(t)}=\exp(-t|z|^\alpha), z\in{\mathbb R}.$
We will also assume that $\eta$ either has a finite expectation  or $\eta$ belongs to the domain of attraction of $\beta$-stable law with $\beta\in(0,1),$ i.e., there is a sequence $(b_n)$ such that $\Big(\frac{S_\eta(n)}{b_n}\Big)$ converges in distribution to a $\beta$-stable law with the Lévy measure $m(\mathrm{d} y)=
(c_-{\mathds{1}}_{(-\infty,\, 0)}(y)+c_+{\mathds{1}}_{(0,\,\infty)}(y))|y|^{-(1+\beta)}{\rm d}y,\quad y\in\mathbb{R}$, where
   $c_\pm $ are non-negative constants, $c_++c_->0$.

\begin{theorem}\label{thm:lim_skew_Levy_walk}

\noindent  (a) 
  If ${\mathop{\rm E}} |\eta|<\infty$ or the distribution of $\eta$ belongs to the domain of attraction of a $\beta$-stable distribution with $\beta>\alpha-1$, 
then 
\begin{equation}\label{eq:FCLT2}
\Big(\frac{X(nt)}{a_n}\Big)_{t\geq 0}~\Rightarrow~ \big(U_\alpha(t)\big)_{t\geq 0},\quad n\to\infty,
\end{equation}

\noindent  (b) 
If the distribution of $\eta$ belongs to the domain of attraction of a $\beta$-stable distribution with $\beta<\alpha-1$, then 
sequence $\Big(\frac{X(nt)}{a_n}\Big)_{t\geq 0} $ converges in distribution
 in ${\mathcal D}$ as $n\to\infty$ to a Feller process $U_{\alpha,\beta}$ started from $0$ whose resolvent is equal to
\begin{equation}\label{eq:RES}
R^{U_{\alpha,\beta}}_\lambda f(x) =V^{U_\alpha}_\lambda f(x)+\frac{\int_{{\mathbb R}}V^{U_\alpha}_\lambda f \; \mathrm{d} m }{\lambda \int_{{\mathbb R}}V^{U_\alpha}_\lambda 1\; \mathrm{d} m }{\mathop{\rm E}}_x {\rm e}^{-\lambda \sigma(U_\alpha)},\quad \lambda>0,\ x\in{\mathbb R}, % \frac{\int_{{\mathbb R}}V_\lambda f(x)\eta^\ast(dx)}{\lambda \int_{{\mathbb R}}V_\lambda 1 (x)\eta^\ast(dx)} 
\end{equation}
\end{theorem}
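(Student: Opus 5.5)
The plan is to follow the resolvent approach announced in Section \ref{sec:2formulation}: identify the limit through the Laplace transform in time of the scaled chain, and combine it with tightness. Write $X_n(t)\coloneqq X([nt])/a_n$. By the Markov property at the first hitting time $\sigma_0(X)$ of the membrane and the renewal structure of successive returns to $0$, for bounded continuous $f$ we get
\[
R^{(n)}_\lambda f(x)\coloneqq {\mathop{\rm E}}_{x}\int_0^\infty {\rm e}^{-\lambda t} f(X_n(t))\,\mathrm{d} t
= V^{(n)}_\lambda f(x)+ {\mathop{\rm E}}_x {\rm e}^{-\lambda \sigma_0(X)/n}\,\frac{{\mathop{\rm E}}\, V^{(n)}_\lambda f(\eta/a_n)+\tfrac1n f(0)}{1-{\mathop{\rm E}}\, {\mathop{\rm E}}_{\eta/a_n}{\rm e}^{-\lambda (1+\sigma_0)/n}} .
\]
Here $V^{(n)}_\lambda$ is the resolvent of the scaled unperturbed walk $S_\xi(n\cdot)/a_n$ killed at $0$. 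This is the exact discrete analogue of \eqref{eq:RES}. The denominator equals $\lambda{\mathop{\rm E}} V^{(n)}_\lambda 1(\eta/a_n)+O(1/n)$, because $1-{\mathop{\rm E}}_y{\rm e}^{-\lambda\sigma_0/n}=\lambda V^{(n)}_\lambda 1(y)$ up to discretization. The whole proof therefore reduces to the asymptotics of ${\mathop{\rm E}}\, V^{(n)}_\lambda g(\eta/a_n)$ for $g=f$ and $g=1$, together with $V^{(n)}_\lambda f(x_n)\to V^{U_\alpha}_\lambda f(x)$ and ${\mathop{\rm E}}_{x_n}{\rm e}^{-\lambda\sigma_0/n}\to {\mathop{\rm E}}_x{\rm e}^{-\lambda\sigma(U_\alpha)}$ for $x_n\to x$.

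\textbf{Step 1: convergence of the killed objects.} These limits follow from \eqref{eq:conv} and Skorokhod's representation theorem (Theorem~\ref{thm:SkorokhodRepresentation}), because hitting times of $0$ converge a.s. For $\alpha>1$ the point $0$ is regular for $U_\alpha$ and is not approached by a jump. The additional input is a uniform estimate on compact sets: the lattice walk must hit the point $0$ itself, not only cross it, shortly after the limit does. For this we would use the local limit theorem and the potential kernel of $S_\xi$.

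\textbf{Step 2: behaviour near $0$.} The key quantitative input is that $h_n(y)\coloneqq V^{(n)}_\lambda 1(y)$ behaves like $V^{U_\alpha}_\lambda 1(y)\asymp |y|^{\alpha-1}$ as $y\to0$, uniformly in $n$ down to the lattice scale. At the lattice scale, $n\,h_n(k/a_n)\approx {\mathop{\rm E}}_k\sigma_0\wedge n$. This quantity is governed by the potential kernel $a(k)\sim C|k|^{\alpha-1}/\ell(|k|)$ of the recurrent walk, with the corresponding scaling $a(k)/a(a_n)$ times $n/a_n\cdot$const. This is the step I expect to be the main obstacle: it requires a renewal/potential-theoretic estimate of ${\mathop{\rm E}}\min(\sigma_0,n)$ for a walk started at $k$, uniform over $1\le |k|\le \varepsilon a_n$, together with the scaling of the stable killed resolvent.

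\textbf{Step 3: the two regimes.} Write ${\mathop{\rm E}} h_n(\eta/a_n)=\sum_k {\mathop{\rm P}}(\eta=k)h_n(k/a_n)$.

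In case (a), with ${\mathop{\rm E}}|\eta|<\infty$ or $\beta>\alpha-1$, the function $|k|^{\alpha-1}$ is $\eta$-integrable up to slowly varying corrections, or the tail sum is dominated by small $k$. Hence both ${\mathop{\rm E}} V^{(n)}_\lambda f(\eta/a_n)$ and ${\mathop{\rm E}} h_n(\eta/a_n)$ are asymptotically $c_n\,{\mathop{\rm E}}\,a(\eta)$ (or a comparable small-$k$ sum) times $f(0)$, resp.\ $1$. The fraction then tends to $f(0)/\lambda\cdot$ the ratio, which gives $V^{U_\alpha}_\lambda f+ f(0)\lambda^{-1}$-type terms. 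To see that this is $R^{U_\alpha}_\lambda f$, note that $V^{(n)}_\lambda f(y)\approx f(0)h_n(y)$ for small $y$. The limit is therefore $V^{U_\alpha}_\lambda f(x)+{\mathop{\rm E}}_x{\rm e}^{-\lambda\sigma}\,R^{U_\alpha}_\lambda f(0)$, which by the strong Markov property equals $R^{U_\alpha}_\lambda f(x)$.

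In case (b), $\beta<\alpha-1$, the sum is dominated by $|k|\asymp a_n$. By regular variation, ${\mathop{\rm P}}(\eta/a_n\in\mathrm{d} y)\approx \frac{\mathrm{d} m(y)}{n'}$ with a common normalization, and $\int (|y|^{\alpha-1}\wedge1)\,\mathrm{d} m<\infty$ precisely because $\beta<\alpha-1$. Numerator and denominator then share the same normalization, and their ratio tends to $\int V^{U_\alpha}_\lambda f\,\mathrm{d} m/(\lambda\int V^{U_\alpha}_\lambda 1\,\mathrm{d} m)$. The $1/n$ terms and the small-$|k|$ contributions are negligible; the latter uses the Step 2 bound and Karamata's theorem. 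Since $V^{U_\alpha}_\lambda 1(y)\to 1/\lambda$ at infinity, the large-$|y|$ region is controlled by the tail of $m$.

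\textbf{Step 4: conclusion.} Convergence of resolvents, uniformly on compacts in $x$, implies convergence of the semigroups in the sense of \cite[Theorem 17.25, 17.27]{Kallenberg}. This also shows that the right-hand side of \eqref{eq:RES} is a Feller resolvent; it satisfies the resolvent identity by construction. The cited theorem then gives convergence in ${\mathcal D}$ from $0$, with the Markov chain approximation handled by Trotter--Kurtz. In case (a) the limit resolvent equals $R^{U_\alpha}_\lambda$, which yields \eqref{eq:FCLT2}.
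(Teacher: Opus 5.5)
\paragraph{Part (b).} Your treatment follows essentially the paper's route. You decompose the resolvent at the first hitting time of $0$, use the holding-and-jumping formula \eqref{eq:4ResplventHoldJump} for the resolvent at $0$, and pass from ${\mathop{\rm P}}(\eta/a_n\in\mathrm{d}y)/{\mathop{\rm P}}(|\eta|>a_n)$ to $m$. The region near $0$ is controlled by the $|y|^{\alpha-1}$ bound and $\int(|y|^{\alpha-1}\wedge 1)\,\mathrm{d}m<\infty$.

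Three technical corrections are needed.
\begin{enumerate}
\item The convergence theorems you invoke need sup-norm convergence, uniformly over all of $a_n^{-1}\mathbb{Z}$ as in Lemma~\ref{thm:weak_convergenceEK_chains}, not only on compacts. The paper also Poissonizes via Lemma~\ref{lem:subordinating}, so that the scaled chain is a genuine Markov process with a resolvent.
\item Convergence of hitting times of the single point $0$ does not follow from Skorokhod's representation alone. This is why the paper uses the Fourier formulas \eqref{eq:generating_hitting1}--\eqref{eq:return_walk}.
\item That \eqref{eq:RES} is a Feller resolvent is an input from the literature, not a by-product of the limit.
\end{enumerate}

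\paragraph{The gap in part (a).} The approximation $V^{(n)}_\lambda f(y)\approx f(0)\,h_n(y)$ for small $y$ is false. From a fixed lattice point $k\neq0$, the return time to $0$ has tail index $1-1/\alpha<1$. So ${\mathop{\rm E}}_k(\sigma_0\wedge n)$ is carried by excursions of duration of order $n$, during which the scaled walk is at macroscopic distance from $0$. In fact $V^{(n)}_\lambda f(k/a_n)/h_n(k/a_n)\to\lambda R^{U_\alpha}_\lambda f(0)$ for every fixed $k\neq0$.

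Taken literally, your computation gives $V^{U_\alpha}_\lambda f(x)+\lambda^{-1}f(0)\,{\mathop{\rm E}}_x{\rm e}^{-\lambda\sigma}$. That is the resolvent of $U_\alpha$ absorbed at $0$. The switch to $R^{U_\alpha}_\lambda f(0)$ at the end of your case-(a) paragraph is unsupported: Step 2 controls $h_n$ but says nothing about the ratio $V^{(n)}_\lambda f/h_n$.

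\paragraph{How to repair it.} Work with the Poissonized unperturbed walk $\bar S_\xi^{(n)}$, and take $V^{(n)}_\lambda$ and $h_n$ for this walk. Decomposing at $\sigma_0$ gives exactly
\[
V^{(n)}_\lambda f(y)=R^{\bar S_\xi^{(n)}}_\lambda f(y)-R^{\bar S_\xi^{(n)}}_\lambda f(0)+\lambda\, h_n(y)\, R^{\bar S_\xi^{(n)}}_\lambda f(0).
\]
By translation invariance, $R^{\bar S_\xi^{(n)}}_\lambda f$ is $\lambda^{-1}\mathrm{Lip}(f)$-Lipschitz uniformly in $n$. Substituting into \eqref{eq:4ResplventHoldJump} yields
\[
\bigl|R^{X^{(n)}}_\lambda f(0)-R^{\bar S_\xi^{(n)}}_\lambda f(0)\bigr|\le C\,\frac{n^{-1}+{\mathop{\rm E}}(|\eta|/a_n\wedge1)}{{\mathop{\rm E}}\,h_n(\eta/a_n)}.
\]
\begin{itemize}
\item The denominator is at least a constant times ${\mathop{\rm P}}_{k_0}(\sigma_0>2n)$, for any $k_0\neq0$ with ${\mathop{\rm P}}(\eta=k_0)>0$. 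By Spitzer's ratio theorem and Belkin's lemma, this is regularly varying of index $-(1-1/\alpha)$.
\item The numerator has index $-1/\alpha$ if ${\mathop{\rm E}}|\eta|<\infty$, and $-\beta/\alpha$ otherwise.
\end{itemize}
So the bound vanishes exactly because $\alpha<2$, respectively $\beta>\alpha-1$. Since $X^{(n)}$ and $\bar S_\xi^{(n)}$ coincide up to $\sigma_0$, $\sup_x|R^{X^{(n)}}_\lambda f(x)-R^{\bar S_\xi^{(n)}}_\lambda f(x)|$ is bounded by the same quantity. This route to (a) therefore needs neither hitting-time convergence nor the uniform lattice estimate of your Step 2.

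\paragraph{The paper's route for (a).} The paper proves (a) differently, pathwise. From $Y=S_\xi(T^{\text{normal}})+S_\eta(T^{\text{critical}})$, it suffices that $S_{|\eta|}(T^{\text{critical}}(n))/a_n\to0$ in probability. This follows from $T^{\text{critical}}(n)=o_{\mathop{\rm P}}(n^{1-1/\alpha+\varepsilon})$. That in turn is obtained by stochastic domination from the lower bound ${\mathop{\rm P}}(\theta>N)\ge cN^{-(1-1/\alpha+\widetilde\varepsilon)}$ on return times.
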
 

The fact that \eqref{eq:RES} is indeed a resolvent can be found in \cite{IksanovPilipenko2021skewLevy} or \cite[\S 2.3]{IPMS_book}. We call this process a skew symmetric Lévy process with parameters $\alpha\in(1,2), \beta\in(0,\alpha-1).$
 It is known that $U_{\alpha,\beta}$ satisfies \eqref{eq:guessLimit}, where $L$ is the Blumenthal-Getoor local time, $U=U_\alpha$ and $V$ is $\beta$-stable process whose Lévy measure equals $C m(\mathrm{d} x)$ and $C=\frac{\beta\sin\frac{\pi(1+\beta)}{\alpha}}{(c_-+c_+)\Gamma(1-\beta) \cos\frac{\pi\beta}{2}\sin\frac{\pi}{\alpha}},$ see \cite[Corollary 2.3.1]{IPMS_book}. Moreover, the distributions of $U_\alpha$ and $U_{\alpha,\beta}$ up to the hitting 0 are identical.
 \begin{remark}
     The critical case $\beta = \alpha - 1$ was not addressed in \cite{IPMS_book, IksanovPilipenko2021skewLevy}. We have substantial arguments supporting the conjecture that if $\beta = \alpha - 1$, the limit is an unperturbed stable process $U_\alpha$. Unfortunately, since we currently have a rigorous proof for this fact only in certain specific cases, this issue is not considered in the present paper.
 \end{remark}
 \begin{proof}[Idea of the proof of Theorem \ref{thm:lim_skew_Levy_walk} (a)]
The alternative representation \eqref{eq:20_representation_PRW} takes the form:$$Y(n)=  S_\xi( T^{\text{normal}}(n))+  S_{\eta }(T^{\text{critical}}(n)).$$ Because $\xi$ belongs to the domain of attraction of an $\alpha$-stable law with $\alpha \in (1,2)$, the random variable $\xi$ is   zero mean, which leads to the convergence established in \eqref{eq:conv-stab-proc}. Consequently, the proof of \eqref{eq:FCLT2} reduces to showing that the "critical" term vanishes in the limit:$$\Big(\frac{S_\eta(T^{\text{critical}}(nt)  )}{a_n}\Big)_{t\geq 0} \Rightarrow \Big(0\Big)_{t\geq 0}, \quad n\to\infty.$$By applying the bound $\max_{0\leq k\leq N}|S_\eta(k)|\leq S_{|\eta|}(N)$, we see that it is sufficient to verify the following convergence in probability:\begin{equation}\label{eq:S_eta_small}\frac{S_{|\eta|}(T^{\text{critical}}(nT)  )}{a_n} \overset{{\mathop{\rm P}}}{\to} 0, \quad n\to\infty,\end{equation}for any $T>0.$ Finally, noting that $(a_n)$ is regularly varying at infinity with parameter $1/\alpha,$ we may assume $T=1$ without loss of generality.
 
      If $\eta$ belongs to domain of attraction of $\beta$-stable law, then   
     \begin{equation}
          \forall \delta>0\quad \frac{S_{|\eta|}(n )}{n^{\frac{1}{\beta}+\delta}} \Rightarrow 0, \quad n\to\infty.
          \label{eq:|eta|small}
     \end{equation}
For any $\varepsilon_1, \varepsilon_2>0$ and sufficiently large $n$ we have
%rata
% \begin{equation}
%     \begin{aligned}
%         \frac{S_{|\eta|}(T^{\text{critical}}(n )  )}{a_n}\leq \frac{S_{|\eta|}(T^{\text{critical}}(n )  )}{n^{\frac{1}{\alpha}-\varepsilon_2}}
%   \leq \frac{S_{|\eta|}(T^{\text{critical}}(n )  )}{n^{\frac{1}{\alpha}-\varepsilon_2}} {\mathds{1}}_{T^{\text{critical}}(n)\geq n^{\frac{\beta(1-\varepsilon_1)}{\alpha}}}+
%   \frac{S_{|\eta|}( n^{\frac{\beta(1-\varepsilon_1)}{\alpha}})}{n^{\frac{1}{\alpha}-\varepsilon_2}} {\mathds{1}}_{T^{\text{critical}}(n)\leq n^{\frac{\beta(1-\varepsilon_1)}{\alpha}}} \\
%   \leq \frac{S_{|\eta|}(T^{\text{critical}}(n )  )}{n^{\frac{1}{\alpha}-\varepsilon_2}} {\mathds{1}}_{T^{\text{critical}}(n)\geq  n^{\frac{\beta(1-\varepsilon_1)}{\alpha}}}+  \frac{S_{|\eta|}( n^{\frac{\beta(1-\varepsilon_1)}{\alpha}})}{ n^{\frac{ 1-\frac{\varepsilon_1}{2}}{\alpha} }} \frac{ n^{\frac{ 1-\frac{\varepsilon_1}{2}}{\alpha} } }{n^{\frac{1}{\alpha}-\varepsilon_2}} 
%     \end{aligned}
% \end{equation}

\begin{equation}
\label{eq:S_eta_bounds}
\begin{aligned}
\frac{S_{|\eta|}(T^{\text{critical}}(n))}{a_n}
&\leq \frac{S_{|\eta|}(T^{\text{critical}}(n))}{n^{\frac{1}{\alpha}-\varepsilon_2}} \\
&\leq \frac{S_{|\eta|}(T^{\text{critical}}(n))}{n^{\frac{1}{\alpha}-\varepsilon_2}} {\mathds{1}}_{\left\{T^{\text{critical}}(n) \geq n^{\frac{\beta(1-\varepsilon_1)}{\alpha}}\right\}} \\
&\quad + \frac{S_{|\eta|}(n^{\frac{\beta(1-\varepsilon_1)}{\alpha}})}{n^{\frac{1}{\alpha}-\varepsilon_2}} {\mathds{1}}_{\left\{T^{\text{critical}}(n) \leq n^{\frac{\beta(1-\varepsilon_1)}{\alpha}}\right\}} \\
&\leq \frac{S_{|\eta|}(T^{\text{critical}}(n))}{n^{\frac{1}{\alpha}-\varepsilon_2}} {\mathds{1}}_{\left\{T^{\text{critical}}(n) \geq n^{\frac{\beta(1-\varepsilon_1)}{\alpha}}\right\}} \\
&\quad + \frac{S_{|\eta|}(n^{\frac{\beta(1-\varepsilon_1)}{\alpha}})}{n^{\frac{1-\frac{\varepsilon_1}{2}}{\alpha}}} \frac{n^{\frac{1-\frac{\varepsilon_1}{2}}{\alpha}}}{n^{\frac{1}{\alpha}-\varepsilon_2}}
\end{aligned}
\end{equation}

If $\varepsilon_1 > 2\varepsilon_2$, the second summand on the right-hand side converges to zero in probability. Hence, \eqref{eq:S_eta_small} holds if we can find $\varepsilon_1$ such that:
\[\frac{T^{\text{critical}}(n)}{ n^{\frac{\beta(1-\varepsilon_1)}{\alpha}}}\overset{{\mathop{\rm P}}}{\to}0,\quad n\to\infty\]
for some $\varepsilon_1>0.$ Given that $\beta > \alpha - 1$, it is sufficient to prove the existence of $\varepsilon > 0$ such that:
\[
\frac{T^{\text{critical}}(n)}{ n^{1-\frac{1}{\alpha}+\varepsilon}}\overset{{\mathop{\rm P}}}{\to}0,\quad n\to\infty.
\]
Let $\tau_n$ be $n$th arrival of $X$ to 0. Then $\theta_n\coloneqq \tau_{n+1}-\tau_n, n\in{\mathbb N},$ are independent identically distributed random variables.
For any $C>0$
     \[
     {\mathop{\rm P}}(\frac{T^{\text{critical}}(n)}{ n^{1-\frac{1}{\alpha}+\varepsilon}}>C)\leq {\mathop{\rm P}}(S_{\theta}([C n^{1-\frac{1}{\alpha}+\varepsilon}])<n).
     \]
     For any $x\in{\mathbb Z}$ and $N\geq 1$
     \[
     {\mathop{\rm P}}(\theta>N)\geq {\mathop{\rm P}}(\eta=x){\mathop{\rm P}}_x(\sigma_0(S_\xi)\geq N)= {\mathop{\rm P}}(\eta=x){\mathop{\rm P}}_0(\sigma_0(S_\xi)\geq N)\frac{{\mathop{\rm P}}_x(\sigma_0(S_\xi)\geq N)}{{\mathop{\rm P}}_0(\sigma_0(S_\xi)\geq N)},
     \]
where $\sigma_0(S_\xi)\coloneqq \inf\{k\in{\mathbb N}  \ : \ S_\xi(k)=0\}.$
     
 By Theorem T1 on p.~378 in \cite{Spitzer}, 
\begin{equation}\label{eq:ratio}
\lim_{N\to\infty} \frac{{\mathop{\rm P}}_x(\sigma_0(S_\xi)\geq N)}{{\mathop{\rm P}}_0(\sigma_0(S_\xi)\geq N)}=g(x)\in [0,\infty),
\end{equation}
where $g$ is the potential kernel of $S_\xi$. By Theorem P2 on p.~361 of the same reference, $g(x)>0$ for all $x\in\mathbb{Z}\backslash\{0\}$. According to Lemma 2.1 in \cite{Belkin:1970}, ${\mathop{\rm P}}(\sigma_0(S_\xi)>N)~\sim~ N^{-(1-1/\alpha)}L(N )$ as $N\to\infty$ for some $L$ slowly varying at $\infty$. Therefore for any $\widetilde \varepsilon>0$ there is $c>0$ such that
\begin{equation}
     {\mathop{\rm P}}(\theta>N)\geq   \frac{c}{ N^{1-1/\alpha+\widetilde \varepsilon}},\quad N\geq 1.
     \label{eq: long return}
\end{equation}
 It follows that $\theta$ stochastically dominates a natural-valued random variable $\widetilde{\theta}$ such that ${\mathop{\rm P}}(\widetilde{\theta} > N) = c N^{-(1-1/\alpha+\widetilde{\varepsilon})}$ for $N \geq 1$. Consequently, we obtain the bound:$${\mathop{\rm P}}(S_{\theta}([C n^{1-\frac{1}{\alpha}+\varepsilon}]) < n) \leq {\mathop{\rm P}}(S_{\widetilde{\theta}}([C n^{1-\frac{1}{\alpha}+\varepsilon}]) < n), \quad n \geq 1.$$
 The random variable $\widetilde{\theta}$ belongs to the domain of attraction of a positive stable distribution with parameter $(1-1/\alpha+\widetilde{\varepsilon})^{-1}$. Thus, for any $\varepsilon > \widetilde{\varepsilon}$:
 $$\frac{S_{\widetilde{\theta}}([C n^{1-\frac{1}{\alpha}+\varepsilon}])}{n} \overset{{\mathop{\rm P}}}{\to} +\infty, \quad n \to \infty,$$and consequently,$$\limsup_{n\to\infty} {\mathop{\rm P}}(S_{\theta}([C n^{1-\frac{1}{\alpha}+\varepsilon}]) < n) \leq \lim_{n\to\infty} {\mathop{\rm P}}(S_{\widetilde{\theta}}([C n^{1-\frac{1}{\alpha}+\varepsilon}]) < n) = 0.$$ 
 Since $\widetilde \varepsilon>0$ was arbitrary, this completes the proof of Theorem \ref{thm:lim_skew_Levy_walk} (a).    
\end{proof}

\begin{proof}[Idea of the proof of Theorem \ref{thm:lim_skew_Levy_walk} (b)] 

It follows from Lemma \ref{lem:subordinating} that it suffices to prove convergence of the scalings of continuous time Markov chain $\bar X(t)\coloneqq   X(N(t)), t\geq 0,  $   to the skew stable Lévy process, where $N$ is a Poisson process with intensity 1 that is independent of $X.$  

It is well known that weak convergence of Feller processes is equivalent to a strong convergence of their semigroups or resolvents, see Trotter-Sova-Kurtz-Mackevi\v{c}ius theorem \cite{Kallenberg}. In our case  $\frac{  X(N(nt))}{a_n}$ take values in $a_n^{-1}\, {\mathbb Z}$ and $U_{\alpha,\beta}$ take values in ${\mathbb R}$. Nevertheless there are generalizations of the corresponding result, e.g., Theorem \cite[Theorem 2.11, page 172]{Ethier+Kurtz:1986}. We take the formulation of the corresponding generalization 
from \cite{Dong+Iksanov+Pilipenko:2024+}.
\begin{lemma}\label{thm:weak_convergenceEK_chains}
 Let $X^{(0)}$ be a Feller process on ${\mathbb R}$ and, for each $n\in{\mathbb N}$, $X^{(n)}$ a time-homogeneous Markov process on $G_n$, a subset of ${\mathbb R}$, with paths in ${\mathcal D}$, where $G_1$, $G_2,\ldots$ are possibly different. 
For each $n\in{\mathbb N}_0$ and $\lambda>0$, denote by   $R^{(n)}_\lambda$  the resolvent of $X^{(n)}$. 

Assume  that the random variables $X^{(n)} %_n
(0)$ converge in distribution to $X^{(0)} %_0
(0)$ as $n\to\infty$, and  for each $f\in C_0({\mathbb R})$ and each $\lambda>0$,
\begin{equation}
    \label{eq:conv_resolvents_restr}
\lim_{n\to\infty}\sup_{x\in G_n}\,|R^{(n)}_\lambda f(x)-R^{(0)}_\lambda f(x)|=0.
 \end{equation}

\noindent Then
\[
X^{(n)} %_n
~\Rightarrow~ X^{(0)},\quad n\to\infty
\]
in ${\mathcal D}$.
\end{lemma}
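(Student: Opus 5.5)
We plan to reduce the statement to the classical Trotter--Kurtz-type result for Feller processes, in the form of \cite[Theorem 2.11, p.~172]{Ethier+Kurtz:1986} (or \cite[Theorem 17.25]{Kallenberg}), which allows state spaces $G_n$ that vary with $n$. The first step is to embed everything into a common Banach space. Let $L=C_0({\mathbb R})$ with the sup norm, and for each $n$ let $L_n=B(G_n)$ be the bounded measurable functions on $G_n$. The maps $\pi_n f\coloneqq f|_{G_n}$ satisfy $\|\pi_n f\|\le\|f\|$. The semigroup $T^{(n)}_t f(x)={\mathop{\rm E}}_x f(X^{(n)}(t))$ is a contraction semigroup on $L_n$, and $R^{(n)}_\lambda$ is its resolvent; it need not be strongly continuous on all of $L_n$, so we restrict to the closed subspace where it is. Hypothesis \eqref{eq:conv_resolvents_restr} says precisely that $\|R^{(n)}_\lambda\pi_n f-\pi_n R^{(0)}_\lambda f\|_{L_n}\to0$ for every $f\in L$ and $\lambda>0$. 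In the Ethier--Kurtz framework this is one of the equivalent conditions for semigroup convergence. It holds at a single $\lambda$ (in fact at all $\lambda$), and the range of $R^{(0)}_\lambda$ is the domain of the Feller generator $A_0$, which is dense in $L$.

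The second step is to obtain semigroup convergence, $\sup_{0\le t\le T}\|T^{(n)}_t\pi_n f-\pi_n T^{(0)}_tf\|\to0$. We use the Trotter--Kato argument: the resolvent convergence yields, for $g=R^{(0)}_\lambda f\in D(A_0)$, the functions $g_n\coloneqq R^{(n)}_\lambda\pi_n f$. These lie in the domain of the (extended) generator $A_n$ of $X^{(n)}$, and they satisfy $g_n-\pi_n g\to0$ and $A_ng_n=\lambda g_n-\pi_n f\to\pi_n(\lambda g-f)=\pi_nA_0g$. Thus the generators converge on a core, and \cite[Theorem 1.6.1]{Ethier+Kurtz:1986} gives uniform convergence of the semigroups on compact time intervals.

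The third step is to pass from semigroups to processes. Convergence of the initial laws together with semigroup convergence gives convergence of the finite-dimensional distributions: for continuity points and $f_1,\dots,f_k\in C_0$, iterate the Markov property and use the uniform estimate above at each stage. This is where the supremum over $x\in G_n$ in \eqref{eq:conv_resolvents_restr}, rather than mere pointwise convergence, is essential. For tightness in ${\mathcal D}$ we use the Aldous/Kurtz criterion via the martingales $g_n(X^{(n)}(t))-\int_0^tA_ng_n(X^{(n)}(s))\,\mathrm{d}s$, as in \cite[Theorem 4.2.11 and Corollary 4.8.9]{Ethier+Kurtz:1986}. Because $A_ng_n$ are uniformly bounded and approximate $A_0g$ uniformly, the criterion yields relative compactness for each $g$ in a dense set. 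We also need compact containment; since the limit is Feller on ${\mathbb R}$ and $C_0$ functions vanish at infinity, we handle it by working on the one-point compactification $\bar{\mathbb R}$ and then noting that the limit does not reach $\infty$.

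The main obstacle is this last tightness/compact-containment step, together with the technicality that $T^{(n)}$ need not be strongly continuous on $B(G_n)$. We would try first to invoke \cite[Theorem 4.2.11]{Ethier+Kurtz:1986} directly. That theorem is stated exactly for $X^{(n)}$ with varying state spaces and the maps $\pi_n$, and it requires only generator convergence of the above type plus compact containment. This reduces the proof to checking its hypotheses, which the previous steps provide.
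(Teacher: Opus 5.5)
This is essentially the paper's route. The paper gives no separate argument: it quotes the lemma from Dong--Iksanov--Pilipenko as a variant of \cite[Theorem 2.11, p.~172]{Ethier+Kurtz:1986}. Your key observation is exactly what reduces it to that theorem: for $g=R^{(0)}_\lambda h$ in the core $D(A_0)$, the pairs $\big(R^{(n)}_\lambda(h|_{G_n}),\ \lambda R^{(n)}_\lambda(h|_{G_n})-h|_{G_n}\big)$ lie in the full generator of $X^{(n)}$ and are uniformly close on $G_n$ to $(g,A_0g)$.

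You can drop your Steps 2--3 and cite directly the continuous-time, full-generator form of that Ethier--Kurtz result, which needs neither strong continuity on $B(G_n)$ nor a separate compact-containment hypothesis. This is also necessary: restricting to the strong-continuity subspace does not work verbatim, since $f|_{G_n}$ need not belong to it.
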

We will apply   Lemma \ref{thm:weak_convergenceEK_chains} for $X^{(n)}(t)\coloneqq \frac{  X(N(nt))}{a_n}, \quad G_n\coloneqq (a_n)^{-1}{\mathbb Z},\quad X^{(0)}(t)\coloneqq U_{\alpha,\beta}(t).$

Next, we describe the main heuristic steps of the proof. These steps are designed to provide an intuitive understanding of the underlying probabilistic mechanisms; for a full treatment of the rigorous technical details, the reader is referred to \cite{Dong+Iksanov+Pilipenko:2024+}  or \cite[\S 3.4]{IPMS_book}.

\textit{Step 1.} For any strong Markov process $Y$ we have 
\begin{equation}
    \label{eq:resolvent_general}
R^Y_\lambda f(x)=  V^Y_\lambda f(x)+{\mathop{\rm E}}_x {\rm e}^{-\lambda \sigma_{0}} R^Y_\lambda f(0),\quad x\in{\mathbb R}, 
\end{equation}
where $\sigma_0=\sigma_0(Y)\coloneqq \inf\{t\geq 0 : Y(t)=0\}.$ 
The proof of \eqref{eq:resolvent_general} is a standard argument from Markov processes theory.  

Hence, the proof of the theorem reduces to verifying
\begin{equation}
    \label{eq:conv_res_need0}
    \lim_{n\to\infty}\sup_{x\in G_n} |V_\lambda^{\bar X^{(n)}}f(x)-V_\lambda^{U_\alpha}f(x)|=0, f\in C_0({\mathbb R}),
\end{equation}
\begin{equation}
    \label{eq:conv_res_need00}
    \lim_{n\to\infty}\sup_{x\in G_n} |{\mathop{\rm E}}_x {\rm e}^{-\lambda \sigma_{0}(\bar X^{(n)})}- {\mathop{\rm E}}_x{\rm e}^{-\lambda \sigma_{0}(U_\alpha)}|=0
\end{equation}
and
\begin{equation}
\label{eq:conv_res_need000}
    \lim_{n\to\infty} \big|R_\lambda^{\bar X^{(n)}}f(0) -\frac{\int_{{\mathbb R}}V^{U_\alpha}_\lambda f \; \mathrm{d} m }{\lambda \int_{{\mathbb R}}V^{U_\alpha}_\lambda 1 \; \mathrm{d} m }\big|=0, f\in C_0({\mathbb R}),
\end{equation}

\textit{Step 2.}  It follows from formula \eqref{eq:resolvent_general}  that
\[
V^Y_\lambda f(x) = R^Y_\lambda f(x)- {\mathop{\rm E}}_x {\rm e}^{-\lambda \sigma_{0}} R^Y_\lambda f(0),\quad x\in{\mathbb R}. 
\]
Hence, to calculate $V^Y_\lambda f$ it suffices to know $R^Y_\lambda f$ and ${\mathop{\rm E}}_x {\rm e}^{-\lambda \sigma_{0}}.$
It also may be easily seen from the definition of  $V^Y_\lambda$ that
\[
V^Y_\lambda 1(x) = \lambda^{-1}(1- {\mathop{\rm E}}_x {\rm e}^{-\lambda \sigma_{0}}),\quad x\in{\mathbb R}. 
\]

Denote $\bar S_\xi^{(n)}(t)\coloneqq \frac{S_\xi(N(nt))}{a_n},$ where $S_\xi$ is independent from the Poisson process $N.$ 
Notice that for any $x\in G_n$:
\[
V_\lambda^{\bar S_\xi^{(n)}}f(x)=V_\lambda^{X^{(n)}}f(x), \quad {\mathop{\rm E}}_x {\rm e}^{-\lambda \sigma_{0}(\bar S_\xi^{(n)})}={\mathop{\rm E}}_x {\rm e}^{-\lambda \sigma_{0}(X^{(n)})}.
\]
So, to prove \eqref{eq:conv_res_need0}, \eqref{eq:conv_res_need00} it is sufficient to prove that
\begin{equation}
\begin{aligned}
        \label{eq:conv_res_need1}
    &\lim_{n\to\infty}\sup_{x\in G_n} |R_\lambda^{\bar S_\xi^{(n)}}f(x)-R_\lambda^{U_\alpha}f(x)|=\\
    &\lim_{n\to\infty}\sup_{x\in G_n} |{\mathop{\rm E}}\int_0^\infty {\rm e}^{-\lambda t} \Big(f(x+\bar S_\xi^{(n)}(t))-f(x+{U_\alpha}(t)\Big) \mathrm{d} t|=0,\quad f\in C_0({\mathbb R}),
\end{aligned}
\end{equation}
and
\begin{equation}
    \label{eq:conv_res_need2}
    \lim_{n\to\infty}\sup_{x\in G_n} |{\mathop{\rm E}}_x {\rm e}^{-\lambda \sigma_{0}(\bar S_\xi^{(n)})}- {\mathop{\rm E}}_x {\rm e}^{-\lambda \sigma_{0} (U_\alpha)}|=0.
\end{equation}

Convergence \eqref{eq:conv_res_need1} follows from uniform continuity of $f$, convergence \eqref{eq:conv}, and Skorokhod's representation theorem.

The convergence in \eqref{eq:conv_res_need2} is a significantly more delicate matter than that in \eqref{eq:conv_res_need1}. This difficulty arises from the fact that the weak convergence of stochastic processes does not, in general, imply the convergence of their respective hitting times.

To circumvent this, one can utilize the explicit formulas for the Laplace transforms of hitting times for both Lévy processes and random walks:
  \begin{equation}\label{eq:generating_hitting1}
{\mathop{\rm E}}_x {\rm e}^{-\lambda \sigma_{0} (\bar S_\xi^{(n)})}= \frac{ u^{(n)}_\lambda(-x)}{  u^{(n)}_\lambda (0)}, 
\end{equation} 
where
 \[
 u^{(n)}_\lambda(x) =\int_0^\infty {\rm e}^{-\lambda t}{\mathop{\rm P}}(\bar S_\xi^{(n)}(t)=-x){\rm d}t.
 \]
and 
\begin{equation}\label{eq:Laplica_hitting_Levy}
{\mathop{\rm E}}_x {\rm e}^{-\lambda \sigma(U_\alpha)}= \frac{u_\lambda(-x)}{u_\lambda(0)},\quad x\in\mathbb{R}, \ \lambda>0,
\end{equation}
 where  $u_\lambda(x,y)=u_\lambda(y-x)$, $x,y\in\mathbb{R}$  is the density of the resolvent kernel of $U_\alpha$.
 
Formula  \eqref{eq:generating_hitting1} can be proved by elementary methods, while its continuous counterpart \eqref{eq:Laplica_hitting_Levy} is a deep result from potential theory of Lévy processes, see Corollary 18 on p.~64 in \cite{Bertoin:1996}.
 
 It is known  that
\begin{equation}\label{eq:resolvent_stable}
u_\lambda(x)=\frac{1}{2 \pi} \int_{-\infty}^\infty\frac{\cos(x\theta)}{\lambda+\theta^\alpha} {\rm d}\theta,\quad x\in\mathbb{R},
\end{equation}
and
\begin{equation}\label{eq:return_walk}
u^{(n)}_\lambda(x)=\frac{1}{ 2\pi (\lambda+n) }
\int_{-\pi a_n}^{\pi a_n} \frac{{\rm e}^{{\rm i}\theta x}}{\lambda+n(1-{\mathop{\rm E}}{\rm e}^{i\xi \theta/a_n})}{\rm d}\theta,
\end{equation}
e.g.,  \cite[Theorem 19(iii) ]{Bertoin:1996} for the general Lévy process and \cite[Corollary 3.4.1.]{IPMS_book} for  random walks.

 It follows from our assumption  on $\xi$ that $\lim_{n\to\infty}(n(1-{\mathop{\rm E}}{\rm e}^{i\xi \theta/a_n}))=|\theta|^\alpha$.  Therefore, convergence \eqref{eq:conv_res_need2} appears plausible.   However, transition to  the uniform convergence requires considerable  analytical effort.

\textit{Step 3.} To calculate $R_\lambda^{\bar X^{(n)}}f(0)$, note that $\bar X^{(n)}$ behaves like $\bar S_\xi^{(n)}$ until it reaches 0, then remains at 0 for an exponentially distributed random time with parameter $n$, then makes a jump with distribution $\eta/a_n,$ then moves like $\bar S_\xi^{(n)}$ until it reaches 0, and so on. 
The next  formula follows from (2.2)  on p.~137 in \cite{Blumenthal2012excursions} for `holding and jumping' Markov processes:
\begin{equation}
    \label{eq:4ResplventHoldJump}
R_\lambda^{\bar X^{(n)}}f(0)= 
\lambda^{-1}\, \frac{n^{-1}f(0)+{\mathop{\rm E}} [(V^{\bar S_\xi^{(n)}}_\lambda f) (\eta/a_n)]}{n^{-1}+{\mathop{\rm E}}[(V^{\bar S_\xi^{(n)}}_\lambda 1) (\eta/a_n)]}=\lambda^{-1}\, \frac{n^{-1}f(0)+\int_{{\mathbb R}} (V^{\bar S_\xi^{(n)}}_\lambda f) (y) \mathrm{d} F_{\eta/a_n}(y)}{n^{-1}+\int_{{\mathbb R}} (V^{\bar S_\xi^{(n)}}_\lambda 1) (y) \mathrm{d} F_{\eta/a_n}(y)}.
\end{equation}

Since  $\Big(\frac{S_\eta(n)}{b_n}\Big)$ converges in distribution to a $\beta$-stable law with the Lévy measure $m(\mathrm{d} y)=
(c_-{\mathds{1}}_{(-\infty,\, 0)}(y)+c_+{\mathds{1}}_{(0,\,\infty)}(y))|y|^{-(1+\beta)}{\rm d}y,\quad y\in\mathbb{R}$, we have convergence
\begin{equation}
    \label{eq:1691}
    \lim_{n\to\infty}\frac{{\mathop{\rm P}}(\eta/a_n\in[x_1, x_2])}{{\mathop{\rm P}}(|\eta|>a_n)}=m([x_1,x_2])
\end{equation}
for any  interval $[x_1,x_2]$ that does not contain 0. Hence, it is natural to expect that \eqref{eq:1691} and \eqref{eq:conv_res_need00} imply that 
\begin{equation}
    \label{eq:1698}
    \lim_{n\to\infty}\frac{\int_{{\mathbb R}}  V^{\bar S_\xi^{(n)}}_\lambda f  (y) \mathrm{d} F_{\eta/a_n}(y)}{{\mathop{\rm P}}(|\eta|>a_n)}=\int_{\mathbb R}  V^{U_\alpha}_\lambda f  (y) m(\mathrm{d} y).
\end{equation} 
In fact, this is true for any bounded and uniformly continuous function $f$. This fact requires careful analytical justification, since the measure $m$ is unbounded in any neighborhood of 0. Furthermore, to prove that the integral on the right-hand side is well-defined, we need to know some asymptotic bounds for $V^{U_\alpha}_\lambda f$ in the neighborhood of 0.

Recall that $a_n=o(n), n\to \infty$ since $(a_n)$ is a regularly varying sequence with parameter $\alpha^{-1}\in(1/2,1).$  Hence, 
\eqref{eq:1698} and \eqref{eq:4ResplventHoldJump} imply \eqref{eq:conv_res_need000}. This completes the proof of Theorem \ref{thm:lim_skew_Levy_walk} (b).

 \end{proof}

%%%%%%%%%%%%%%%%%%%%%%%%%%%%%%%%%%%%%%%%%%%%%%%%%%%%%%%%%%

\textbf{Discussion and possible generalizations.}

\begin{enumerate}
    \item The only property of $\eta$ required for the proof of Theorem \ref{thm:lim_skew_Levy_walk} (a) is the limit established in \eqref{eq:|eta|small}. This property is inherited if $|\eta|$ is stochastically dominated by a random variable already satisfying this condition. For instance, the result holds if there exists a positive random variable $\widetilde{\eta}$ such that the tail ${\mathop{\rm P}}(\widetilde{\eta} > x)$ is regularly varying at $+\infty$ with index $\beta$, and the jump sizes satisfy the stochastic bound:
    $${\mathop{\rm P}}(|\eta| > x) \leq {\mathop{\rm P}}(\widetilde{\eta} > x), \quad x \geq 1.$$

    \item Consider a sequence of random walks $(X_n(k))_{k\geq 0}$ with transition probabilities
    $$ p^n_{x,y}=\begin{cases}
        {\mathop{\rm P}}(\xi_n=y-x), & x\neq 0;\\
        {\mathop{\rm P}}(\eta_n =y), & x=0, 
    \end{cases}$$
    where $\xi_n$ and $\eta_n$ are integer-valued random variables. Assume that there are sequences $(a_n)$ and $(b_n)$ such that $\big(\frac{S_{\xi_n}(nt)}{a_n}\big)_{t\geq 0}$ and $\big(\frac{S_{\eta_n}(nt)}{b_n}\big)_{t\geq 0}$ converge in distribution to Lévy processes $(U(t) )_{t\geq 0}$ and $(V(t))_{t\geq 0}$, where $V$ is a pure jump Lévy process with an infinite Lévy measure. 
    
    To what extent can Theorem \ref{thm:lim_skew_Levy_walk} be generalized to this case? 
    
    To establish an analogue of Theorem \ref{thm:lim_skew_Levy_walk} (a), it is sufficient to demonstrate that the number of returns to the origin for the process $X_n(k)$ (where $0 \leq k \leq nT$) is $o(b_n)$ as $n \to \infty$. The explicit formulas \eqref{eq:generating_hitting1} and \eqref{eq:return_walk} are not restricted to the specific laws previously discussed; they are applicable to any random walk.

    The generalization of Theorem \ref{thm:lim_skew_Levy_walk} (b) is significantly more complex and far from trivial. A primary requirement is that the origin must not be a polar set for the limit process $U$ (see, e.g., \cite[\S II.5]{Bertoin:1996} or \cite[\S 43]{Sato}). Under this condition, the integrals in \eqref{eq:RES}---with $U_\alpha$ replaced by $U$---must converge. If convergence holds, the analogue of \eqref{eq:RES} defines the resolvent of a Feller process, see \cite[Theorem 2.8, Chapter V]{Blumenthal2012excursions}. The fundamental formulas used in our proof, such as \eqref{eq:generating_hitting1}, \eqref{eq:return_walk}, and \eqref{eq:4ResplventHoldJump}, are inherently universal. While formal computations suggest the convergence of \eqref{eq:conv_res_need2} or \eqref{eq:1698}, establishing a rigorous proof remains a formidable task. Even in the specific case treated in Theorem \ref{thm:lim_skew_Levy_walk}, the proof was remarkably involved. The remaining challenges are entirely analytical, centering on the validity of passing to the limit under the integral sign and establishing the uniform convergence of the relevant integrals. While it is evident that a broad class of limit theorems could be derived using this framework, such developments lie beyond the scope of the present article.

    \item Assume that the random walk $S_\xi$ is perturbed within a finite set $A = \{-m, \dots, m\}$. In this setting, Theorem \ref{thm:lim_skew_Levy_walk} (a) can be generalized provided it can be shown that for any $\widetilde{\varepsilon} > 0$, there exist constants $c > 0$ and $x \in \mathbb{Z} \setminus A$ such that:
    $${\mathop{\rm P}}_x(\sigma_A(S_\xi) > N) \geq \frac{c}{N^{1-1/\alpha+\widetilde{\varepsilon}}}, \quad N \geq 1,$$
    cf. \eqref{eq: long return}, where $\sigma_A(S_\xi) = \inf\{k \in \mathbb{N} : S_\xi(k) \in A\}$ denotes the first hitting time of the set $A$. According to the Kesten--Spitzer ratio theorem (see \cite[Theorem 4a]{KestenSpitzer1963}), the following limit exists:
    $$\lim_{n\to\infty} \frac{{\mathop{\rm P}}_x(\sigma_A(S_\xi) > n)}{{\mathop{\rm P}}_0(\sigma_0(S_\xi) > n)} = g_A(x) \in [0, \infty).$$
    Unfortunately, we know of no simple proof that $g_A(x)>0$; for further discussion and related reasoning on this topic, we refer the reader to \S 3.5.1 of \cite{IPMS_book}.

    We conjecture that Theorem \ref{thm:lim_skew_Levy_walk} (b) can also be generalized to the case $A=\{-m,\dots,m\}$ if $\beta\in(0,\alpha-1)$ and 
    \[
    {\mathop{\rm P}}_i(\pm X(1)>n)\sim c_\pm^i n^{-\beta} l(n), \quad n\to+\infty
    \]
    for each $i \in A$, where $l$ is a function slowly varying at infinity. Apparently, the resolvent of the limit process has the form \eqref{eq:RES}, where
    \[
    m(\mathrm{d} y) = \sum_{i\in A}\pi_i m_i(\mathrm{d} y)= \sum_{i\in A}\pi_i (c_-^i{\mathds{1}}_{y<0 }+c_+^i{\mathds{1}}_{y>0})|y|^{-(1+\beta)}{\rm d}y,
    \]
    and $(\pi_i)_{i\in A}$ is a stationary distribution of a certain ``entrance Markov chain'' in $A$. In this setup, many technical difficulties arise in all the formulas used in the proof, since it is necessary to take into account which point $X$ jumps to when entering $A$.

    \item Are there other methods of proof for results like Theorem \ref{thm:lim_skew_Levy_walk} (b)? 

    The answer is yes, though one should not expect such proofs to be significantly shorter or less technical. For instance, a perturbed random walk can be viewed as a function of the unperturbed walk and its perturbations (see constructions in Section \ref{sec:spider}). One might then utilize the representation of the skew stable Lévy process through Itô's excursion theory and attempt to establish convergence by constructing the processes on a single probability space. Incidentally, the stochastic differential equation  for the skew stable Lévy process was previously derived using methods from Itô's theory \cite{IksanovPilipenko2021skewLevy, IPMS_book}. However, Itô's excursion theory is quite complex, and we see no clear analytical shortcuts available in this direction.

    Currently, the martingale problem characterization for the skew stable Lévy process remains an open question. Similarly, the answer remains unknown even in the case $\alpha=2$, where $U$ is a Brownian motion but the perturbation $\eta$ belongs to the domain of attraction of a stable law with $\beta \in (0, 1)$. A particularly interesting direction for future research would be to derive the transition densities of the skew stable Lévy process as the fundamental solution to a certain partial differential equation with boundary conditions at the origin and to extend the corresponding theory to the multidimensional case, where general Feller-Wentzell boundary conditions are imposed on a manifold.
\end{enumerate}

 %rata
{\bf Acknowledgments.}  
   The author  thanks  the Swiss National Science Foundation for partial support  of the paper (grants IZRIZ0\_226875, 200020\_214819, 200020\_200400, and 200020\_192129). Support from the EPSRC (grant EP/Z000580/1) and the Isaac Newton Institute for Mathematical Sciences, Cambridge, is gratefully acknowledged, particularly for the hospitality during the programme Stochastic systems for anomalous diffusion.   I express my sincere gratitude to Ilya Pavlyukevich for helpful discussions and valuable comments.% Finally, I am grateful to the anonymous reviewer for their comments and suggestions, which helped to improve the  paper.

\end{document}